\newtheorem{theorem}{Theorem}[section]
\newtheorem{prop}[theorem]{Proposition}
\newtheorem{lemma}[theorem]{Lemma}
\newtheorem{cor}[theorem]{Corollary}
\newtheorem{problem}[theorem]{Problem}
\newtheorem{conj}[theorem]{Conjecture}
\theoremstyle{definition}
\newtheorem{defn}[theorem]{Definition}
\newtheorem{remark}[theorem]{Remark}
\newtheorem{example}[theorem]{Example}
\numberwithin{equation}{theorem}
\newcommand{\frakm}{\mathfrak{m}}
\newcommand{\calE}{\mathcal{E}}
\newcommand{\calG}{\mathcal{G}}
\newcommand{\calL}{\mathcal{L}}
\newcommand{\calS}{\mathcal{S}}
\newcommand{\calO}{\mathcal{O}}
\newcommand{\AAA}{\mathbb{A}}
\newcommand{\CC}{\mathbb{C}}
\newcommand{\FF}{\mathbb{F}}
\newcommand{\GG}{\mathbb{G}}
\newcommand{\PP}{\mathbb{P}}
\newcommand{\QQ}{\mathbb{Q}}
\newcommand{\ZZ}{\mathbb{Z}}
\DeclareMathOperator{\Br}{Br}
\DeclareMathOperator{\et}{et}
\DeclareMathOperator{\fppf}{fppf}
\DeclareMathOperator{\PGL}{PGL}
\DeclareMathOperator{\Pic}{Pic}
\DeclareMathOperator{\Gal}{Gal}
\DeclareMathOperator{\Res}{Res}
\DeclareMathOperator{\Spec}{Spec}
\DeclareMathOperator{\SY}{SY}
\DeclareMathOperator{\Hom}{Hom}
\begin{document}

\title{Tamely ramified morphisms of curves and Belyi's theorem in positive characteristic}
\author{Kiran S. Kedlaya, Daniel Litt, and Jakub Witaszek}
\address{Department of Mathematics, University of California, San Diego, La Jolla, CA 92093, USA}
\email{kedlaya@ucsd.edu}
\address{Department of Mathematics, University of Georgia, Athens, GA 30602, USA}
\email{dlitt@uga.edu}
\address{Department of Mathematics, University of Michigan, Ann Arbor, MI 48109, USA}
\email{jakubw@umich.edu}

\date{\today}
\thanks{Thanks to Seichi Yasuda for directing us to Hoshi's work on the Schwarzian derivative and to Piotr Achinger and Maciej Zdanowicz for helpful comments. {We thank the referee for their suggestions and for reading the article carefully.} This paper began while all three authors were at IAS for the 2018--2019 academic year,
supported by a visiting professorship (Kedlaya) and NSF grant DMS-1638352 (Litt, Witaszek).
Kedlaya and Witaszek also visited MSRI (NSF grant DMS-1440140) during spring 2019.
Kedlaya was additionally supported by NSF (grants DMS-1501214, DMS-1802161) and UCSD (Warschawski Professorship). Litt was additionally supported by NSF grant DMS-2001196.}

\begin{abstract}
We show that every smooth projective curve over a finite field $k$ admits a finite tame morphism to the projective line over $k$. Furthermore, we construct a curve with no such map when $k$ is an infinite perfect field of characteristic two. Our work leads to a refinement of the tame Belyi theorem in positive characteristic, building on results of Sa\"idi, Sugiyama--Yasuda, and Anbar--Tutdere.
\end{abstract}

\maketitle

\section{Introduction}

The difference between tame and wild ramification plays a crucial role in positive characteristic algebraic geometry. Roughly speaking, finite tame morphisms behave as if the characteristic was zero while wild morphisms do not.
For example, the Riemann-Hurwitz formula for a finite map $f \colon X \to Y$ of smooth projective curves holds in characteristic $p>0$ without modification when $f$ is tame,
but must be modified somewhat when $f$ is wild.
For another example, the study of fundamental groups of curves over a base field of characteristic 0
is largely mirrored by Grothendieck's theory of tame \'etale fundamental groups \cite[Expos\'e~X]{Gro71}.
It is thus a fundamental question whether or not a given curve admits a tame morphism to $\PP^1_k$.

In most characteristics, one can produce tame morphisms by considering morphisms {whose ramification indices are small}.
To wit, we say that $f$ is \emph{simply ramified} if the ramification indices are equal to one or two.
The following result is classical for $p=0$, and due to Fulton for $p \neq 0$
 \cite{Ful69}.

\begin{theorem}[Fulton] \label{T:fulton}
{\noindent Let $X$ be a smooth, projective, geometrically irreducible curve over a field $k$.}
If $p=0$, or if $p>2$ and $k$ is infinite, then there exists a finite separable morphism $f: X \to \PP^1_k$ which is everywhere simply ramified,
and hence tame.
\end{theorem}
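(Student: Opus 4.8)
The plan is to realize $f$ as a general linear projection to $\PP^1_k$ out of a large-degree projective embedding of $X$. Fix a very ample line bundle $M$ on $X$ with $\deg M$ large, embedding $X$ as a non-degenerate curve in $\PP^N_k$ with $N=h^0(X,M)-1$, which we may assume is $\geq 3$. Each codimension-$2$ linear subspace $\Lambda$ in the Grassmannian $\mathrm{G}:=\mathrm{Gr}(N-2,\PP^N_k)$ disjoint from $X$ determines a finite morphism $\pi_\Lambda|_X\colon X\to\PP^1_k$ of degree $\deg M$, the linear projection away from $\Lambda$, and I want to find such a $\Lambda$, defined over $k$, for which $\pi_\Lambda|_X$ is separable and everywhere simply ramified. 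Granting that the set of such ``good'' centers contains a non-empty Zariski-open subset $U\subseteq\mathrm{G}$, we are done: $\mathrm{G}$ is $k$-rational and $k$ is infinite, so $U(k)\neq\emptyset$, and for $\Lambda\in U(k)$ the morphism $f:=\pi_\Lambda|_X$ is finite, separable, everywhere simply ramified, and --- since when $p\neq 2$ the ramification indices $1,2$ are prime to $p$ --- tame.

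To locate $U$, recall that the ramification of a projection is governed by contact with hyperplanes through its center: for $x\in X$ the ramification index $e_x$ of $\pi_\Lambda$ at $x$ equals the order of contact at $x$ of $X$ with the unique hyperplane $H\subset\PP^N_k$ containing $\Lambda$ and $x$; in particular $e_x\geq 3$ if and only if $H$ contains the second osculating plane $\mathrm{Osc}^2_x(X)$. Let $Z\subset X$ be the finite locus where $\mathrm{Osc}^2_x(X)$ fails to be genuinely $2$-dimensional. For $x\notin Z$, the condition ``$H$ contains $\mathrm{Osc}^2_x(X)$'' translates to ``$\Lambda$ meets the plane $\mathrm{Osc}^2_x(X)$ in a line or more'', which for fixed $x$ is a Schubert condition of codimension $2$ in $\mathrm{G}$; hence the incidence variety $\{(x,\Lambda):x\notin Z,\ e_x(\Lambda)\geq 3\}$ has dimension at most $\dim X+(\dim\mathrm{G}-2)=\dim\mathrm{G}-1<\dim\mathrm{G}$, so for $\Lambda$ in a non-empty open subset of $\mathrm{G}$ there is no $x\notin Z$ with $e_x(\Lambda)\geq 3$. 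Intersecting with further non-empty open conditions --- that $\Lambda$ be disjoint from $X$ (automatic for general $\Lambda$, as $\dim X+\dim\Lambda=N-1<N$); that $\pi_\Lambda|_X$ be separable, an open condition which is non-vacuous because if one takes $M=L^{\otimes m}$ for a fixed separable $g\colon X\to\PP^1_k$ with $L=g^*\calO(1)$ and $m$ large and coprime to $p$, then the pencil spanned by the $m$-th powers of a basis of $g^*H^0(\PP^1_k,\calO(1))$ projects to a separable morphism; and, at each of the finitely many points $x\in Z$, that the tangent line to $X$ at $x$ miss $\Lambda$ (a Schubert divisor in $\mathrm{G}$), which forces $x$ to be unramified --- yields the desired open $U$, on which $\pi_\Lambda|_X$ is finite, separable, and has every $e_x\leq 2$.

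The one ingredient that is not purely formal is the input to the ramification analysis: that the chosen embedding is \emph{second-order classical}, i.e.\ that $\mathrm{Osc}^2_x(X)$ is genuinely $2$-dimensional for generic $x$ (equivalently, that the generic order sequence of $|M|$ begins $0,1,2$). In characteristic $0$ this is automatic --- a curve whose second osculating space agrees with its tangent line everywhere is a line. In characteristic $p$ it can fail for special embeddings (the non-classical curves), and this is precisely where the hypothesis $\deg M$ large is used: once the linear system is large relative to $g_X$ and $p$, the arithmetic of admissible order sequences forces the second order to equal $2$ --- there are too few ``inadmissible'' residues modulo $p$ to accommodate all $N+1$ distinct orders in $[0,\deg M]$ otherwise. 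This can be deduced from the St\"ohr--Voloch theory of Weierstrass points and order sequences, or, in the spirit of Fulton's original approach, from a direct non-vanishing statement for the relevant Hasse--Wronskian. I expect establishing second-order classicality in characteristic $p$ to be the main obstacle; the contact-order computation, the Schubert codimension counts, and the descent to a $k$-rational point are routine. (The restriction $p\neq 2$ is needed only for the final implication ``simply ramified $\Rightarrow$ tame'': simple ramification can still be arranged when $p=2$, but then the index-$2$ points are wildly ramified.)
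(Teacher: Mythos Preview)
The paper does not give its own proof of this theorem; it is quoted from Fulton \cite{Ful69} as background. So there is no in-paper argument to compare against, and the question is simply whether your sketch stands on its own.

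It does, and in fact the step you flag as the main obstacle is easier than you suggest. You worry that the embedding by $|M|$ might fail to be second-order classical in positive characteristic, and invoke St\"ohr--Voloch order-sequence theory to repair this. But for a \emph{complete} linear system of large degree this is automatic from Riemann--Roch: if $\deg M \geq 2g_X+2$ then for every $x\in X$ and every $0\le n\le 3$ one has $\deg(M(-nx))>2g_X-2$, so $h^0(M(-nx))=\deg M-n-g_X+1$ drops by exactly one at each step. Hence the orders $0,1,2$ occur at every point, $\dim\mathrm{Osc}^2_x(X)=2$ for all $x$, and your exceptional set $Z$ is empty. No appeal to Hasse--Wronskians or admissible order sequences is needed; the phenomenon of non-classical curves is a feature of \emph{incomplete} linear systems (e.g.\ sub-pencils like $\langle 1,f^p\rangle$), not of $|M|$ itself.

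With $Z=\emptyset$ the rest of your argument is clean: the incidence locus $\{(x,\Lambda):e_x(\Lambda)\ge 3\}$ is closed in $X\times\mathrm{G}^\circ$ with fibers of codimension $2$ over $X$, hence projects to a proper closed subset of $\mathrm{G}^\circ$; separability for general $\Lambda$ follows already from the same dimension count applied to the tangent-line incidence (the ramification locus is finite for general $\Lambda$), so your detour through $M=L^{\otimes m}$ is not strictly necessary; and the descent to a $k$-point uses only that the open set $U$ is Galois-stable and that $\mathrm{G}$ is $k$-rational with $k$ infinite. The restriction $p\neq 2$ enters only in the last clause, exactly as you note.
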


The main goal of this paper is to address the two missing cases in this theorem. One of these is where $p>2$ and $k$ is finite; in this case, one can extend the result of Fulton by a sieving argument (Theorem~\ref{T:odd finite})
in the style of the Bertini theorem of Poonen \cite{Poo04}. 

The remaining, more substantive case is when $p=2$, as then simply ramified morphisms are not always tame.
In fact, it was previously known that $X$ need not itself admit a tame separable morphism to $\PP^1_k$ when $k$ is not perfect,
by an example of Schr\"oer (see Example~\ref{exa:schroer}); on the other hand, a recent breakthrough result of Sugiyama--Yasuda \cite{SY18} shows that such a morphism always exists if $k$ is algebraically closed. In particular, the obstruction to such a morphism is arithmetic (depending on $k$) rather than geometric.
 
Using the Sugiyama--Yasuda construction, we prove the following.

\begin{theorem} \label{T:main1}
If $k$ is finite, then there exists a finite separable tame morphism $f: X \to \PP^1_k$.
\end{theorem}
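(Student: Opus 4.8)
The plan is to reduce immediately to the characteristic two case: for $p=0$ the desired morphism is Fulton's Theorem~\ref{T:fulton}, and for $p>2$ with $k$ finite it is Theorem~\ref{T:odd finite}. So assume $p=2$ and $k$ finite. After base change to $\overline k$, the theorem of Sugiyama--Yasuda \cite{SY18} provides a finite separable tame morphism $X_{\overline k}\to\PP^1_{\overline k}$; this morphism and all the auxiliary data in its construction are already defined over some finite subextension $k\subseteq k_1\subseteq\overline k$, so we obtain a finite separable tame $f_1\colon X_{k_1}\to\PP^1_{k_1}$ of some degree $n$. The content of the theorem is then a descent problem from $k_1$ to $k$. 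The first step I would carry out is to extract from the Sugiyama--Yasuda construction a fixed finite set $S\subset X$ of closed points outside of which every morphism ``of the same shape'' as $f_1$ is automatically tame, together with, for each $s\in S$, the local condition pinning down tameness near $s$; in characteristic two this step is the crux, since a generic pencil is wildly ramified at each of its roughly $2n$ branch points, so one genuinely needs that the Sugiyama--Yasuda family has been engineered so that wild ramification is confined to $S$. One then packages the relevant morphisms as a geometrically irreducible $k$-variety --- for instance an open subscheme of a Grassmannian of pencils in $|\calL|$ for a suitable line bundle $\calL$ on $X$ defined over $k$ --- on which finiteness, separability and tameness are open conditions, the last of these cut out (after the first step) by conditions supported on $S$.

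With that in place, there are two routes to a $k$-point, and I would pursue whichever fits the Sugiyama--Yasuda framework more cleanly. The first is cohomological: if the construction can be arranged so that the obstruction to descending $f_1$ is the triviality of a torsor under a smooth connected linear algebraic $k$-group, or under some $\PGL_m$, then it vanishes because $k$ is finite --- Lang's theorem, respectively $H^1(k,\PGL_m)\hookrightarrow\Br(k)=0$ --- so the morphism, possibly after increasing the degree, descends to $k$. The second route is a closed point sieve in the style of Poonen \cite{Poo04}, exactly as in the proof of Theorem~\ref{T:odd finite}: having reduced tameness to finitely many local conditions supported on $S$ (on top of finiteness and separability, which are open and nonempty), one checks these are independent at the relevant closed points and runs the sieve to show that a positive proportion of high-degree members of the family is finite, separable and tame, whence such a member exists over the finite field $k$. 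The sieve has the advantage of being insensitive to the size of $k$, so I would fall back on it if no clean torsor description is available.

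The main obstacle in either route is the first step: making precise, from the explicit geometry of \cite{SY18}, that wild ramification of the relevant family of morphisms is confined to a fixed finite set of closed points of $X$, and that ``tame at $s$'' can be prescribed independently over the $s\in S$. Once that interface is established, the descent (or the sieve) is routine finite-field geometry. One should also check along the way that the Sugiyama--Yasuda construction behaves well over the perfect field $k$ and not merely over $\overline k$; the need for $k$ to be perfect here is visible in Schr\"oer's imperfect-field counterexample (Example~\ref{exa:schroer}).
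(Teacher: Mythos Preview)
Your reduction to $p=2$ with $k$ finite is correct, and the cohomological route is indeed the one the paper follows, but there is a genuine gap in where you locate the obstruction. The paper does not frame the problem as descent of a morphism from some $k_1$ down to $k$; instead, the Sugiyama--Yasuda construction is run directly over $k$ and produces, via the symbol $\SY$ and the notion of a \emph{pseudotame} morphism (Definition~\ref{D:pseudotame}), a canonical smooth conic bundle $Z_X \to X^{(2)}$ (see \S\ref{sec:conic bundle}). The existence of a pseudotame morphism over $k$ is equivalent to this bundle admitting a section (Lemma~\ref{L:SY class to pseudotame}), so the obstruction is a class in $\Br(X^{(2)})[2]$, not a torsor over $\Spec k$. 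Neither Lang's theorem nor the vanishing of $\Br(k)$ is the operative input; what is needed is the vanishing of a class in the Brauer group of the \emph{curve}.

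That vanishing comes from class field theory for the global function field $k(X^{(2)})$: by Albert--Brauer--Hasse--Noether a Brauer class on a curve over a finite field vanishes once it is locally trivial at every closed point, and since the SY bundle is a smooth conic over all of $X^{(2)}$, each completed local fiber has good reduction and hence a section by Hensel (Theorem~\ref{T:finite to tame}). One then upgrades pseudotame to tame via Lemma~\ref{L:pseudotame to tame}. As for your sieve route, the ``first step'' you flag as the main obstacle is not provided by the SY construction in the form you envisage: it does not confine wild ramification to a fixed finite set $S$, and in characteristic $2$ the wild ramification locus of a generic pencil genuinely moves with the member. The pseudotame reformulation is precisely what sidesteps this difficulty, replacing the pointwise tameness conditions you hoped to sieve on by a single global obstruction in $\Br(X^{(2)})$.
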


\begin{theorem} \label{T:main2}
There always exists a finite extension $k'$ of $k$, of degree depending only on the genus of $X$ {(except possibly if this genus is $1$)}, for which
$X_{k'}$ admits a {finite separable tame morphism} to $\PP^1_{k'}$. 
\end{theorem}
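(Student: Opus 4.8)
The plan is to reduce to a single genuinely new case and then to push the Sugiyama--Yasuda construction down to a small field. By Theorem~\ref{T:fulton} one may take $k'=k$ when $p=0$, or when $p>2$ and $k$ is infinite; by Theorem~\ref{T:main1} one may take $k'=k$ when $k$ is finite (in any characteristic); and if $p=2$ and $g_X=0$ then $X$ is a conic, which acquires a rational point and so becomes $\PP^1_{k'}$ over a quadratic extension $k'$, after which the identity map works. So assume from now on that $p=2$, that $k$ is infinite, and that $g_X\ge 2$.

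The first step is to pass to a finite extension $k_1/k$, of degree bounded in terms of $g_X$ alone, over which $X$ acquires a rational point. The embedding of $X$ by the $k$-rational very ample bundle $\omega_X^{\otimes 3}$ has degree $6g_X-6$, and since $k$ is infinite a general $k$-rational hyperplane meets $X$ in a reduced $k$-rational divisor of degree $6g_X-6$; such a divisor contains a closed point $x_0$ with $[k(x_0):k]\le 6g_X-6$, and over $k_1:=k(x_0)$ the curve $X$ has a rational point. This is also the step that has no bound in terms of the genus when $g_X=1$, and essentially so: a tame separable morphism of degree $d$ forces $\Pic^d_{X_{k'}/k'}(k')\ne\emptyset$, in characteristic $2$ one cannot take $d=2$ (degree $2$ is Artin--Schreier, hence wild, or inseparable), and when $g_X=1$ the period of the $\Pic^0_{X/k}$-torsor $\Pic^1_{X/k}$ can be an arbitrarily large power of $2$, so no extension of bounded degree makes a usable $d$ available.

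Now replace $k$ by $k_1$, so that $k$ is infinite with a rational point $P_0$ and $g_X\ge 2$; the extension eventually produced will be this $k_1$, of degree at most $6g_X-6$. By Sugiyama--Yasuda there is a tame separable $X_{\overline k}\to\PP^1_{\overline k}$ of some degree, and composing with $z\mapsto z^m$ for suitable odd $m$ (a separable tame self-map of $\PP^1$) one may fix an integer $D$ with $2g_X-1\le D\le D_1(g_X)$, depending only on $g_X$, for which $X_{\overline k}$ admits a tame separable cover of degree $D$ (here one uses that the degree coming out of the Sugiyama--Yasuda construction is itself bounded in terms of the genus). Set $L_0=\calO_X(DP_0)$, a $k$-rational line bundle of degree $D$, and let $G=\mathrm{Gr}\bigl(2,H^0(X,L_0)\bigr)$ be the Grassmannian of pencils in $|L_0|$; inside $G$ the base-point-free, the separable, and the tamely ramified pencils form successively smaller Zariski-open subschemes. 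As $G$ is a rational $k$-variety and $k$ is infinite, a nonempty open subscheme of $G$ automatically has a $k$-point; so it is enough to show that the tame locus is nonempty after base change to $\overline k$, that is, that $|L_0|$ contains a tamely ramified separable pencil over $\overline k$. A $k$-point of the tame locus then gives the required morphism $X\to\PP^1_k$.

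The hard part is exactly this last claim, for the specific $k$-rational bundle $L_0=\calO_X(DP_0)$ rather than for some uncontrolled translate of it by $\Pic^0_{X/k}$, which is all the bare Sugiyama--Yasuda statement provides. I would attack it by revisiting their construction: over $\overline k$ --- and more generally over any infinite field carrying a rational point --- the ``general position'' choices it requires live on rational varieties, so one should be able to run it with $P_0$ and the linear system $|\calO_X(DP_0)|$ (for $D$ large in terms of $g_X$) as input. If a direct adaptation is not available, one can instead show that the set of $L\in\Pic^D_{X/k}$ whose complete linear system contains a tame separable pencil is open and dense over $\overline k$ --- for $D\ge g_X$ the relevant $\Hom$-scheme is smooth at a tame cover $f$, and its differential to $\Pic^D_{X/k}$ is surjective, because $H^1(X,f^*T_{\PP^1})=H^1(X,L^{\otimes 2})=0$ --- but then one must still force a bounded-degree extension over which a $k$-rational point of $\Pic^D_{X/k}$ lands in this dense locus, and the clean route to that is precisely the uniform form of the Sugiyama--Yasuda construction above. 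I expect essentially all the remaining work to be concentrated at this point.
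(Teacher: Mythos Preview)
Your reduction steps are sound and the Grassmannian--rationality framework is a natural idea, but the argument is incomplete precisely at the point you yourself flag: you never prove that the tame locus in $\mathrm{Gr}\bigl(2, H^0(X, L_0)\bigr)$ is nonempty over $\overline k$ for the \emph{specific} bundle $L_0 = \calO_X(DP_0)$. The deformation-theoretic sketch (smoothness of the $\Hom$-scheme and surjectivity of its differential onto $\Pic^D$) only shows that the locus of $L \in \Pic^D_{X_{\overline k}}$ admitting a tame pencil is open and nonempty; it does not force any particular $k$-rational point of $\Pic^D$ to lie in it, and the complement may well be a divisor containing $[L_0]$. ``Revisiting Sugiyama--Yasuda'' to produce a tame morphism in a prescribed linear system is not routine either: their construction (and the paper's Lemma~\ref{L:pseudotame to tame}) modifies a given $f$ within its $\Gamma$-orbit via translations by fourth powers and by taking odd powers, operations that change the degree and the underlying line bundle in an uncontrolled way. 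So as written, the proposal is an outline with its central step missing.

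The paper's proof (Theorem~\ref{T:extension to realize tame}) avoids this obstacle entirely. Rather than trying to realize a tame map inside a fixed $k$-rational linear system, it interprets the Sugiyama--Yasuda construction as a canonical smooth conic bundle $Z_X \to X^{(2)}$ whose class in $\Br(X^{(2)})[2]$ vanishes if and only if $X$ admits a pseudotame (equivalently, tame) morphism over $k$ (Lemma~\ref{L:SY class to pseudotame}). After acquiring a rational point (your step, carried out there via $|\omega_X|$), the Leray spectral sequence pushes this Brauer class into $H^1_{\fppf}(k, J[2])$ for $J$ the Jacobian of $X^{(2)}$; since the group scheme $J[2]$ has length bounded in terms of $g_X$, the connected--\'etale sequence yields a bounded-degree extension killing that class, and one further quadratic extension disposes of any residue in $\Br(k')[2]$. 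This cohomological route never needs to tie the tame map to a predetermined line bundle, which is exactly where your approach stalls.
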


We establish Theorem~\ref{T:main1} and Theorem~\ref{T:main2} by analyzing the Sugiyama--Yasuda construction in geometric terms: it gives rise to a canonical collection of smooth conic bundles over $X$ with the property that the existence of a tame morphism from $X$ to $\PP^1_k$ is equivalent to the triviality of some conic bundle in the collection. (When $X$ is ordinary, this collection is reduced to a single bundle. In the general case, we do not know whether different bundles in the collection represent the same Brauer class.) 

Our other result is in the negative direction. The example of Schr\"oer cited above shows that when $k$ is not perfect, the finite extension $k'/k$ in Theorem~\ref{T:main2} is sometimes forced to contain a certain nontrivial purely inseparable extension of $k$. However, using the explicit nature of the Sugiyama--Yasuda construction, 
one can show that even when $k$ is perfect, one can find examples where $k'$ cannot equal $k$, as in the following
case (see \S\ref{sec:elliptic curves}).

\begin{theorem}
Let $k$ be the perfect closure of $\FF_2(t)$. Then there exists an ordinary elliptic curve $X$ over $k$
for which there exists no {finite separable tame morphism} from $X$ to $\PP^1_k$.
\end{theorem}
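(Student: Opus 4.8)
The plan is to use the geometric reformulation of the Sugiyama--Yasuda construction set up above. Since $X$ is an ordinary elliptic curve, the associated collection consists of a single smooth conic bundle $\pi\colon \mathcal{C}_X \to X$, and $X$ admits a finite separable tame morphism to $\PP^1_k$ if and only if $\pi$ has a section. As $\mathcal{C}_X$ is proper over $X$ and $X$ is a smooth curve, $\pi$ has a section if and only if its generic fibre has a $k(X)$-rational point, i.e.\ if and only if the $2$-torsion class $[\mathcal{C}_X] \in \Br(k(X))$ vanishes. So it suffices to exhibit one ordinary elliptic curve $X$ over $k$ with $[\mathcal{C}_X] \neq 0$.

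By the theorem of Sugiyama--Yasuda, $\mathcal{C}_X$ acquires a section after base change to $\overline{\FF}_2$, so $[\mathcal{C}_X]$ dies in $\Br(\overline{\FF}_2(X_{\overline{\FF}_2}))$. In particular its restriction to every closed point $P$ of $X$ vanishes --- indeed $\Br(\kappa(P))[2]=0$, since $\kappa(P)$ is the perfect closure of a global function field and a degree-$2$ purely inseparable base change doubles every local invariant, hence annihilates every $2$-torsion class. Thus the nontriviality of $[\mathcal{C}_X]$ cannot be detected by restriction to closed points and can only be witnessed by ramification: it is enough to find a closed point $P$ at which the Artin--Schreier residue
\[
\partial_P\bigl([\mathcal{C}_X]\bigr) \in H^1\bigl(\kappa(P), \ZZ/2\bigr) = \kappa(P)/\wp\bigl(\kappa(P)\bigr)
\]
is nonzero. (This is consistent with geometric triviality, since a ramified class in $\Br(k(X))$ can still die after base change to $\overline{\FF}_2$.)

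Concretely, I would take an ordinary elliptic curve over $k$ in the normal form $y^2 + xy = x^3 + a_2 x^2 + a_6$ with $a_6 \in k^\times$, choosing $a_2, a_6$ to involve the transcendental $t$ in a suitable way: over $\overline{\FF}_2$ the class is forced to be trivial, so whatever survives is genuinely arithmetic, and $t$ is the feature of $k$ that permits it to persist. Feeding this model into the explicit description of $\mathcal{C}_X$ via the Schwarzian derivative obtained above presents $[\mathcal{C}_X]$ as an explicit quaternion symbol $[\alpha, \beta)$ over $k(X)$. One then selects a closed point $P$ of $X$ at which, after a suitable manipulation of the symbol, $\alpha$ is regular and $\beta$ is a uniformizer, so that $\partial_P([\mathcal{C}_X]) = \overline{\alpha} \in \kappa(P)/\wp(\kappa(P))$, and checks that this is nonzero. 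Here one uses that, although $k$ is perfect, the map $\wp$ is far from surjective: $t \notin \wp(\FF_2(t^{1/2^n}))$ for every $n$ by an elementary degree comparison, hence $t \notin \wp(k)$, and likewise the relevant element stays outside $\wp(\kappa(P))$ by a $t$-adic valuation argument. This gives $[\mathcal{C}_X] \neq 0$, so $X$ has no finite separable tame morphism to $\PP^1_k$; that $X$ is ordinary is immediate from the chosen Weierstrass form.

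The main obstacle is the middle step: pushing the Schwarzian-derivative construction precisely through the chosen Weierstrass model, writing $[\mathcal{C}_X]$ as an honest symbol, and locating a closed point $P$ where the residue is computed by a reduction that visibly avoids $\wp$. Arranging the dependence of $a_2, a_6$ on $t$ so that this succeeds --- rather than producing a residue that happens to land in $\wp(\kappa(P))$ --- is where the real care lies; the rest is formal once the single-bundle reduction (valid because $X$ is ordinary) and the residue formalism are in hand.
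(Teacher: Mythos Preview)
Your setup is correct: for an ordinary elliptic curve there is a single SY conic bundle $\mathcal{C}_X \to X^{(2)}$, and the existence of a tame morphism is equivalent to the vanishing of its class in $\Br(X^{(2)})$. But your proposed detection mechanism cannot work. The SY bundle is, by construction, a \emph{smooth} conic bundle over the entire curve $X^{(2)}$ (this is the content of the gluing in \S\ref{sec:conic bundle}); hence its Brauer class lies in $\Br(X^{(2)})$, not merely in $\Br(k(X^{(2)}))$, and is therefore unramified at every closed point. All of your residues $\partial_P([\mathcal{C}_X])$ vanish automatically, so no choice of $P$ or of the parameters $a_2,a_6$ will ever produce a nonzero residue. (This unramifiedness is precisely what makes Theorem~\ref{T:finite to tame} go through over finite fields: the class is everywhere of good reduction, so local triviality plus Albert--Brauer--Hasse--Noether kills it.) The honest obstruction lives in $\Br(X^{(2)})$ itself, which by the Leray filtration and the existence of a rational point sits inside $H^1(k,\underline{\Pic}(X^{(2)}))$; it is genuinely global over $k$ and is invisible to any local-at-$P$ computation on $X$.

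The paper's argument (Theorem~\ref{T:elliptic curve obstruction}(b) and the discussion following it) is accordingly quite different. One computes the conic explicitly as in \eqref{eq:EC conic}, assumes for contradiction that it has a section, so that $\mathcal{C}_X \cong \PP_X(\calE)$ for a rank-$2$ bundle $\calE$, and then exploits the Frobenius relation of Remark~\ref{R:double cover construction} to pin down $\calE$ in terms of torsion line bundles on $X_{\overline{k}}$. A Galois-descent analysis of the Atiyah classification of bundles on elliptic curves shows that no such $\calE$ can be defined over $k$, provided (i) none of $a,b,a+b$ lies in the image of $\varphi+1$ and (ii) $X(k)$ is torsion. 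Condition (ii) is an arithmetic input that your outline never invokes, and it is essential: without control on $X(k)$ one cannot rule out twisting $\calE$ by a degree-$0$ line bundle of infinite order. The paper secures (ii) for the specific curve $y^2+xy=x^3+tx^2+t^6$ by a $2$-descent (Kramer) together with the Mordell--Weil theorem over perfect closures of function fields.
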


To conclude this introduction, we give an application of Theorem~\ref{T:main1} to a refinement of the tame Belyi theorem in positive characteristic. Recall that Belyi's theorem \cite{Bel80, Bel02}
is commonly asserted in the following form: for $p=0$, $X$ admits a finite morphism to $\PP^1_k$ ramified only over $\{0,1,\infty\}$ if and only if $X$ descends to $\overline{\QQ}$. In this statement, the ``only if'' assertion is essentially due to Riemann, while the ``if'' assertion is Belyi's contribution and can be made somewhat more precise.
To wit, suppose that $X$ is defined over a subfield $k$ of $\overline{\QQ}$. Then for any finite morphism $f_0: X \to \PP^1_k$, one can find a finite morphism $f_1: \PP^1_{k} \to \PP^1_{k}$ such that $f_1 \circ f_0$ is ramified only over $\{0,1,\infty\}$: the proofs of \cite[Theorem~2.2, Theorem~2.4]{Gol12} produce such an $f_1$ by base extension from $\QQ$.

In positive characteristic, the direct analogue of this statement fails spectacularly: regardless of how big $k$ is,
$X$ admits a morphism to $\PP^1_k$ ramified only over $\infty$ (see Theorem~\ref{T:wild}). If one restricts to tamely ramified morphisms, however, then the ``only if'' assertion of Belyi's theorem becomes true again thanks to Grothendieck's theory of tame \'etale fundamental groups, which implies that Riemann's rigidity property persists.
The ``if'' assertion was established for $k = \overline{\FF}_p$ with $p>2$ by Sa\"\i di \cite[Th\'eor\`eme~5.6]{Sai97}; the case $p=2$ was handled by Anbar--Tutdere \cite[Theorem~2]{AT18} using the Sugiyama--Yasuda result.

We extend to positive characteristic the refined version of Belyi's theorem. As this includes the existence of a tame morphism when $k$ is finite, this is new both for $p>2$ and for $p=2$. See Theorem~\ref{T:tame Belyi positive characteristic} for the proof. 
\begin{theorem} \label{T:tame Belyi}
Suppose that $p>0$ and $k$ is algebraic over $\FF_p$. Then there exists a finite separable tame morphism $f: X \to \PP^1_k$ ramified only over $\{0,1,\infty\}$ (without base extension from $k$ to $\overline{\FF}_p$).
\end{theorem}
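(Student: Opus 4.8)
The plan is to realize the desired morphism as a composite $g \circ f_0$, where $f_0 \colon X \to \PP^1_k$ is \emph{any} finite separable tame morphism and $g \colon \PP^1_k \to \PP^1_k$ is a finite separable tame morphism, ramified only over $\{0,1,\infty\}$, chosen so that $g$ carries the branch locus of $f_0$ into $\{0,1,\infty\}$. If such a $g$ exists then $g \circ f_0$ is finite and separable, its ramification indices are products of ramification indices of $f_0$ and of $g$ and hence are prime to $p$ (so $g \circ f_0$ is tame), and its branch locus is contained in $g(\mathrm{branch}(f_0)) \cup \mathrm{branch}(g) \subseteq \{0,1,\infty\}$. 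Before anything else I would reduce to the case that $k$ is finite: $X$ is of finite presentation over $k$ and $k$ is a filtered union of finite subfields, so by a standard limit argument $X$ is the base change of a smooth projective geometrically irreducible curve $X_0$ over some finite subfield $\FF_q \subseteq k$, and a morphism $X_0 \to \PP^1_{\FF_q}$ with the required properties base changes to one $X \to \PP^1_k$ with the required properties (the ramification indices are computed over $\overline{\FF}_p$ and are unaffected, and $0,1,\infty$ are $\FF_q$-rational). From now on $k = \FF_q$, and Theorem~\ref{T:main1} supplies the map $f_0$; it remains to construct $g$.

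Constructing $g$ is the content of the positive-characteristic Belyi reduction: for $p > 2$ I would follow Sa\"idi \cite{Sai97}, and for $p = 2$ Anbar--Tutdere \cite{AT18}, the new point to verify being that the reduction requires no enlargement of $\FF_q$. Starting from the branch locus $S$ of $f_0$ --- a finite set of closed points of $\PP^1_{\FF_q}$ --- one post-composes successively with tame maps defined over the prime field $\FF_p$. In the first stage these are built from the minimal polynomials over $\FF_p$ of the points of $S$ (which collapse each closed point onto $0$), suitably modified so as to stay tame: for $p > 2$ by adjoining linear factors with roots in $\FF_p$ to correct the ramification index at $\infty$ and by arranging that the remaining critical points are simple, as in \cite{Sai97}; for $p = 2$, where even a simple critical point is wildly ramified, by using instead the explicit odd-degree maps of \cite{AT18}. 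After this stage $S$ lies in $\PP^1(\FF_p)$, which for $p = 2$ already equals $\{0,1,\infty\}$. In the second stage, when $p > 2$, one removes the branch points lying in $\PP^1(\FF_p) \setminus \{0,1,\infty\}$ one at a time by composing with the classical Belyi maps $z \mapsto \frac{(m+n)^{m+n}}{m^m n^n}\, z^m (1-z)^n$ with $p \nmid m,n,m+n$; such a map is available for each point $\lambda$ to be removed because every $\lambda \in \FF_p \setminus \{0,1\}$ can be written as $m/(m+n)$ with those congruence conditions (take $m=1$ and $n \equiv (1-\lambda)/\lambda \pmod p$), and iterating drives $S$ into $\{0,1,\infty\}$. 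Since every map used is defined over $\FF_p \subseteq \FF_q$, so is $g$; it is tame, ramified only over $\{0,1,\infty\}$, and satisfies $g(S) \subseteq \{0,1,\infty\}$. (If one prefers minimal polynomials over $\FF_q$, so that the first stage lands only in $\PP^1(\FF_q)$, one can finish instead with the tame map $z \mapsto z^{q-1}$, whose ramification indices are $q-1$ and hence prime to $p$, and which collapses $\PP^1(\FF_q)$ into $\{0,1,\infty\}$.)

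Combining the two halves gives $g \circ f_0 \colon X \to \PP^1_{\FF_q}$ with the stated properties, and base changing back to $k$ finishes. The genuinely hard input --- the existence of \emph{some} finite separable tame cover $f_0$ of $\PP^1$ over a finite field --- has already been secured in Theorem~\ref{T:main1}; the remaining work is the combinatorics of the Belyi reduction together with the bookkeeping that the Sa\"idi and Anbar--Tutdere constructions stay over the prime field, so that no base extension of $k$ creeps in. The one place that really demands care is characteristic $2$, where simple ramification is itself wild, so that at no stage can one coalesce branch points by the elementary device of forcing simple critical points; there one must use the odd-degree maps of \cite{AT18} (which, like Theorem~\ref{T:main1}, are ultimately powered by the Sugiyama--Yasuda construction) throughout both stages of the reduction.
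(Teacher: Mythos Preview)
Your proposal is correct, and in fact the paper's entire argument is the shortcut you tuck into a parenthetical. After reducing to finite $k$ and obtaining a tame $f_0$ (via Theorem~\ref{T:odd finite} for $p>2$ and Theorem~\ref{T:finite to tame} for $p=2$; note that Theorem~\ref{T:main1} as stated addresses only $p=2$), the paper simply chooses a power $q$ of $p$ large enough that every branch point of $f_0$ lies in $\PP^1(\FF_q)$ and post-composes with the single map $x \mapsto x^{q-1}$. This map is defined over $\FF_p \subseteq k$, is tame (ramification index $q-1$ at $0$ and $\infty$, unramified elsewhere), and collapses $\PP^1(\FF_q)$ to $\{0,1,\infty\}$. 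No minimal-polynomial first stage is needed: the branch locus of $f_0$, being a finite Galois-stable set of $\overline{k}$-points, already lies in some $\PP^1(\FF_q)$. Your main line through the Sa\"\i di and Anbar--Tutdere branch-reduction machinery is valid in principle, but over a finite field it buys nothing --- the one power map does all the work, uniformly in $p$, so there is nothing special to arrange in characteristic $2$ beyond the production of $f_0$ itself. (A small aside: the branch-reduction step for $\PP^1 \to \PP^1$ in \cite{AT18} is elementary and not itself powered by Sugiyama--Yasuda; the latter enters only in constructing $f_0$.)
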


\subsection*{Notations}

Throughout this paper, as above let $k$ be a field of characteristic $p \geq 0$;
 let $X$ be a smooth, projective, geometrically irreducible curve of genus $g_X$ over $k$;
 and let $X^\circ$ denote the set of closed points of $X$. Let $\overline{k}$ be an algebraic closure of $k$.
Let $R_X$ denote the set of finite separable nonconstant morphisms $f: X \to \PP^1_k$,
where the target is equipped with a fixed coordinate; by pulling back this coordinate, we identify
elements of $R_X$ with elements of the function field $k(X)$ with nonzero derivative.
(For $p=0$ this excludes only constants; for $p>0$ it excludes elements of the subfield $k \cdot k(X)^p$.)
For any field extension ${k'}$ of $k$, let $X_{{k'}}$ denote the base extension of $X$ from $k$ to ${k'}$,
and write $R_{X,{k'}}$ in place of $R_{X_{k'}}$.

For $f: X \to \PP^1_k$ a finite separable morphism 
and $x \in {X(\overline{k})}$, the \emph{ramification index} of $f$ at $x$ is the positive integer $e_x$ for which
\[
f^{-1} \frakm_{\PP^1_{\overline{k}}, f(x)} \cdot \calO_{X_{\overline{k}}} = \frakm_{X_{\overline{k}},x}^{e_x}.
\]
We say that $f$ is \emph{tamely ramified} at $x$ if $e_x$ is not divisible by $p$, and \emph{wildly ramified} at $x$ otherwise;
if $f$ is tamely ramified at every $x \in X$, we simply say that $f$ is \emph{tame}.
(For the purposes of checking this condition, it is permissible to work with points over a perfect closure of $k$ instead of an algebraic closure; this will be helpful later.)

\section{Odd characteristic}

We first treat the case of odd characteristic using a probabilistic argument in the style of Poonen's Bertini theorem over finite fields \cite{Poo04}.

\begin{theorem} \label{T:odd finite}
If $p>2$ and $k$ is finite, then there exists $f \in R_X$ which is everywhere simply ramified,
and hence tame.
\end{theorem}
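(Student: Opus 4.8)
The plan is to reduce the statement to a Bertini-type density theorem over $\FF_q$ (here and below put $q := \#k$) and then argue probabilistically as in Poonen \cite{Poo04}. For a line bundle $\calL$ on $X$ and sections $s,t \in H^0(X,\calL)$, a local computation shows that $W(s,t) := s\,dt - t\,ds$ is a well-defined global section of $\calL^{\otimes 2}\otimes\Omega^1_X$; a further such computation shows that if $s$ and $t$ have no common zero then $\operatorname{div} W(s,t)$ equals the ramification (different) divisor $\mathfrak d_f$ of $f := t/s\colon X \to \PP^1_k$, while a common zero of $s$ and $t$ forces $W(s,t)$ to vanish there to order $\ge 2$ (the leading terms of $s\,dt$ and $t\,ds$ cancel). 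Now suppose $W(s,t)$ is nonzero with reduced zero divisor; call such a section \emph{squarefree}. Then $s$ and $t$ have no common zero, $f$ is a finite morphism, it is separable because $df = W(s,t)/s^2 \ne 0$, and each different exponent satisfies $d_x = \operatorname{ord}_x W(s,t) \le 1$. Since $p > 2$, a wildly ramified point would have $d_x \ge e_x \ge p \ge 3$; hence $f$ is everywhere tamely ramified, so $e_x = d_x + 1 \le 2$, i.e.\ $f$ is everywhere simply ramified. (The converse is also true, again using $p \ne 2$, but is not needed.) It thus suffices to produce over $k$ a line bundle $\calL$ and sections $s,t$ with $W(s,t)$ squarefree.

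First I would fix an embedding $X \hookrightarrow \PP^n_k$ and take $\calL = \calO_X(d)$ for $d \gg 0$, so that restriction of degree-$d$ forms surjects onto $H^0(X,\calL)$. By Poonen's Bertini theorem, for such $d$ a positive proportion of degree-$d$ forms $F$ satisfy that $\operatorname{div}(F|_X)$ is reduced; I fix one and set $s := F|_X$. Letting $t$ range over $H^0(X,\calL)$, the goal is to show that $W(s,t)$ is squarefree for a positive proportion of $t$ as $d \to \infty$, which yields the desired $t$ for $d$ large. For a closed point $P$ of $X$, write $\mu(P) \subseteq H^0(X,\calL)$ for the set of $t$ with $\operatorname{ord}_P W(s,t) \ge 2$; one must bound the density of $\bigcup_P \mu(P)$ away from $1$. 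Working in a local coordinate and trivialization at a point $P$ not in $\operatorname{div}(s)$, one computes that the map sending the $2$-jet of $t$ at $P$ to the $1$-jet of $W(s,t)$ at $P$ is surjective, the relevant Jacobian minor being a unit multiple of $2\,s(P)^2$; this is precisely where the hypothesis $p \ne 2$ is used (and reflects the fact that in characteristic $2$ simply ramified maps need not be tame). Hence, whenever $H^0(X,\calL)$ surjects onto $2$-jets at $P$ — which holds as long as $3\deg P \le \deg\calL - 2g_X + 1$ — the locus $\mu(P)$ is a $k$-subspace of codimension $2\deg P$; for the finitely many $P$ in $\operatorname{div}(s)$ a similar computation gives $\mu(P) = \{t : t(P) = 0\}$, of codimension $\deg P$ (and avoiding all of these amounts to base-point-freeness of $\langle s,t\rangle$).

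Finally I would organize the estimate over $P$ into the three ranges of Poonen's argument. For $P$ of bounded degree $\le r$, as $d \to \infty$ the events $\{t \notin \mu(P)\}$ become jointly independent and their combined probability tends to $\prod_{\deg P \le r,\ P \notin \operatorname{div}(s)}(1 - q^{-2\deg P}) \cdot \prod_{P \in \operatorname{div}(s)}(1 - q^{-\deg P})$, which as $r \to \infty$ converges to a positive constant (a finite modification of $\zeta_X(2)^{-1}$). For $P$ of "medium" degree $r < \deg P \le \gamma d$, with $\gamma$ small enough that the codimension of $\mu(P)$ is still $2\deg P$, the density of the union of the $\mu(P)$ over this range is at most $\sum_{e > r} q^{-e}/e$, which is $o(1)$ uniformly in $d$. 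The main obstacle is the "high" range $\deg P > \gamma d$: here $W(s,t)$, when nonzero, has a zero divisor of degree $2\deg\calL + 2g_X - 2 = O(d)$, so a double zero at $P$ already forces $\deg P = O(d)$, but for $\deg P$ comparable to $d$ the jet maps above need no longer be surjective, and one must bound $\dim\mu(P)$ directly. I expect the right tool to be Poonen's "singular points of high degree" argument, adapted by differentiating along $X$ and splitting $t$ into its $p$-th-power constituents, and one may also use the freedom in the choice of $s$ (or vary $s$ and $t$ together) to rule out degenerate configurations. With that in hand, the density of $t$ with $W(s,t)$ squarefree is bounded below by a positive constant, so such $t$ exists for $d$ large, and the reduction above turns it into the required everywhere-simply-ramified element of $R_X$.
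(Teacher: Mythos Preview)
Your strategy is essentially the paper's: reduce simple ramification to a local $2$-jet condition on sections and run a Poonen-style sieve with a low/medium/high degree decomposition. Your Wronskian reformulation $W(s,t)=s\,dt-t\,ds$ is a clean way to package the ramification divisor, and your local computation correctly isolates the factor of $2$ that makes the argument break in characteristic $2$. The paper does the same computation in slightly different coordinates, expanding both sections to second order and reading off the quadratic conditions $s_{0,0}s_{1,1}-s_{0,1}s_{1,0}=0$ and $s_{0,0}s_{1,2}-s_{1,0}s_{0,2}=0$ directly.

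The one substantive difference is that you fix $s$ first (via Poonen's Bertini) and then sieve over $t$ alone, whereas the paper sieves over the pair $(s_0,s_1)$ simultaneously. This matters precisely at the point you flag as incomplete: the high-degree range. By keeping both sections random, the paper can invoke \cite[Lemma~2.5, Lemma~2.6]{BK12} off the shelf---those lemmas are stated for a map of the form $(s_0,s_1)\mapsto [s_0:s_1]$ and control ramification (not merely non-simple ramification) at medium and high degree points. With $s$ fixed, you are instead asking that a section of the form $W(s,t)$, linear in $t$ but not ranging over all of $H^0(X,\calL^{\otimes 2}\otimes\Omega^1_X)$, be squarefree; the \cite{BK12} lemmas do not apply verbatim, and the decoupling $t=t_0+g^p h$ you gesture at does give $W(s,t)=W(s,t_0)+g^p\,W(s,h)$, which should be enough to adapt Poonen's high-degree argument, but this adaptation is real work that you have not carried out. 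The paper's symmetric setup sidesteps it entirely (and as a bonus yields the tidy density $\zeta_X(2)^{-2}$).
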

\begin{proof}
Let $\calL$ be an ample line bundle on $X$ and let $n$ be a positive integer.
For $x \in X$, choose a generator $t_x$ of $\calL$ at $x$.
For any pair $(s_0, s_1) \in H^0(X, \calL^{\otimes n})^{\times 2}$, we can {choose a trivialization of $\calL$ in a neighborhood of $x$, thus identifying $t_x$ with a uniformizer in $\calO_{X,x}$, and then} expand around $x$ to obtain
\[
s_0 = s_{0,0} + s_{0,1} t_x + s_{0,2}t_x^2 + O(t_x^3), \qquad
s_1 = s_{1,0} + s_{1,1} t_x + s_{1,2}t_x^2 + O(t_x^3);
\]
the ratio $f = s_0/s_1 \in k(X)$ defines a morphism to $\PP^1_k$ in a neighborhood of $x$ if and only if
$s_{0,0}$ and $s_{1,0}$ are not both zero.
If this occurs, $f$ is ramified at $x$ if and only if $s_{0,0} s_{1,1} - s_{0,1} s_{1,0} = 0$.
If this also occurs, then $f$ fails to be simply ramified at $x$ if and only if $s_{0,0} s_{1,2} - s_{1,0} s_{0,2} = 0$. Here we used that $(\frac{s_0}{s_1})' = \frac{s'_0s_1 - s_0s'_1}{s_1^2}$ and
$(\frac{s_0}{s_1})'' = \frac{s''_0s_1 - s_0s''_1}{s_1^2}$.

Consequently, as soon as $n$ is large enough that $H^0(X, \calL^{\otimes n})$ surjects onto 
\[
H^0(\Spec \calO_{X,x}/\frakm_{X,x}^3, \calL^{\otimes n}) \cong \calO_{X,x}/\frakm_{X,x}^3,
\] 
{we may compute the probability that a random pair $(s_0, s_1)$ defines a simply ramified morphism at $x$ as follows. The number of $k$-rational points on the affine quadric in six variables 
\[
s_{0,0} s_{1,1} - s_{0,1} s_{1,0} = s_{0,0} s_{1,2} - s_{1,0} s_{0,2} = 0
\]
is $(\kappa(x)^2-1)\kappa(x)^2 + \kappa(x)^4 = 2\kappa(x)^4 - \kappa(x)^2$: for every $(s_{0,0}, s_{1,0})\neq (0,0)$ we have $\kappa(x)^2$ choices for the other variables, and for $(s_{0,0}, s_{1,0})= (0,0)$ we have $\kappa(x)^4$ such choices. Hence the desired probability is $\frac{\kappa(x)^6 - 2\kappa(x)^4 + \kappa(x)^2}{\kappa(x)^6} = (1 - \kappa(x)^{-2})^2$.}

Put $q = \# k$. Then for $m$ a positive integer, the number of $x \in X^\circ$ with $\#\kappa(x) = q^m$
is $O(q^m)$; it follows that the product $\prod_{x \in X^\circ} (1 - \kappa(x)^{-2})^2$ converges absolutely to a positive limit (namely $\zeta_X(2)^{-2}$, where $\zeta_X(s)$ is the zeta function of $X$). Consequently, if we write $S_n$ for the set of pairs $(s_0, s_1) \in H^0(X, \calL^{\otimes n})^{\times 2}$ which define a morphism in $R_X$ which is everywhere simply ramified, it will suffice to check that
\[
\lim_{n \to \infty} \frac{\#S_n}{\#H^0(X, \calL^{\otimes n})^{\times 2}} = 
\prod_{x \in X^\circ} (1 - \kappa(x)^{-2})^2,
\]
as this will then imply that $S_n \neq \emptyset$ for $n$ large. Note that this does not follow from the previous paragraph, because the number $n$ for which $H^0(X, \calL^{\otimes n})$ surjects onto $\calO_{X,x}/\frakm_{X,x}^3$ depends on the point $x$. To circumvent this problem we follow the paradigm of \cite{Poo04}.

Fix a positive integer $e$. We then distinguish points of $x$ as being of \emph{low degree}, \emph{medium degree}, or \emph{high degree} according to whether the degree of $x$ over $k$ belongs to 
\[
[1, e], \qquad \left[e+1,  \tfrac{n}{2} \right], \qquad
\left( \tfrac{n}{2} , \infty \right).
\]
For $n$ large compared to $e$, the preceding analysis shows that $(s_1,s_2)$ define a simply ramified morphism at each point of low degree with probability equal to the product of $(1 - \kappa(x)^{-2})^2$ as $x$ ranges over these points.
For points of medium degree, we may apply {\cite[Lemma~2.5]{BK12}} to see
that the probability that the morphism is ramified (simply or not) at some such point tends to 0 as $e \to \infty$
(uniformly in $n$).
For points of high degree, we may apply {\cite[Lemma~2.6]{BK12}} to see that 
the probability that the morphism is ramified (simply or not) at some such point tends to 0 as $n \to \infty$.
Combining these results prove the claim.
\end{proof}

\begin{remark}
We did not see how to deduce Theorem~\ref{T:odd finite} directly from the main results of \cite{Poo04}. It would be interesting to see whether one of the many subsequent variations of that result give a direct implication of
Theorem~\ref{T:odd finite}, or if not whether there is a hitherto unknown variation that would do so.
\end{remark}

\section{An obstruction to tame morphisms}

We now assume $p=2$ until further notice.
In this context, we next describe an obstruction to the existence of tame morphisms
discovered by Schr\"oer \cite[Theorem~6.1]{Sch03a}.

\begin{prop}[Schr\"oer] \label{P:tame to square}
Suppose that there exists $f \in R_X$ which is tame. Then 
the canonical bundle $\omega_{X/k}$ is a square in $\Pic(X)$.
\end{prop}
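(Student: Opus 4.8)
The plan is to deduce this from the relative canonical bundle formula. For any finite separable morphism $f\colon X\to\PP^1_k$ of smooth proper curves one has a canonical isomorphism $\omega_{X/k}\cong f^*\omega_{\PP^1_k/k}\otimes\calO_X(R_f)$, where $R_f$ denotes the ramification divisor of $f$ (equivalently, the different divisor of $k(X)/k(\PP^1)$); this identity holds for every separable $f$, and tameness will only be used to control $R_f$. Since $\omega_{\PP^1_k/k}\cong\calO_{\PP^1_k}(-2)=\calO_{\PP^1_k}(-1)^{\otimes2}$, the pullback $f^*\omega_{\PP^1_k/k}$ is automatically a square in $\Pic(X)$. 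So the whole statement reduces to showing that $\calO_X(R_f)$ is a square, and for that it suffices to prove that $R_f$ is an \emph{even} divisor, i.e.\ $R_f=2R'$ for some effective divisor $R'$ on $X$; then $\omega_{X/k}\cong\bigl(f^*\calO_{\PP^1_k}(-1)\otimes\calO_X(R')\bigr)^{\otimes2}$.

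To see that $R_f$ is even I would argue pointwise. At a closed point $x\in X^\circ$ with image $y=f(x)$, the multiplicity of $R_f$ at $x$ is the different exponent $d_x$ of $\calO_{X,x}/\calO_{\PP^1_k,y}$. If $f$ is tamely ramified at $x$ in the \emph{classical} sense --- the ramification index $e_x$ is prime to $p$ \emph{and} the residue extension $\kappa(x)/\kappa(y)$ is separable --- then $d_x=e_x-1$. Since $p=2$, tameness makes $e_x$ odd, hence $d_x$ even; summing over $x$ gives $R_f=2R'$ with $R'=\sum_x\tfrac{e_x-1}{2}[x]$ an effective divisor on $X$, and we are done. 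Thus everything comes down to the implication: if $f$ is tame in the sense of the paper (i.e.\ $p\nmid e_{\bar x}$ for every $\bar x\in X(\bar k)$), then $f$ is classically tamely ramified at every closed point of $X$.

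The main obstacle is precisely this last implication, because $k$ may be imperfect, and then a closed point $x$ can have $\kappa(x)/\kappa(y)$ inseparable even when all $e_x$ are prime to $p$, in which case $d_x=e_x-1$ fails. (When $k$ is perfect there is nothing to check: residue extensions are automatically separable.) I would resolve this by a ramification-index computation after base change to $\bar k$. For a smooth curve the base-change map $X_{\bar k}\to X$ has ramification index $e_{\bar x/x}=[\kappa(x):k]_{\mathrm{insep}}$ at any $\bar x$ over $x$, and likewise $e_{\bar y/y}=[\kappa(y):k]_{\mathrm{insep}}$; multiplicativity of ramification indices along $X_{\bar k}\to X\xrightarrow{f}\PP^1_k$ compared with $X_{\bar k}\xrightarrow{f_{\bar k}}\PP^1_{\bar k}\to\PP^1_k$, together with $[\kappa(x):k]_{\mathrm{insep}}=[\kappa(x):\kappa(y)]_{\mathrm{insep}}\,[\kappa(y):k]_{\mathrm{insep}}$, yields $e_{\bar x}=[\kappa(x):\kappa(y)]_{\mathrm{insep}}\cdot e_x$. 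As $e_{\bar x}$ is prime to $p$ by hypothesis while $[\kappa(x):\kappa(y)]_{\mathrm{insep}}$ is a power of $p$, this forces $[\kappa(x):\kappa(y)]_{\mathrm{insep}}=1$ and $p\nmid e_x$, i.e.\ $f$ is classically tamely ramified at $x$. Feeding this back into the previous paragraph finishes the proof. (One could instead run the whole argument over the perfect closure $k^{\mathrm{perf}}$, but since $\Pic(X)/2\Pic(X)\to\Pic(X_{k^{\mathrm{perf}}})/2\Pic(X_{k^{\mathrm{perf}}})$ need not be injective, one would still have to descend $R'$ to $X$, which comes down to exactly the same residue-field bookkeeping.)
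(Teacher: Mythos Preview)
Your argument is correct and follows exactly the paper's approach: write $\omega_{X/k}\cong f^*\omega_{\PP^1_k/k}\otimes\calO_X(R_f)$, observe that $f^*\omega_{\PP^1_k/k}$ is a square since $\omega_{\PP^1_k/k}\cong\calO_{\PP^1_k}(-2)$, and use tameness to see that $R_f=\sum_x(e_x-1)[x]$ is even. The paper simply asserts the formula $R_f=\sum_{x\in X^\circ}(e_x-1)[x]$ with a Stacks Project citation, whereas you supply the justification that tameness in the geometric sense forces the residue extensions $\kappa(x)/\kappa(y)$ to be separable (so that the different really is $e_x-1$); this is a point the paper glosses over, but your treatment of it is sound and the overall strategy is identical.
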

\begin{proof}
Write $\omega_{X/k}$ as the tensor product of the pullback $f^* \omega_{\PP^1_k/k}$ with
the determinant $\calO_X(K_{X/\PP^1})$ of the relative canonical sheaf. 
The former is a square because $\omega_{\PP^1_k/k} \cong \calO_{\PP_k^1}(-2)$;
since $f$ is tame, the latter is the square of $\calO_{X}(D)$ for (cf.\ \cite[Tag 0C1F]{SP})
\[
D = \sum_{x \in X^\circ} \tfrac{e_x - 1}{2} [x].
\]
Hence $\omega_X$ is itself a square.
\end{proof}

\begin{example} \label{exa:schroer}
Let $X$ be the generic $n$-pointed curve of genus $g$ for some integers $n \geq 0, g \geq 3$.
Then a theorem of Schr\"oer \cite[Theorem~5.1]{Sch03a} (the ``strong Franchetta conjecture'')
implies that $\omega_{X/k}$ is not a square in $\Pic(X)$. Consequently, Proposition~\ref{P:tame to square} implies that
$X$ admits no tame morphism to $\PP^1_k$.

By contrast, it was shown by Bouw--Wewers \cite[Theorem~1]{BW05} that (for arbitrary $p$) for $n \geq 3, g \geq 0$,
the generic $n$-pointed genus-$g$ curve does admit a tame morphism after base extension to an algebraically closed field. This statement is now subsumed by the result of Sugiyama--Yasuda, except that Bouw--Wewers obtain some additional control on the degree of the morphism and on the ramification indices; see \cite[Theorem~10]{BW05}. For example, for $p \neq 3$ one can ensure that the ramification indices are all equal to 1 or 3.
\end{example}

We record a reformulation of the condition that the canonical bundle is a square.
\begin{defn}
For $i$ a nonnegative integer and $U$ an open subset of $X$, let $U^{(i)}$ be the base extension of $U$ along the $i$-th power of the absolute Frobenius morphism on $k$.
Recall that the $i$-th power of the absolute Frobenius morphism on $X$ factors as the relative Frobenius
$\pi^{(i)}: X \to X^{(i)}$ followed by the base change morphism $X^{(i)} \to X$.
\end{defn}

\begin{defn}
For $p$ arbitrary, 
a \emph{theta characteristic} {(also known as a \emph{spin structure})} on $X$ is a line bundle $\calL$ on $X$ for which $\calL^{\otimes 2} \cong \omega_{X/k}$.
If such a bundle exists, then the set of isomorphism classes of theta characteristics form a torsor for $\Pic(X)[2]$.
		
For $p=2$, there exists a \emph{canonical theta characteristic} over $X^{(1)}$;
as in \cite{SV87}, it may be constructed by observing that for any $f \in k(X)$,
the divisor of $df$ becomes a square over $X^{(1)}$. Moreover, it is unique because for any $g \in R_X$ the ratio $\frac{df}{dg}$ is a square in $k(X)$.
\end{defn}

\begin{remark}
From the description of the canonical theta characteristic, it
is clear that the obstruction to tame morphisms described in Proposition~\ref{P:tame to square} 
vanishes upon base extension along Frobenius on $k$.
In particular, this obstruction vanishes whenever $k$ is perfect.
\end{remark}

\section{The Sugiyama-Yasuda symbol}
\label{sec:sugiyama-yasuda}

The breakthrough of Sugiyama--Yasuda rests on the remarkable invariant theory for the group $\PGL_2(k^{1/4} \cdot k(X))$ developed in \cite[\S 2]{SY18}, which will make it possible to study ``tameness modulo fourth powers''
starting in \S\ref{sec:pseudotame morphisms}. 
This can be viewed as a characteristic-2 analogue of the classical theory of the Schwarzian derivative
(see Remark~\ref{R:symbol}).
We give a detailed treatment here, both for expository purposes and to clarify the effect of working over an arbitrary (not necessarily algebraically closed or even perfect) base field. {Throughout this section we assume that $\mathrm{char}(k)=2$.}

\begin{defn}
Put $\Gamma := \PGL_2(k^{1/4} \cdot k(X))$. 
We write $\Gamma_k$ in place of $\Gamma$ when it becomes necessary to specify $k$ (e.g., when passing to a field extension).

Consider the following action of $\Gamma$ on $k(X)$ by linear fractional transformations:
\[
\begin{pmatrix} a & b \\
c & d \end{pmatrix}(f) = \frac{a^4 f + b^4}{c^4 f + d^4}.
\]
Note that the action on $R_X \cong k(X) \setminus k \cdot k(X)^2$
is free: a fixed point would correspond to a solution of the equation $a^4 f + b^4 = c^4 f^2 + d^4 f$, but since $f \notin k \cdot k(X)^2$ this would force $b=c=0$ and $a=d$.

Write $\Gamma f$ for the $\Gamma$-orbit of $f$; note that it contains both $f^{-1}$ and $f^3 = f^4 f^{-1}$.
\end{defn}

\begin{defn} \label{D:symbol}
For $g \in R_X$, note that $1,g,g^2,g^3$ form a basis\footnote{{Since $K(X)^4 \subseteq K(X)^2 \subseteq K(X)$, it is enough to check that $1,g$ is a basis of $K(X)^2 \subseteq K(X)$ which is clear as it is an extension of degree two and $1,g$ are linearly independent which can be seen by differentiating any linear relation thereof.}} of $k(X)$ over $k \cdot k(X)^4$ .
For $f,g \in R_X$, we may therefore write 
\begin{equation} \label{eq:f in terms of g}
f = f_0^4 + f_1^4 g + f_2^4g^2 + f_3^4 g^3 = (f_0^2 + f_2^2 g)^2 + (f_1^2 + f_3^2 g)^2 g \qquad (f_i \in k^{1/4} \cdot k(X)).
\end{equation}
Since $f_1^2 + f_3^2 g = \left( \frac{df}{dg} \right)^{1/2}$ is nonzero (as otherwise $f$ would lie in $k \cdot k(X)^2$), we may define the \emph{Sugiyama-Yasuda symbol} of $f$ and $g$ by the formula
\[
\SY(f,g) := \left( \frac{f_1 f_3 + f_2^2}{f_1^2 + f_3^2 g} \right)^2 \,dg \in \Omega_{k(X)/k}.
\]
(Note that $\SY$ is meant to abbreviate both ``Sugiyama-Yasuda'' and ``symbol''; see Remark~\ref{R:symbol}.)
\end{defn}

\begin{remark} \label{R:f in terms of g transformed}
The group $\Gamma$ is generated by the operations
\[
 g \mapsto g+1, \qquad g \mapsto g^{-1}, \qquad g \mapsto t^4g \qquad (t \in (k^{1/4} \cdot k(X))^\times).
\]
These operations have the following effects on \eqref{eq:f in terms of g}:
\begin{align*}
f &= (f_0 + f_1 + f_2 + f_3)^4 + (f_1 + f_3)^4 (g+1)  + (f_2 + f_3)^4 (g+1)^2 + f_3^4 (g+1)^3\\
&= f_0^4 + (f_3 g)^4 g^{-1} + (f_2 g)^4 g^{-2} + (f_1 g)^4 g^{-3} \\
&= f_0^4 + (t^{-1} f_1)^4 t^4 g + (t^{-2} f_2)^4 (t^4 g)^2 + (t^{-3} f_3)^4 (t^4 g)^3.
\end{align*}
In particular, by replacing $g$ with a suitable element in $\Gamma g$, we can first ensure that $f_3 \neq 0$
(as otherwise the nonvanishing of $f_1^2 + f_3^2 g$ forces $f_1 \neq 0$, and we may apply $g \mapsto g^{-1}$),
and then that $f_2 = 0$ (by first rescaling to achieve $f_2 = f_3$).
If we further replace $f$ with $f + f_0^4$ (thus
moving $f$ within $\Gamma f$), we then have $f/g = (f_1^2 + f_3^2 g)^2$ and so
\[
\SY(f,g) = \frac{(f_1 f_3)^2}{f/g}\,dg.
\]
\end{remark}

\begin{lemma} \label{L:symbol is invariant}
For $f,g \in R_X$, {we have that $\SY(f,g) = \SY(f, \gamma(g))$ for every $\gamma \in \Gamma$}.
\end{lemma}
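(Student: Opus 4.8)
The plan is to reduce to the three generators of $\Gamma$ recorded in Remark~\ref{R:f in terms of g transformed} — namely $g\mapsto g+1$, $g\mapsto g^{-1}$, and $g\mapsto t^4 g$ for $t\in(k^{1/4}\cdot k(X))^{\times}$ — and to check directly that $\SY(f,-)$ is unchanged under each; invariance under an arbitrary $\gamma\in\Gamma$ then follows by writing $\gamma$ as a word in the generators and composing. The input for the three computations is already laid out in Remark~\ref{R:f in terms of g transformed}, which rewrites the expansion \eqref{eq:f in terms of g} of $f$ in the basis $1,\gamma(g),\gamma(g)^2,\gamma(g)^3$; from it one reads off the new coefficients $\tilde f_0,\dots,\tilde f_3\in k^{1/4}\cdot k(X)$ (noting e.g.\ that $f_3 g\in k^{1/4}\cdot k(X)$, so that $\SY(f,\gamma(g))$ is defined), and then one compares the numerator $\tilde f_1\tilde f_3+\tilde f_2^2$, the denominator $\tilde f_1^2+\tilde f_3^2\gamma(g)$, and the differential $d\gamma(g)$ against the original data.

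For $g\mapsto g+1$ everything is formal in characteristic $2$: using $(f_1+f_3)^2=f_1^2+f_3^2$ one finds $\tilde f_1^2+\tilde f_3^2(g+1)=f_1^2+f_3^2 g$ and $\tilde f_1\tilde f_3+\tilde f_2^2=f_1 f_3+f_2^2$, and $d(g+1)=dg$, so the symbol is literally unchanged. For $g\mapsto g^{-1}$ one has $(\tilde f_1,\tilde f_2,\tilde f_3)=(f_3 g,\,f_2 g,\,f_1 g)$, so the numerator acquires a factor $g^2$ and the denominator a factor $g$; combined with $d(g^{-1})=g^{-2}\,dg$ (the sign vanishing in characteristic $2$) the net factor is $(g^{2}/g)^{2}\cdot g^{-2}=1$. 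For $g\mapsto t^4 g$ one has $(\tilde f_1,\tilde f_2,\tilde f_3)=(t^{-1}f_1,\,t^{-2}f_2,\,t^{-3}f_3)$, so the numerator scales by $t^{-4}$ and the denominator by $t^{-2}$; since $d(t^4 g)=t^4\,dg$, the net factor is $(t^{-4}/t^{-2})^{2}\cdot t^{4}=1$.

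The one step that genuinely needs care is this last differential identity: one must know that $d(t^4)=0$ in $\Omega_{k(X)/k}$, so that $d(t^4 g)=t^4\,dg$ with no extra term $g\,d(t^4)$. This holds because $t^4$ is a $k$-multiple of a fourth power — writing $t=a^{1/4}v$ with $a\in k$ and $v\in k(X)$ gives $t^4=av^4$, and $d(av^4)=4av^3\,dv=0$ — so the identity survives even when $k$ is imperfect. Everything else is routine arithmetic in characteristic $2$, and assembling the three cases completes the proof.
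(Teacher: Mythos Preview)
Your proof is correct and follows essentially the same approach as the paper's: both reduce to the three generators $g\mapsto g+1$, $g\mapsto g^{-1}$, $g\mapsto t^4 g$ from Remark~\ref{R:f in terms of g transformed} and verify invariance of $\SY(f,-)$ under each by direct computation of the new coefficients. Your explicit justification that $d(t^4)=0$ in $\Omega_{k(X)/k}$ (via $t^4\in k\cdot k(X)^4$) is a nice addition that the paper leaves implicit.
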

\begin{proof}
By Remark~\ref{R:f in terms of g transformed},
\begin{align*}
\SY(f,t^4g) &= \left( \frac{t^{-1} f_1 t^{-3} f_3 + t^{-4} f_2^2}{t^{-2} f_1^2 + t^{-6} f_3^{2} t^4 g} \right)^2 d(t^4 g) = \SY(f,g) \\
\SY(f,g+1) &= \left( \frac{(f_1 + f_3)f_3 + (f_2+f_3)^2}{(f_1 + f_3)^2 + f_3^2(g+1)} \right)^2 d(g+1) = \SY(f,g) \\
\SY(f,g^{-1}) &= \left( \frac{f_1 g f_3 g + f_2^2 g^2}{f_3^2 g^2 + f_1^2 g^2 g^{-1}} \right)^2 d(g^{-1}) = \SY(f,g).
\end{align*}
This establishes invariance under a generating set of $\Gamma$ and hence proves the claim.
\end{proof}

\begin{cor} \label{C:symbol is zero}
For $f,g \in R_X$, $\SY(f,g) = 0$ if and only if $g \in \Gamma f$.
\end{cor}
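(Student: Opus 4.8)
The plan is to establish both implications, relying almost entirely on Lemma~\ref{L:symbol is invariant} and Remark~\ref{R:f in terms of g transformed}. For the implication $g \in \Gamma f \Rightarrow \SY(f,g) = 0$, write $g = \gamma(f)$ with $\gamma \in \Gamma$; then by Lemma~\ref{L:symbol is invariant} (applied with second argument $f$), $\SY(f,g) = \SY(f,\gamma(f)) = \SY(f,f)$, so it suffices to compute $\SY(f,f)$ directly. Expanding $f$ in the basis $1, f, f^2, f^3$ of $k(X)$ over $k \cdot k(X)^4$ yields, in the notation of Definition~\ref{D:symbol}, the coefficients $f_0 = f_2 = f_3 = 0$ and $f_1 = 1$; hence $f_1^2 + f_3^2 f = 1 \neq 0$, the numerator $f_1 f_3 + f_2^2$ vanishes, and $\SY(f,f) = 0$.

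For the converse, suppose $\SY(f,g) = 0$. Using Lemma~\ref{L:symbol is invariant}, replace $g$ by the element $g' \in \Gamma g$ produced in Remark~\ref{R:f in terms of g transformed}, for which the expansion $f = f_0^4 + f_1^4 g' + f_2^4 (g')^2 + f_3^4 (g')^3$ satisfies $f_3 \neq 0$ and $f_2 = 0$; the symbol is unchanged, so $\SY(f,g') = \bigl( \tfrac{f_1 f_3}{f_1^2 + f_3^2 g'} \bigr)^2 dg' = 0$. Since $dg' \neq 0$ and $f_1^2 + f_3^2 g' \neq 0$, this forces $f_1 f_3 = 0$, hence $f_1 = 0$, and therefore $f = f_0^4 + f_3^4 (g')^3$. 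Now subtracting the fourth power $f_0^4$ — which is the effect on $f$ of an element of $\Gamma$ — gives $f' := f + f_0^4 = f_3^4 (g')^3$, an element of $\Gamma f$. On the other hand $(g')^3 \in \Gamma g'$ (as noted just after the definition of $\Gamma$), and scaling by $f_3^4$ with $f_3 \in (k^{1/4} \cdot k(X))^\times$ is again an element of $\Gamma$, so $f' = f_3^4 \cdot (g')^3 \in \Gamma g'$. Combining, $\Gamma f = \Gamma f' = \Gamma g' = \Gamma g$; in particular $g \in \Gamma f$.

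The one point I would watch carefully is that $\SY(f,g)$ is a priori invariant only under moving the second argument within its $\Gamma$-orbit, not the first. The proof above sidesteps this by postponing the single modification of $f$ — namely $f \mapsto f + f_0^4$ — until after the vanishing of the symbol has already been used to deduce $f_1 = 0$. An alternative would be to verify directly, as is implicit in Remark~\ref{R:f in terms of g transformed}, that $f \mapsto f + f_0^4$ leaves $\SY(f,g')$ unchanged once $f_2 = 0$, and then invoke the normal-form identity $\SY(f,g') = \tfrac{(f_1 f_3)^2}{f'/g'}\, dg'$ recorded there; that is the only genuinely computational step, and it is routine.
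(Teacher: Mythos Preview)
Your proof is correct and follows essentially the same route as the paper's: compute $\SY(f,f)=0$ directly and use invariance in the second argument for the forward direction, then normalize $g$ via Remark~\ref{R:f in terms of g transformed} to $f_2=0,\ f_3\neq 0$ and read off $f_1=0$ for the converse. Your extra paragraph carefully justifying that $f_0^4 + f_3^4(g')^3 \in \Gamma g'$ is more explicit than the paper's one-line assertion $f \in \Gamma g^3 = \Gamma g$, but the argument is the same.
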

\begin{proof}
For $f=g$, we have $f_0 = f_2 = f_3 = 0$ and so $\SY(f,g) = 0$. By Lemma~\ref{L:symbol is invariant}, 
it follows that if $g \in \Gamma f$, then $\SY(f,g) = 0$.

Conversely, suppose that $\SY(f,g) = 0$ and we wish to check that $f \in \Gamma g$. 
By Lemma~\ref{L:symbol is invariant}, both the hypothesis and the conclusion are preserved by moving $g$ within
$\Gamma g$, so by Remark~\ref{R:f in terms of g transformed} we may assume that $f_2 = 0, f_3 \neq 0$.
The vanishing of $\SY(f,g)$ implies that $f_1 f_3 + f_2^2 = 0$; we must then have $f_1 = 0$ in addition, and
so $f = f_0^4 + f_3^4 g^3 \in \Gamma g^3 = \Gamma g$.
\end{proof}

We come now to the most remarkable property of $\SY$.
\begin{lemma} \label{L:symbol is additive}
For $f,g,h \in R_X$,
\[
\SY(f,g) + \SY(g,h) = \SY(f,h).
\]
\end{lemma}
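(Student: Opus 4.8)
The plan is to reduce the cocycle identity $\SY(f,g)+\SY(g,h)=\SY(f,h)$ to a direct computation by using the invariance established in Lemma~\ref{L:symbol is invariant} to put $g$ and $h$ into a convenient normal form. By Remark~\ref{R:f in terms of g transformed}, both sides of the claimed identity are unchanged if we replace $g$ by any element of $\Gamma g$ and independently replace $h$ by any element of $\Gamma h$ (note that $\SY(f,g)$, $\SY(g,h)$, $\SY(f,h)$ each involve only the $\Gamma$-orbits of their two arguments in a way controlled by Lemma~\ref{L:symbol is invariant}, after checking invariance in the \emph{first} slot too — see below). So first I would reduce to the case where $h$ is in the normal form relative to $g$ coming from Remark~\ref{R:f in terms of g transformed}, i.e.\ writing $g = h_0^4 + h_1^4 h + h_3^4 h^3$ with $h_1, h_3$ nonzero and, after translating $g$ within $\Gamma g$, even $g/h = (h_1^2 + h_3^2 h)^2$, so that $\SY(g,h) = \frac{(h_1 h_3)^2}{g/h}\,dh$.

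The one technical point that must be addressed before the normalization argument goes through is invariance of $\SY$ in its \emph{first} argument — Lemma~\ref{L:symbol is invariant} only states invariance in the second slot. I would first record (it follows formally from Corollary~\ref{C:symbol is zero} plus what we are about to prove, but to avoid circularity I would instead verify it directly) that $\SY(\gamma(f), g) = \SY(f,g)$ for $\gamma \in \Gamma$; this is a short check on the three generators of $\Gamma$ acting on \eqref{eq:f in terms of g}, entirely parallel to the proof of Lemma~\ref{L:symbol is invariant}, since the action of $\Gamma$ on $f$ by linear fractional transformations mixes the coefficients $f_i$ by the same formulas appearing in Remark~\ref{R:f in terms of g transformed}. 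Alternatively, and perhaps more cleanly, I would prove the additivity identity \emph{first} in the special case $f = g$ or $h = f$ — where two of the three terms vanish by Corollary~\ref{C:symbol is zero} — and use the general identity to deduce $\SY(\gamma f, g) = \SY(f,g)$ afterward; but that risks a circular dependence, so I lean toward the direct generator check.

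With invariance in both slots in hand, the proof is a normal-form computation. Expand $f$ in the basis $1, g, g^2, g^3$ over $k \cdot k(X)^4$ as in \eqref{eq:f in terms of g}, giving $\SY(f,g) = \left(\frac{f_1 f_3 + f_2^2}{f_1^2 + f_3^2 g}\right)^2 dg$; similarly expand $f$ in the basis $1,h,h^2,h^3$ to get $\SY(f,h)$; and use the normalized expansion of $g$ in terms of $h$ for $\SY(g,h)$. Now substitute $g = h_0^4 + h_1^4 h + h_3^4 h^3$ (with $h_2=0$) into the expansion of $f$ in powers of $g$ and re-expand in powers of $h$: this expresses the $h$-coefficients of $f$ as explicit polynomial expressions in the $g$-coefficients $f_i$ of $f$ and the $h$-coefficients $h_j$ of $g$, using that in characteristic $2$ the map $x \mapsto x^4$ is additive and that $g^2 = h_0^8 + h_1^8 h^2 + h_3^8 h^6$, $g^3 = g \cdot g^2$. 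Then one checks that the resulting rational function, after forming $\SY(f,h)$, equals $\SY(f,g) + \SY(g,h)$ as differentials, using $dg = (h_1^4 + h_3^4 h^2)\,dh = (h_1^2 + h_3^2 h)^4 \cdot \frac{dh}{(h_1^2+h_3^2 h)^2}$ — wait, more precisely $dg = (h_1^4 + h_3^4 h^2) dh$ since $d(h^2)=0$ — to convert $dg$ into a multiple of $dh$. The main obstacle is purely organizational: managing the substitution $g \mapsto$ (cubic in $h$) inside a cubic in $g$ produces a degree-$9$ expression in $h$ that must be folded back into the basis $1,h,h^2,h^3$ using the cubic relation for $h^3$ repeatedly, and then the two sides compared as elements of $\Omega_{k(X)/k}$; I expect the characteristic-$2$ additivity of fourth powers to collapse enough cross terms that the identity drops out, but verifying this cleanly — ideally by a slick choice of normal form for $f$ as well (e.g.\ arranging $f_2 = 0$, $f_3 \neq 0$ relative to $g$ as in Remark~\ref{R:f in terms of g transformed}) so that $\SY(f,g) = \frac{(f_1f_3)^2}{f/g}\,dg$ has a simple shape — is where the real care is needed.
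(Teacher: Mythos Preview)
Your overall plan—normalize via $\Gamma$-moves and then carry out the substitution—is exactly the paper's, and you are right to flag a circularity hazard: moving $g$ within $\Gamma g$ (to force $f_2=0$ in the $g$-expansion of $f$, as in your ``slick choice of normal form for $f$'') places $g$ in the \emph{first} slot of $\SY(g,h)$, and first-slot invariance is, in Theorem~\ref{T:symbol}, only deduced \emph{from} the present lemma via the symmetry $\SY(f,g)=\SY(g,f)$. The paper makes the same move and glosses over this point.

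Your proposed patch, however, does not work as written. The formulas in Remark~\ref{R:f in terms of g transformed} record how the coefficients $f_i$ change when \emph{$g$} is transformed, not when $f$ is; the generator $f\mapsto f^{-1}$ does \emph{not} act on $(f_0,f_1,f_2,f_3)$ by anything like the simple permutation that $g\mapsto g^{-1}$ induces, so that case is not ``entirely parallel'' to Lemma~\ref{L:symbol is invariant}. (Translation $f\mapsto f+c^4$ and scaling $f\mapsto t^4 f$ are easy first-slot checks; inversion is not.) The clean fix is to normalize \emph{less}: move only $h$ within $\Gamma h$ (legitimate by Lemma~\ref{L:symbol is invariant}) to arrange $g_2=0$ in the $h$-expansion of $g$, and leave $f,g$ alone. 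With $r=g_1^2+g_3^2 h$ and $F_i$ the $h$-coefficients of $f$, the substitution gives
\[
F_1F_3+F_2^2 = g_1 g_3\,(f_1^2+f_3^2 g) + r^2(f_1f_3+f_2^2), \qquad F_1^2+F_3^2 h = r\,(f_1^2+f_3^2 g),
\]
so $\SY(f,h)=(g_1g_3/r)^2\,dh + \bigl((f_1f_3+f_2^2)/(f_1^2+f_3^2 g)\bigr)^2 r^2\,dh=\SY(g,h)+\SY(f,g)$ with no hypothesis on $f_2$; the paper's extra normalization $f_0=f_2=0$ merely zeroes one term and is not needed.
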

\begin{proof}
When computing $\SY(f,g)$, Lemma~\ref{L:symbol is invariant}
asserts that we are free to move $g$ within $\Gamma g$, and Remark~\ref{R:f in terms of g transformed}
asserts that by so doing (and translating $f$ by a fourth power) we can ensure that $f \in k \cdot k(X)^2 g$. 
By the same logic, we may
move $h$ within $\Gamma h$ while fixing $f$ and $g$, so as to ensure that $g \in k \cdot k(X)^4 + k \cdot k(X)^2 h$ (however, we cannot move $g$ without disturbing our assumption about $f$).
Writing $r$ for $g_1^2 + g_3^2 h = (dg/dh)^{1/2}$, we have
\begin{align*}
g &= g_0^4 + r^2 h = g_0^4 + (g_1^2 + g_3^2 h)^2 h \\
f &= (f_1^2 + f_3^2 g)^2 g \\
&= (f_1^2 + f_3^2 (g_0^4 + r^2 h))^2 (g_0^4 + g_1^4 h + g_3^4 h^3) \\
&= ((f_1 + f_3 g_0^2)^4 + (f_3 r)^4 h^2) (g_0^4 + g_1^4 h + g_3^4 h^3)\\
&= (f_1 g_0 + f_3 g_0^3)^4 + (f_1 g_1 + f_3 g_0^2 g_1 + f_3 g_3 r h)^4 h + (f_3 g_0 r)^4 h^2 + (f_1 g_3 + f_3 g_0^2 g_3
+ f_3 g_1 r)h^3.
\end{align*}
By writing $df/dh$ as $(df/dg)(dg/dh) = r^2 df/dg$, we compute that
\begin{align*}
\SY(f,g) &= \frac{f_1^2 f_3^2}{df/dg} dg = \frac{f_1^2 f_3^2 r^2}{r^2 (df/dg)} dg = \frac{f_1^2 f_3^2 r^4}{r^2 (df/dg)} dh \\
\SY(f,h) &= \frac{(f_1 g_1 + f_3 g_0^2 g_1 + f_3 g_3 r h)^2 (f_1 g_3 + f_3 g_0^2 g_3 + f_3 g_1 r)^2 + (f_3 g_0 r)^4}{r^2 (df/dg)} {dh}\\
\SY(f,g) + \SY(f,h) &= \frac{g_1^2 g_3^2 (f_1^2 + f_3^2 (g_0^4 + r^2 h))^2}{r^2 (df/dg)} {dh} \\
&= \frac{g_1^2 g_3^2}{dg/dh} {dh} = \SY(g,h)
\end{align*}
as desired.
\end{proof}

Putting everything together, we have the following statement which includes \cite[Propositions 2.7, 2.8, 2.9]{SY18}
(with the same proofs up to cosmetic differences).
\begin{theorem}[Sugiyama-Yasuda] \label{T:symbol}
The map $\SY: R_X \times R_X \to \Omega_{k(X)/k}$ has the following properties.
\begin{enumerate}
\item[(a)]
It is $\Gamma$-equivariant in each argument and symmetric in the two arguments.
\item[(b)]
For $f,g \in R_X$, $\SY(f,g) = 0$ if and only if $f,g$ belong to the same $\Gamma$-orbit.
\item[(c)]
For $f,g,h \in R_X$, $\SY(f,g) + \SY(g,h) + \SY(h,f) = 0$.
\end{enumerate}
\end{theorem}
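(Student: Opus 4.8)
The plan is to observe that Theorem~\ref{T:symbol} is essentially a bookkeeping repackaging of the lemmas already proved in this section, so the proof amounts to matching each item against the right lemma and filling in the two symmetry statements that have not yet been addressed explicitly.

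First I would dispatch part (b): this is exactly Corollary~\ref{C:symbol is zero}, modulo the cosmetic point that the corollary phrases the condition as $g \in \Gamma f$ whereas (b) says ``$f,g$ lie in the same $\Gamma$-orbit''; these are the same assertion since $\Gamma$-orbits partition $R_X$. Part (c) is likewise immediate from additivity: by Lemma~\ref{L:symbol is additive} we have $\SY(f,g) + \SY(g,h) = \SY(f,h)$, and once symmetry in the two arguments is known, $\SY(f,h) = \SY(h,f)$, so rearranging gives $\SY(f,g) + \SY(g,h) + \SY(h,f) = 0$. For part (a), $\Gamma$-equivariance in the second argument is precisely Lemma~\ref{L:symbol is invariant} (the statement $\SY(f,g) = \SY(f,\gamma(g))$ says the symbol is \emph{invariant} under the $\Gamma$-action on the second slot, which is equivariance for the trivial action on the target $\Omega_{k(X)/k}$); equivariance in the first argument will follow once symmetry is established, again via Lemma~\ref{L:symbol is invariant}.

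So the one genuine thing left to prove is the symmetry $\SY(f,g) = \SY(g,f)$. I would deduce this from additivity by a standard trick: apply Lemma~\ref{L:symbol is additive} with the three arguments taken to be $f, g, f$, giving $\SY(f,g) + \SY(g,f) = \SY(f,f)$, and then note $\SY(f,f) = 0$ since when $g = f$ we have $f_0 = f_2 = f_3 = 0$, $f_1 = 1$, so the symbol vanishes (this is already recorded inside the proof of Corollary~\ref{C:symbol is zero}). Since we are in characteristic $2$, $\SY(f,g) + \SY(g,f) = 0$ is the same as $\SY(f,g) = \SY(g,f)$. One caveat to flag: Lemma~\ref{L:symbol is additive} is stated for $f,g,h \in R_X$ with no distinctness hypothesis, so taking $h = f$ is legitimate; if one were worried, one could instead run the additivity computation directly, but there is no need.

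The main obstacle here is essentially nil — all the substance lives in Lemmas~\ref{L:symbol is invariant}, \ref{L:symbol is additive} and Corollary~\ref{C:symbol is zero}, which have already been proved. The only thing requiring any care is getting the logical order right: symmetry must be extracted from additivity (via the $\SY(f,f)=0$ specialization) \emph{before} one claims equivariance in the first argument and before one rewrites (c) in its symmetric form, since equivariance in the first slot is not proved independently but is inferred from second-slot equivariance together with symmetry. I would therefore present the proof in the order (b), then second-argument equivariance, then symmetry, then first-argument equivariance, then (c).

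\begin{proof}
Part (b) is Corollary~\ref{C:symbol is zero}: $\SY(f,g) = 0$ if and only if $g \in \Gamma f$, which is to say that $f$ and $g$ lie in the same $\Gamma$-orbit.

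For part (a), invariance of $\SY(f,g)$ under moving $g$ within its $\Gamma$-orbit is Lemma~\ref{L:symbol is invariant}; since $\Gamma$ acts trivially on $\Omega_{k(X)/k}$, this is the asserted $\Gamma$-equivariance in the second argument. Next we check symmetry. Taking $h = f$ in Lemma~\ref{L:symbol is additive} gives
\[
\SY(f,g) + \SY(g,f) = \SY(f,f),
\]
and $\SY(f,f) = 0$ because, writing $f$ in terms of $g = f$ as in \eqref{eq:f in terms of g}, one has $f_0 = f_2 = f_3 = 0$, so the numerator $f_1 f_3 + f_2^2$ of the symbol vanishes. Since $\mathrm{char}(k) = 2$, this yields $\SY(f,g) = \SY(g,f)$. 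Combining symmetry with the already-established equivariance in the second argument gives equivariance in the first argument as well, completing part (a).

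Finally, for part (c), Lemma~\ref{L:symbol is additive} gives $\SY(f,g) + \SY(g,h) = \SY(f,h)$, and by the symmetry just proved $\SY(f,h) = \SY(h,f)$; adding $\SY(h,f)$ to both sides and using $\mathrm{char}(k) = 2$ gives $\SY(f,g) + \SY(g,h) + \SY(h,f) = 0$.
\end{proof}
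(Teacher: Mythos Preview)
Your proof is correct and follows essentially the same route as the paper's: both establish (b) via Corollary~\ref{C:symbol is zero}, deduce symmetry by specializing Lemma~\ref{L:symbol is additive} to $h=f$ together with $\SY(f,f)=0$, combine this with Lemma~\ref{L:symbol is invariant} to get (a), and then read (c) off from Lemma~\ref{L:symbol is additive} plus symmetry. The only cosmetic difference is that the paper invokes (b) to conclude $\SY(f,f)=0$ whereas you compute it directly from the expansion, but as you note this computation already appears inside the proof of Corollary~\ref{C:symbol is zero}.
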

\noindent {Note that, since $\mathrm{char}(k)=2$, a pairing is symmetric if and only if it is anti-symmetric.}
\begin{proof}
We first observe that part (b) is just a restatement of Corollary~\ref{C:symbol is zero}.
Given (b), Lemma~\ref{L:symbol is additive} implies that
\[
\SY(f,g) + \SY(g,f) = \SY(f,f) = 0;
\]
together with Lemma~\ref{L:symbol is invariant}, this implies (a).
Given (a), {the condition} (c) is equivalent to Lemma~\ref{L:symbol is additive}.
\end{proof}

\begin{remark} \label{R:torsor}
Suppose that $k$ is algebraically closed. As observed in \cite[Lemma~3.3]{SY18}, for fixed $g$ and $a$,
the equation $\SY(f,g) = \,da$ is quadratic in $f_1, f_2, f_3$, and so by Tsen's theorem has a nonzero solution.
Consequently, the morphism $\SY(-, g) \colon R_X/\Gamma \to \Omega_{k(X)/k}$ is surjective. It is also injective by Corollary~\ref{C:symbol is zero}, and hence we may upgrade Theorem~\ref{T:symbol} to assert that $\SY$ equips $R_X/\Gamma$ with the structure of a $\Omega_{k(X)/k}$-torsor.
We will return to this point in \S\ref{sec:conic bundle}.
\end{remark}

\begin{remark} \label{R:symbol}
Our use of the term \emph{symbol} in reference to $\SY$ is meant to suggest the possibility of a conceptual interpretation for this construction, e.g., in terms of algebraic $K$-theory. While we do not have such an interpretation in mind, it is natural to look for one using the following observation of Yuichiro Hoshi,
spelled out in more detail (and put into a geometric framework) in \cite{Hos19}.

In complex function theory, the \emph{Schwarzian derivative} of a pair $f,g$ is defined (as in \cite[Chapter~10]{Hil76}; see also \cite{OT09}) as
\[
\{f,g\} = \frac{d}{dg} \left( \frac{d^2f/dg^2}{df/dg} \right) - \frac{1}{2} \left(\frac{d^2f/dg^2}{df/dg} \right)^2.
\]
The associated quadratic differential $\{f, g\} (dg)^2$ is known to have strong algebraic properties:
it is antisymmetric in the two arguments and invariant under linear fractional transformations on either side,
vanishes if and only if $f$ is a linear fractional transformation of $g$, and satisfies
\[
\{f,h\}(dh)^2 = \{f,g\}(dg)^2 + \{g,h\}(dh)^2.
\]
If one divides by 2, reduces modulo 2, and takes the square root
(interpreting $\tfrac{1}{2} d^2f / dg^2$ appropriately in terms of the Cartier operator),
one recovers precisely the definition of $\SY(f,g)$
and the properties of $\SY(f,g)$ according to Theorem~\ref{T:symbol}.
\end{remark}

\section{Pseudotame and tame morphisms}
\label{sec:pseudotame morphisms}

Throughout \S\ref{sec:pseudotame morphisms}, assume that $k$ is perfect.
With the properties of the SY symbol in mind, we now introduce a key relaxation of the definition of a tame morphism: the notion of a \emph{pseudotame} morphism in the sense of Sugiyama--Yasuda. This can be thought of as the condition that a morphism is tame ``up to fourth powers'', or more precisely ``up to $\Gamma$-equivalence''.

\begin{defn} \label{D:pseudotame}
For $f \in R_X$ carrying $x$ to $y \in \PP^1_k$,
choose a uniformizer $t$ of $\PP^1_k$ at $y$.
Following \cite[Definition~2.1]{SY18}, we say that $f$ is \emph{pseudotame} at $x$
if there exists an element $h \in k(X)$ such that $v_x(f^*t + h^4)$ is odd.
This definition has the following properties.
\begin{itemize}
\item
If $f$ is tame at $x$, then evidently $f$ is pseudotame at $x$.
\item
If $u$ is a uniformizer of $X$ at $x$ and we write
$f^* t  = a_1 u + a_2 u^2 + \cdots$, then $f$ is pseudotame at $x$ if and only if the first index {$i\geq 1$ with $a_i \neq 0$ and $4 \!\not|\ i$ is an odd number}.

\item
In the previous point, the index $i$ does not depend on the choice of $t$
and is invariant under base extension.
Consequently, the pseudotame condition  at $x$ is independent of the choice of $t$,
and is invariant under base extension in the following sense: if
$k'$ is a field extension of $k$ and $x' \in X'$ lies over $x$, then $f$ is pseudotame at $x$ if and only if 
the induced map $f: X' \to \PP^1_{k'}$ is pseudotame at $x'$.
\end{itemize}
For $U \subseteq X$ an open subscheme, we say that $f$ is \emph{pseudotame} on $U$ if it is pseudotame at every point of $U$ ; for $U = X$, we simply say that $f$ is \emph{pseudotame}. 
\end{defn}

\begin{lemma} \label{L:pseudotame to uniformizer}
An element $f \in R_X$ is pseudotame at $x \in X^\circ$ if and only $\Gamma f$
contains an element which is a uniformizer at $x$. In particular, this property is $\Gamma$-invariant.
\end{lemma}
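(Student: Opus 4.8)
The plan is to prove both directions by reducing to the characterization of pseudotameness in terms of the expansion $f^*t = a_1 u + a_2 u^2 + \cdots$ given in Definition~\ref{D:pseudotame}. Throughout, fix a uniformizer $u$ of $X$ at $x$; note that since $k$ is perfect, every element of $k(X)$ can be adjusted by a fourth power of an element of $k(X)$ to kill any term $a_i u^i$ with $4 \mid i$, because $a_i u^i = (a_i^{1/4} u^{i/4})^4$. This is the key flexibility that makes the SY-symbol formalism interact well with the valuation at $x$.

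First I would prove the ``if'' direction. Suppose $\Gamma f$ contains an element $g$ which is a uniformizer at $x$, i.e. $v_x(g) = 1$. A uniformizer is obviously pseudotame at $x$ (take $h = 0$ in Definition~\ref{D:pseudotame}, or note $a_1 \neq 0$ so the first relevant index is $1$). So it suffices to observe that pseudotameness at $x$ is a $\Gamma$-invariant property. This is essentially bookkeeping: the generators of $\Gamma$ are $g \mapsto g+1$, $g \mapsto g^{-1}$, and $g \mapsto t^4 g$ for $t \in (k \cdot k(X))^\times$ (here $k^{1/4} = k$ since $k$ is perfect). One checks case by case how each generator affects the expansion near $x$. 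Translation $g \mapsto g+1$ only changes $a_0$, hence does not change the first index $i \geq 1$ with $a_i \neq 0$ and $4 \nmid i$. Scaling $g \mapsto t^4 g$: if $t$ is a unit at $x$ this multiplies each $a_i$ by the fourth power of a unit and changes nothing; if $t$ has a pole or zero at $x$, the expansion is shifted by $4 v_x(t)$, which does not change $i \bmod 4$ and in particular preserves whether $i$ is odd. The inversion $g \mapsto g^{-1}$ is the only subtle case: if $v_x(g) = m \neq 0$ then $v_x(g^{-1}) = -m$, and one must check that the pseudotameness condition — phrased via the first index $i$ with $4 \nmid i$ and $a_i \neq 0$ in the local expansion, allowing adjustment by fourth powers — is symmetric under $g \leftrightarrow g^{-1}$. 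I would argue this by writing $g = u^m \cdot (\text{unit})$, expanding, and using that modulo fourth powers of $k(X)$ one may normalize $g$ so that its relevant ``leading behavior'' is visibly the same as that of $g^{-1}$; alternatively, combine the first two generators to assume $v_x(g) \in \{m\}$ with $g$ having a clean form and compute directly. Since the linear fractional action of $\Gamma$ has coefficients that are fourth powers, everything stays compatible with the mod-fourth-power normalization.

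For the ``only if'' direction, suppose $f$ is pseudotame at $x$, so in the expansion $f^* t = a_1 u + a_2 u^2 + \cdots$ (we may assume $a_0 = 0$ after translating $f$ by a constant, which is a $\Gamma$-move) the first index $i$ with $a_i \neq 0$ and $4 \nmid i$ is odd, say $i = 2\ell+1$. Using that $k$ is perfect, subtract off all terms $a_j u^j$ with $j < i$ and $4 \mid j$; each such term is a fourth power in $k(X)$ (as $a_j u^j = (a_j^{1/4} u^{j/4})^4$), so this replaces $f$ by an element of $\Gamma f$ (translation by a fourth power is in $\Gamma$). Now $f^* t$ has valuation exactly $i = 2\ell+1$ at $x$. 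Finally apply the $\Gamma$-element $g \mapsto t_0^4 g$ suitably, or rather: since $v_x(f^*t)$ is odd and $X$ is a smooth curve, I want to produce within $\Gamma f$ an element of valuation $1$. Here I would use that $f^* t = (\text{unit at } x)\cdot u^{2\ell+1}$, and — again exploiting perfectness — extract a fourth-root-adjusted scaling: one can find $s \in k(X)^\times$ with $v_x(s) = -\ell/2 \cdot \dots$ — more carefully, one multiplies by $s^4$ where $v_x(s^4) = -(i-1)$ is divisible by $4$ iff $i \equiv 1 \pmod 4$, but in general $i-1 \equiv 0 \pmod 2$ only; so instead I would use the inversion/translation generators together with scalings to march the valuation down. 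The cleanest route: because $i$ is odd, $\gcd(i, 4) = 1$, so some combination of the operations ``multiply by a fourth power'' (shifting valuation by multiples of $4$) and ``invert'' (negating valuation) — which generate on valuations the subgroup of $\ZZ$ generated by $4$ and $-1$, namely all of $\ZZ$, acting on the coset $i + 4\ZZ$ together with its negative — lets us reach valuation exactly $\pm 1$; composing with inversion if needed gives valuation $1$. At each stage, the perfectness of $k$ and the fourth-power coefficients in the $\Gamma$-action guarantee we can clean up intervening $4 \mid j$ terms so that the ``leading'' valuation is genuinely controlled. Tracking this carefully yields an element of $\Gamma f$ that is a uniformizer at $x$.

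The main obstacle I expect is the inversion step, in both directions: understanding precisely how $v_x$ and the mod-fourth-power normal form of $g$ transform under $g \mapsto g^{-1}$ when $v_x(g) \neq 1$, and checking this is exactly what preserves (resp. achieves) the oddness of the relevant index. Everything else — translations, unit scalings, killing $4\mid j$ terms — is routine given that $k$ is perfect. I would organize the argument so that the inversion is only ever applied after normalizing $g$ to a monomial-times-unit form, which should make the valuation computation transparent.
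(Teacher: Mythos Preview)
Your outline matches the paper's strategy, but the inversion step you flag as ``the main obstacle'' is exactly where the paper does something cleaner. Rather than verifying the generator $f \mapsto f^{-1}$ directly, the paper checks that pseudotameness is preserved by $f \mapsto a^4 f + b^4$ (immediate: replace $h$ by $ah+b$, using $(ah+b)^4 = a^4 h^4 + b^4$ in characteristic $2$) and by $f \mapsto f^3$; since $f^{-1} = f^{-4} \cdot f^3$, these two suffice for $\Gamma$-invariance. The cubing case is a short computation the paper leaves implicit: if $v_x(f + h^4) = m$ is odd, write $f = h^4 + g$ with $v_x(g) = m$; then
\[
f^3 + (h^3)^4 = h^8 g + h^4 g^2 + g^3,
\]
with term valuations $8v_x(h)+m$, $4v_x(h)+2m$, $3m$. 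The even middle valuation can never be the strict minimum (that would force both $m < 4v_x(h)$ and $4v_x(h) < m$), and the two odd ones cannot coincide since $m$ is odd; hence $v_x(f^3 + h^{12})$ is odd. This replaces your open-ended analysis of $g \mapsto g^{-1}$.

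Your ``only if'' direction is also the paper's argument, just stated more crisply there: once you have $f_1 \in \Gamma f$ of odd valuation (obtained by a single translation by $h^4$, not by iteratively killing individual terms), either $f_1$ or $f_1^{-1}$ has valuation congruent to $1 \pmod 4$, and then one multiplication by a suitable fourth power brings the valuation to exactly $1$. No intermediate cleanup is needed.
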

\begin{proof}
The pseudotame property is preserved by the operations
$f \mapsto a^4 f + b^4$ (as this has an obvious effect on the element $h$)
and $f \mapsto f^3$; consequently, it is $\Gamma$-invariant.
Since a uniformizer at $x$ is obviously pseudotame at $x$,
by Remark~\ref{R:f in terms of g transformed}
any element in its $\Gamma$-orbit is likewise.

Conversely, suppose that $f \in R_X$ is pseudotame. By translating by the element $h$ from Definition~\ref{D:pseudotame},
we obtain an element $f_1 \in \Gamma f$ with odd order at $x$. By taking either $f_1$ or $1/f_1$, we find an element $f_2 \in \Gamma f$ with order at $x$ congruent to 1 mod 4. By multiplying by a suitable fourth power, we find an element $f_3 \in \Gamma f$ with order 1 at $x$, as desired.
\end{proof}

Since pseudotameness is a $\Gamma$-invariant property, it is natural to ask how it is expressed in terms of symbols.
The answer, reproduced here from \cite[Theorem~2.10]{SY18}, turns out to be quite simple.
\begin{lemma} \label{L:pseudotame and regular}
For $x \in X$, suppose that $f, g \in R_X$ are such that $g$ is pseudotame at $x$.
Then $f$ is pseudotame at $x$ if and only if $\SY(f,g)$ is regular at $x$ {(meaning that it belongs to $(\Omega_{k(X),k})_x$)}.
\end{lemma}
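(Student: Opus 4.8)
The plan is to reduce everything to an explicit local computation, using the additivity of $\SY$ to normalize the situation as much as possible. By Lemma~\ref{L:symbol is additive} we have $\SY(f,g) = \SY(f,g_0) + \SY(g_0,g)$ for any auxiliary $g_0 \in R_X$, and by Lemma~\ref{L:symbol is invariant} we may move $g$ freely within its $\Gamma$-orbit. Since $g$ is assumed pseudotame at $x$, Lemma~\ref{L:pseudotame to uniformizer} lets us replace $g$ by an element of $\Gamma g$ that is a genuine uniformizer at $x$; this replaces $\SY(f,g)$ by an equal differential, so without loss of generality $g$ is a uniformizer at $x$. Now $dg$ is a generator of $\Omega_{k(X)/k}$ near $x$, so ``$\SY(f,g)$ is regular at $x$'' just means that the coefficient $\left(\frac{f_1f_3 + f_2^2}{f_1^2 + f_3^2 g}\right)^2$, expanded using the basis $1,g,g^2,g^3$ of $k(X)$ over $k\cdot k(X)^4$ from \eqref{eq:f in terms of g}, has nonnegative valuation at $x$.

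Next I would set up coordinates cleanly using Remark~\ref{R:f in terms of g transformed}: after moving $g$ within $\Gamma g$ (keeping it a uniformizer, which the operations $g \mapsto t^4 g$ with $t$ a unit, $g\mapsto g+1$ and finitely many applications allow, at the cost of tracking how the $f_i$ transform — in fact the only dangerous operation is $g\mapsto g^{-1}$, which destroys the uniformizer property, so one must be a little careful, but the normalization $f_2 = 0$, $f_3 \neq 0$ described there can still be achieved while keeping $v_x(g)=1$ as long as one checks the inversion step is unnecessary, which it is once $f_3 \neq 0$) and translating $f$ by a fourth power, we arrive at $f/g = (f_1^2 + f_3^2 g)^2$ and $\SY(f,g) = \frac{(f_1f_3)^2}{f/g}\,dg$. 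Thus $\SY(f,g)$ is regular at $x$ iff $v_x\!\big((f_1 f_3)^2\big) \geq v_x(f/g) = v_x(f) - 1$, i.e. $2v_x(f_1) + 2v_x(f_3) \geq v_x(f) - 1$. Meanwhile $f = f_0^4 + f_1^4 g + f_3^4 g^3$ (recall $f_2 = 0$), and since $g$ is a uniformizer the three exponents $4v_x(f_0)$, $4v_x(f_1) + 1$, $4v_x(f_3) + 3$ are pairwise incongruent mod $4$, so no cancellation of leading terms is possible and $v_x(f) = \min\{4v_x(f_0),\, 4v_x(f_1)+1,\, 4v_x(f_3)+3\}$.

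The remaining step is a small case analysis on which of these three quantities is the minimum, combined with the criterion for pseudotameness of $f$ itself. By Lemma~\ref{L:pseudotame to uniformizer} (or directly from Definition~\ref{D:pseudotame}, in the normalized form $f = f_0^4 + f_1^4 g + f_3^4 g^3$ with $f/g$ having the shape above), $f$ is pseudotame at $x$ iff, after deleting the fourth-power term $f_0^4$, the resulting element $f_1^4 g + f_3^4 g^3$ has odd valuation, i.e. iff $\min\{4v_x(f_1) + 1,\, 4v_x(f_3) + 3\}$ is odd — which it always is! So one has to be more careful: the point is that after subtracting $f_0^4$ we may still subtract further fourth powers $h^4$, and pseudotameness is the assertion that the smallest valuation \emph{not} divisible by $4$ appearing in $f^*t$ is odd. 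I would phrase this as: $f$ is pseudotame at $x$ iff $\min\{4v_x(f_1)+1,\,4v_x(f_3)+3\} < 4v_x(f_0)$ is impossible to improve by a fourth-power translation — equivalently, writing $m = \min\{4v_x(f_1)+1, 4v_x(f_3)+3\}$, the condition is $m \equiv 1 \pmod 4$, i.e. $4v_x(f_1) + 1 \leq 4v_x(f_3) + 3$, i.e. $v_x(f_1) \leq v_x(f_3)$. Granting this, the equivalence to be proved becomes the elementary inequality: $v_x(f_1) \leq v_x(f_3)$ holds iff $2v_x(f_1) + 2v_x(f_3) \geq \min\{4v_x(f_0),\,4v_x(f_1)+1,\,4v_x(f_3)+3\} - 1$, which one checks by splitting on whether $4v_x(f_0)$ is or is not the minimum and on the sign of $v_x(f_1) - v_x(f_3)$.

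\medskip

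\noindent\textbf{Where the difficulty lies.} The conceptual content — additivity plus the normal form — is handed to us by Theorem~\ref{T:symbol} and Remark~\ref{R:f in terms of g transformed}, so the proof is essentially a bookkeeping exercise. The one genuinely delicate point, and the step I expect to consume the most care, is matching up the two descriptions of pseudotameness: the valuation-theoretic one in Definition~\ref{D:pseudotame} (``the smallest valuation not divisible by $4$ is odd'', which involves an optimization over fourth-power translations) versus the orbit description in Lemma~\ref{L:pseudotame to uniformizer}, and then translating this into a clean inequality among $v_x(f_0), v_x(f_1), v_x(f_3)$ in the normalized coordinates. One must be scrupulous that the $\Gamma$-moves used to normalize $g$ do not secretly require the inversion $g \mapsto g^{-1}$ (which would break $v_x(g) = 1$); tracking this, and confirming that the normal form $f_2 = 0$, $f_3 \neq 0$ is attainable within the subgroup of $\Gamma$ fixing the uniformizer property, is the real obstacle. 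Everything after that is a finite case check.
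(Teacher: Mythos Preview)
Your identification of the obstacle is correct, but the obstacle is insurmountable: the normal form $f_2 = 0$ cannot be attained while keeping $g$ a uniformizer at $x$, precisely in the non-pseudotame case. First, note that $g \mapsto g+1$ already destroys the uniformizer property (if $v_x(g)=1$ then $v_x(g+1)=0$), so the subgroup of $\Gamma$ preserving ``$g$ is a uniformizer at $x$'' is much smaller than you suggest. More decisively, suppose you \emph{had} achieved $f_2 = 0$ with $g$ a uniformizer. Then $f - f_0^4 = f_1^4 g + f_3^4 g^3 \in \Gamma f$ has valuation $\min(4v_x(f_1)+1,\,4v_x(f_3)+3)$, which is odd; hence $f - f_0^4$ is tame at $x$ and $f$ is pseudotame. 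In other words, the normalization $f_2 = 0$ with $g$ a uniformizer already \emph{forces} pseudotameness, making your converse direction vacuous. Your own final inequality confirms this: $2v_x(f_1)+2v_x(f_3) \ge \min(4v_x(f_1)+1,\,4v_x(f_3)+3)-1$ holds for all integer values of $v_x(f_1)$ and $v_x(f_3)$, so in your setup $\SY(f,g)$ would always come out regular.

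The paper's proof resolves this by moving $f$ within $\Gamma f$ rather than moving $g$. Once $g$ is fixed as a uniformizer, $\Gamma$-invariance in the \emph{first} argument of $\SY$ (Theorem~\ref{T:symbol}(a)) lets one normalize $f$ instead. If $f$ is pseudotame, take $f$ itself to be a uniformizer; then all the $f_i$ are regular at $x$ and $f_1^2 + f_3^2 g = (df/dg)^{1/2}$ is a unit, so $\SY(f,g)$ is visibly regular. If $f$ is not pseudotame, move $f$ within $\Gamma f$ so that $v_x(f)=2$; one then reads off $v_x(f_2)=0$, $v_x(f_1) \ge 1$, $v_x(f_3)\ge 0$, whence $f_1 f_3 + f_2^2$ is a unit while $f_1^2 + f_3^2 g$ vanishes at $x$, giving $\SY(f,g)$ a pole of positive odd order. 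The moral: the freedom in Remark~\ref{R:f in terms of g transformed} should be spent on $f$, not on $g$.
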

\begin{proof}
By Theorem~\ref{T:symbol} and Lemma~\ref{L:pseudotame to uniformizer}, we are free to move $f$ and $g$ within their respective $\Gamma$-orbits
(as both sides of the desired equivalence are preserved).
In particular, we may assume that $g$ is in fact a uniformizer at $x$.

If $f$ is pseudotame at $x$, then we may also assume that it is a uniformizer at $x$;
then the terms $f_0, f_1, f_2, f_3$ in  \eqref{eq:f in terms of g} are regular at $x$,
and $f_1^2 + f_3^2 g$ is nonzero at $x$. It follows that $\SY(f,g)$ is regular at $x$.

Conversely, suppose that $f$ is not pseudotame at $x$. We may then assume that $v_x(f) = 2$,
in which case $v_x(f_1) \geq 1$, $v_x(f_2) = 0$, $v_x(f_3) \geq 0$. Then
\[
v_x(f_1 f_3 + f_2^2) = 0, \qquad v_x(f_1^2 + f_3^2 g) \geq 1,
\]
and so $\SY(f,g)$ {is} the differential of an element of $k(X)$ with a pole of order $2v_x(f_1^2 + f_3^2 g) - 1$ at $x$. This order being positive and odd, {$\SY(f,g)$ is not} regular at $x$.
\end{proof}

We conclude this section by showing that the existence of tame and pseudotame morphisms is intricately linked. 
We follow the proof of \cite[Theorem~4.1]{SY18}, with some minor adjustments to accommodate the case where $k$ is finite. (In exchange, we do not attempt to optimize the degree of the morphism.) 

\begin{lemma} \label{L:pseudotame to tame}
If $f \in R_X$ is pseudotame, then there exists $g \in \Gamma f$ which is tame;
in particular, $X$ admits a tame morphism to $\PP^1_k$ if and only if it admits a pseudotame morphism to $\PP^1_k$.
\end{lemma}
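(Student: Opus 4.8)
The plan is to start from a pseudotame $f \in R_X$ and modify it within its $\Gamma$-orbit to kill wild ramification one bad point at a time. By Lemma~\ref{L:pseudotame to uniformizer}, pseudotameness at a point $x$ means exactly that $\Gamma f$ contains a uniformizer at $x$; the obstacle is that the element of the orbit realizing this may differ from point to point, so one cannot simply quote that lemma globally. Instead I would fix $f$ and examine the (finite) set $S \subseteq X^\circ$ of points where $f$ is wildly ramified. The goal is to produce $g \in \Gamma f$ whose ramification at \emph{every} point of $S$ is tame, while not introducing new wild ramification elsewhere; since $f$ is pseudotame everywhere, the ``elsewhere'' points can only acquire wild ramification if we are careless, and the operations in $\Gamma$ that move $f$ by a fourth power $h^4$ are the ones to watch.

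The key step is local analysis at a single $x \in S$. Write $f^* t = a_m u^m + a_{m+1} u^{m+1} + \cdots$ in a uniformizer $u$ at $x$ (after translating the target coordinate so $f(x) = 0$, and possibly replacing $f$ by $f^{-1}$). Since $f$ is pseudotame at $x$, the first exponent $i$ appearing with $4 \nmid i$ is odd; but $m$ itself may be even (a power of $2$). The trick, as in Sugiyama--Yasuda, is that the leading terms of the form $a_j u^j$ with $j$ divisible by $4$ are (up to a correction living in $k \cdot k(X)^4$, using $k$ perfect so that $a_j$ has a fourth root in $k(X)$ locally) of the shape $(\text{something})^4$, hence can be absorbed by the substitution $f \mapsto f + h^4$ for a suitable $h \in k(X)$. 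After clearing all such terms, the new valuation of $f^*t$ at $x$ is the odd integer $i$, so $f$ becomes tame at $x$. The delicate bookkeeping is to choose $h$ so that this local surgery at $x$ does not worsen the ramification at the other points of $S$ or at the (currently tame) points outside $S$: one arranges $h$ to have high enough order of vanishing at every point of $S \setminus \{x\}$ and to be regular where needed, which is possible because $X$ is a curve (one has enough room in $k(X)$, e.g. by weak approximation / Riemann--Roch). Iterating over the finitely many points of $S$, each step strictly decreasing $\#S$, yields a tame $g \in \Gamma f$.

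I expect the main obstacle to be precisely this \emph{simultaneous} control: ensuring that the fourth-power translation curing wildness at $x$ is compatible with the cures already performed at earlier points, and does not spoil tameness at a generic point. One clean way to organize this is to do all the local modifications at once — choose $h \in k(X)$ prescribed modulo a high power of $\frakm_{X,x}$ at each $x \in S$ and regular outside $S$ (again possible on a curve) — rather than iterating, so that the bad set $S$ is cleared in a single application of $f \mapsto f + h^4$ followed by at most one $f \mapsto f^{-1}$ and a rescaling. The second assertion of the lemma is then immediate: a tame morphism is pseudotame by Definition~\ref{D:pseudotame}, and conversely any pseudotame $f$ produces a tame $g \in \Gamma f \subseteq R_X$ by the above, so $X$ admits a tame morphism to $\PP^1_k$ if and only if it admits a pseudotame one.
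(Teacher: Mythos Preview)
Your local analysis is right, but the global patching has a real gap that is not just bookkeeping. The claim that adding $h^4$ ``does not spoil tameness at a generic point'' provided $h$ is ``regular where needed'' is too optimistic. At a point $x$ where $f$ is already \emph{tame} with odd ramification index $e \geq 5$, we have $v_x(f - f(x)) = e$, but for a generic regular $h$ one has $v_x((h - h(x))^4) = 4$; hence $v_x\big((f+h^4) - (f+h^4)(x)\big) = 4$, which is wild. To prevent this you would need $v_x(h - h(x)) > e/4$ at every such point, and summing these constraints over the ramification locus gives a number of conditions on $h$ of the same order as $\deg(f)/4$, i.e.\ comparable to the dimension of the Riemann--Roch space you are drawing $h$ from. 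So weak approximation does not obviously give you enough room. (There is a secondary issue at poles of $f$: adding a regular $h^4$ leaves an even pole order unchanged, so your ``at most one $f \mapsto f^{-1}$'' does not fix all of $S$ at once when $S$ meets both fibers $0$ and $\infty$.)

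The paper's proof is organized precisely to dodge this counting problem. After clearing denominators and forcing an odd pole order at a fixed point $\infty$, it replaces the working function by a further odd power plus a fourth power so that the resulting $f_3$ has only \emph{simple} zeros. Then it passes to $f_4 = f_3^3 + h_4^4$: outside the zero locus $Y$ of $df_3$ and away from $\infty$, the only ramification of $f_3^3$ occurs at zeros of $f_3$, where the index is exactly $3$. Since $3 < 4$, adding any regular $h_4^4$ leaves these indices untouched. Thus the only points where tameness must be \emph{engineered} are the finitely many points of $Y \cup \{\infty\}$, and now Riemann--Roch does give enough room for $h_4$. The cubing step is the missing idea in your proposal: it caps the incidental ramification at $3$ so that the fourth-power surgery cannot create new wildness.
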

\begin{proof}
Fix a point $\infty \in X^\circ$ and let $R$ be the coordinate ring of the affine scheme $X \setminus \{\infty\}$. For $g \in R$ nonzero, define $\deg(g)$ as the pole order at $\infty$, or equivalently the minimal $n \in \mathbb{Z}_{\geq 0}$ such that {$g \in H^0(X, \mathcal{O}_X(n\infty))$}.

Write $f = h_0/h_1$ with $h_0, h_1 \in R$ and put $f_1 := h_1^4 f \in \Gamma f \cap R$. 
Since $f_1$ is pseudotame at $\infty$, so is $f_1^{2e_1+1}$ for any nonnegative integer $e_1$; by taking $e_1$ sufficiently large, we can ensure that there exists $h_2 \in R$ such that $\deg(f_2)$ is odd for $f_2 := 
f_1^{2e_1+1} + h_2^4$.
More precisely, if $\deg(f_1)$ is odd we may take $h_2 = 0$. Otherwise, $\deg(f_1)$ must be divisible by 4; by completing at infinity, we can find $r>0$ such that for every $e_1\geq 0$, there exists $\overline{h}_{2} \in \calO_X(\frac{1}{4}\deg(f_1^{2e_1+1})\infty)/\frakm_{X,\infty}^r$ for which 
\[
f_1^{2e_1+1}|_{r\infty} + \overline{h}_2^4 \in \calO_X(\deg(f^{2e_1+1}_1)\infty)/\frakm_{X,\infty}^r
\]
is of odd multiplicity. (We are using here the observation that for $f \in k((t))$, if we expand $f^{2e+1}$ and then compute the difference between the lowest degree of a nonzero term and the lowest odd degree of a nonzero term, the result is independent of $e>0$.) Now take $e_1 \gg 0$ so that $H^0(X, \calO_X(\frac{1}{4}\deg(f_1^{2e_1+1})\infty)) \to H^0(r\infty, \calO_X(\frac{1}{4}\deg(f_1^{2e_1+1})\infty))$ is surjective; then we can lift $\overline{h}_2$ to the sought-after $h_2 \in R$.

Let $Y$ be the union of the set-theoretic zero loci of $f_2$ and $df_2$ on $X \setminus \{\infty\}$.
For $e_2$ a sufficiently large positive integer, we have $(2e_2 + 1)\deg(f_2) > 12g-2$,
and moreover we can find $h_3 \in R$ such that
$(2e_2 + 1) \deg(f_2) > 4 \deg(h_3)$ and $f_3 := f_2^{2e_2+1} + h_3^4$ does not vanish anywhere on $Y$.
This can be ensured by enforcing explicit values of $h_3$ at points of $Y$ (the number of which is independent of $e_2$).
In particular, $\deg(f_3)$ is odd. Moreover, since the set-theoretic zero locus of $df_3$ is equal to $Y$, the zeroes of $f_3$ are all simple.

We will take $f_4 \in \Gamma f$ to have the form $f_3^3 + h_4^4$ for a suitable choice of $h_4 \in R$.
To find $h_4$, let $m_y$ denote the order of $df_3 = f_2^{2e_2}\,df_2$ at $y \in Y$, so that 
\[
\sum_{y \in Y} m_y \deg_k(y) = \deg(df_3) = \deg(f_3) + 2g-1.
\]
Since we are assuming that $k$ is perfect, 
$m_y$ is even; let $I$ be the ideal of $R$ consisting of elements which vanish at $y$ to order $\lfloor m_y/4 \rfloor + 1$ for each $y \in Y$. We may then choose $h_4 \in R$ such that $f_3^3 + h^4_4$ is tame at all $y \in Y$,
and this property holds also with $h_4$ replaced by any element of its congruence class modulo $I$.
In fact, by Riemann-Roch, we may find such an element $h_4$ with
\begin{align*}
\deg(h_4) &\leq 2g + \sum_y (\lfloor m_y/4 \rfloor +1) \deg_k(y)\\
& \leq 2g + \sum_y (m_y/2) \deg_k(y)  = 3g + \frac{1}{2}(\deg(f_3)  -1)
\end{align*}
and so $4 \deg(h_4) \leq 12g -2 + 2\deg(f_3)$. Since $\deg(f_3) > 12g-2$, we have
$4\deg(h_4) < 3 \deg(f_3)$ and so $f_4 := f_3^3 + h_4^4$ is tame at $\infty$.

To complete the argument, we must check that $f_4$ is tame at any $x \in X^\circ \setminus (Y \cup \{\infty\})$.
Since $df_4 = 3f_3^2 df_3$ and $df_3$ does not vanish at $x$, $f_4$ can only be ramified at $X$ if $f_3$ vanishes at $x$. In this case, $f_3$ has a simple zero at $x$, so $df_4$ has a double zero and both $f_3^3$ and $f_4$ have ramification index 3 at $x$ (this being unaffected by the addition of $h_1^4$).
This proves the claim.
\end{proof}

\begin{remark}
The method of proof of Lemma~\ref{L:pseudotame to tame} cannot ensure that
$g$ is \emph{triply ramified}, i.e., that its ramification indices are all equal to 1 or 3;
it may be possible to modify the argument to achieve triple ramification away from $\infty$,
but the method depends essentially on the large pole at $\infty$.

Whether or not an arbitrary curve admits a triply ramified morphism to $\PP^1_k$ remains an open problem even
in characteristic 0.
Over $\CC$, Fried--Klassen--Kopeliovich showed that the generic genus-1 curve admits
a triply ramified morphism \cite{FKK00} (see also \cite[Proposition~6.4]{Sch03a});
this was extended to genus-$g$ curves for any $g$ by Artebani--Pirola
\cite{AP05}.
Over an algebraically closed field of characteristic $p\neq 3$, Schr\"oer proved that the generic genus-1 curve admits a triply ramified morphism \cite[Corollary~5.3]{Sch03b}, and for $g \geq 2$ the set of points in the moduli space of genus-$g$ curve corresponding to curves admitting a triply ramified morphism has dimension at least $2g-3$ \cite[Corollary~5.2]{Sch03b}.
\end{remark}

\section{Conic bundles}
\label{sec:conic bundle}

We now expand on a point of \cite{SY18} to exhibit a geometric obstruction to the existence of a tame morphism. 
For the moment, we do not require $k$ to be perfect.

\begin{defn}
Following Raynaud \cite[\S 4]{Ray82}, we may identify the canonical theta characteristic on $X^{(1)}$ with the image $B$ of $\pi^{(1)}_* d: \pi^{(1)}_* \calO_X \to \pi^{(1)}_* \omega_{X/k}$ as follows.
Note that there is an exact sequence 
\[
0 \to \calO_{X^{(1)}} \to \pi^{(1)}_* \calO_X \to B \to 0
\]
of sheaves on $X^{(1)}$. To write down the autoduality on $B$, form the exact sequence
\[
0 \to B \to \pi^{(1)}_* \omega_X \stackrel{c}{\to} \omega_{X^{(1)}/k} \to 0
\]
where $c$ denotes the Cartier operator; since $p=2$, the pairing $(f,g) \mapsto c(f\,dg)$ 
on $\pi^{(1)}_* \calO_X$ induces the desired isomorphism
\begin{equation} \label{eq:autoduality}
B \otimes B \to \omega_{X^{(1)}/k}.
\end{equation}

By \eqref{eq:autoduality} plus Serre duality, we have a perfect pairing
\begin{equation} \label{eq:Serre duality}
H^0(X^{(1)}, B) \times H^1(X^{(1)}, B) \to k;
\end{equation}
{explicity, for $\phi \in H^0(X^{(1)}, B) \simeq \Hom(B,\omega_{X^{(1)}/k})$,
the corresponding map $H^1(X^{(1)}, B) \to k$ is given by}
\[
H^1(X^{(1)}, B) {\to} H^1(X^{(1)}, \omega_{X^{(1)}/k})  \xrightarrow{\mathrm{Tr}} k.
\]
Here, the trace map $\mathrm{Tr}$ is a sum of appropriate residues (cf.\ \cite[Section III.7]{Har77}).

To write this pairing out explicitly, fix an open affine covering $\{U,V\}$  of $X$
and suppose  $\alpha \in H^0(X^{(1)}, B)$ is represented by some $f \in R_X$ which is regular and not a square on $U$.
Any class $\beta \in H^1(X^{(1)}, B)$ can then be represented by a single value in $H^0(U^{(1)} \cap V^{(1)}, B)$,
which can in turn be represented by some $g \in k(X)/k(X)^2$. 
Writing $g = g_0^2 + g_1^2 f$, we have $c(fdg) = g_1df \in H^0(U \cap V, \omega_{X^{(1)}/k})$, and so
\begin{equation} \label{eq:pairing explicit}
\langle \alpha, \beta \rangle = \sum_{x \in X \setminus U} \Res_x(g_1\,df).
\end{equation}
(Note that we must break symmetry by summing over either $X \setminus U$ or $X \setminus V$; the choice does not matter because the sum over $X \setminus (U \cap V)$ vanishes by the residue theorem.)
\end{defn}

We now come to \cite[Theorem~3.5]{SY18}.
\begin{defn}\label{D:torsor}
Let $U_1,\dots,U_n$ be a covering of $X$ by open affine subspaces.
For $i=1,\dots,n$, let $f_i \in R_X$ be a morphism which is pseudotame on $U_i$.
For $i,j=1,\dots,n$, $\SY(f_i, f_j)$ is represented by a regular differential on $U_i \cap U_j$ by Lemma~\ref{L:pseudotame and regular},
and hence defines an element of $H^0(U_i^{(1)} \cap U_j^{(1)}, B)$. 
By Theorem~\ref{T:symbol}, these elements define a class in $H^1(X^{(1)}, B)$. Moreover, changing the choice of the $f_i$ (after possibly refining the covering) has the effect of translating
the cocycle by a class in $\bigoplus H^0(U_i^{(1)}, B)$, and so does not change the resulting class. Indeed, for $f_i'$ unramified on $U_i$, we have $\SY(f'_i,f_j) = \SY(f_i,f_j) + \SY(f'_i, f_i)$.

By Lemma~\ref{L:coboundary} below, the class in $H^1(X^{(1)}, B)$ vanishes, so we get a 1-coboundary on $X^{(1)}$ with values in $B$. The collection of coboundaries that occur in this fashion form a canonical $H^0(X^{(1)}, B)$-torsor, which we call the \emph{SY torsor} of $X$; we may view this torsor in a natural way as the set of $k$-points of an affine space $\calS_X$ over $k$.
(Recall that $X$ is ordinary if and only if $H^0(X^{(1)}, B) = 0$, in which case the SY torsor is reduced to a single point.)
\end{defn}

\begin{lemma} \label{L:coboundary}
The class in $H^1(X^{(1)}, B)$ defined in Definition~\ref{D:torsor} vanishes.
\end{lemma}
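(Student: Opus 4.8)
The class in question is assembled from local data $\SY(f_i,f_j) \in H^0(U_i^{(1)}\cap U_j^{(1)}, B)$, and the goal is to show it is a coboundary, i.e.\ that it lies in the image of $\bigoplus_i H^0(U_i^{(1)},B) \to \bigoplus_{i,j} H^0(U_i^{(1)}\cap U_j^{(1)},B)$. By the perfect Serre-duality pairing \eqref{eq:Serre duality}, $H^1(X^{(1)},B)$ is dual to $H^0(X^{(1)},B) \simeq \Hom(B,\omega_{X^{(1)}/k})$, so it suffices to check that the class pairs to zero against every global section $\alpha \in H^0(X^{(1)},B)$. The strategy is therefore: (i) represent an arbitrary $\alpha$ by some $f \in R_X$ which is regular and not a square on one chart $U = U_1$ (as in the setup of \eqref{eq:pairing explicit}); (ii) use $\Gamma$-invariance and additivity (Theorem~\ref{T:symbol}) plus the freedom in Definition~\ref{D:torsor} to replace the $f_i$ by convenient representatives — in particular we may take $f_1 = f$ itself, or at least arrange that on $U_1$ the cocycle entries are $\SY(f,f_j)$; (iii) compute the cup-product pairing $\langle \alpha, [\text{class}]\rangle$ via the explicit residue formula \eqref{eq:pairing explicit} and show it vanishes.

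\textbf{Key steps in order.} First I would fix the covering and, shrinking if necessary, pick $f_1 \in R_X$ pseudotame (indeed unramified) on $U_1$ representing a chosen $\alpha$; note $\alpha$ corresponds under $H^0(X^{(1)},B)\simeq\Hom(B,\omega_{X^{(1)}/k})$ to the map ``multiply by $f_1$ then apply Cartier'', matching the description preceding \eqref{eq:pairing explicit}. Second, I would unwind the definition of the pairing between $\alpha$ and the cocycle $(\SY(f_i,f_j))_{i,j}$: using a partition-of-unity / Čech representative, the image of the cocycle in $H^1(X^{(1)},\omega_{X^{(1)}/k})$ obtained by applying $\alpha$ is, Čech-theoretically, built from the differentials obtained by ``$c(f_1\, d(\,\cdot\,))$'' applied to the $g_1$-components of $\SY(f_1,f_j)$. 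Third — and this is the crux — I would show that this resulting $\omega_{X^{(1)}/k}$-valued $1$-cocycle is itself a coboundary, hence traces to $0$ in $k$. The natural mechanism is additivity: $\SY(f_i,f_j) = \SY(f_1,f_j) - \SY(f_1,f_i)$ (Theorem~\ref{T:symbol}(c), remembering that in characteristic $2$ signs are irrelevant), so the cocycle $(\SY(f_i,f_j))$ is visibly a coboundary \emph{of the $0$-cochain} $(\SY(f_1,f_i))_i$ — \emph{except} that $\SY(f_1,f_i)$ need not be regular on $U_i$, only on $U_1 \cap U_i$, which is exactly why the class need not vanish in $H^1(X^{(1)},B)$ a priori. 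So the real content is that pairing against a \emph{global} section $\alpha$ kills precisely the part that obstructs this splitting: I would show that applying $\alpha = c(f_1\, d(-))$ turns $\SY(f_1,f_i)$ into something regular on all of $U_i$ (or at least into a Čech $0$-cochain with values in $\omega_{X^{(1)}/k}$ whose coboundary matches), so that after pairing the class becomes an honest coboundary and the trace vanishes.

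\textbf{Main obstacle.} The hard part is step (iii): controlling the poles of $c(f_1\, d(\SY(f_1,f_i)\text{-component}))$ on $U_i$ and showing the pairing computation produces a coboundary rather than a genuine cohomology class. Concretely, I expect to need a local computation at each point $x \in X \setminus U_1$: write $f_1$ and a pseudotame $f_i$ near $x$ in the normal form of Remark~\ref{R:f in terms of g transformed}, expand $\SY(f_1,f_i)$ as in Definition~\ref{D:symbol}, and verify via the residue formula \eqref{eq:pairing explicit} that $\sum_{x} \Res_x$ of the relevant differential telescopes to $0$ — using that $f_1$ is \emph{not a square} on $U_1$ (so $\alpha$ is a well-defined section there) and the residue theorem on $X^{(1)}$. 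An alternative, possibly cleaner, route is to appeal directly to Remark~\ref{R:torsor}'s torsor structure after base change to $\overline{k}$: over $\overline{k}$, $\SY(-,g)\colon R_X/\Gamma \to \Omega_{k(X)/k}$ is a bijection, so one may choose $f_i$ with $\SY(f_1,f_i)$ equal to any prescribed regular-on-$U_1\cap U_i$ differential, and then a cohomological argument shows the obstruction class, being the image of a class coming from $\Omega$, automatically dies against global sections; descent from $\overline{k}$ to $k$ then finishes it since the pairing \eqref{eq:Serre duality} commutes with flat base change. I would attempt the direct residue computation first and fall back on the $\overline{k}$-argument if the bookkeeping becomes unwieldy.
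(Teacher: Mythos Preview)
Your overall framework matches the paper's: reduce to showing the class pairs to zero against every $\alpha \in H^0(X^{(1)},B)$ via the explicit residue formula \eqref{eq:pairing explicit}. You also correctly identify that one may (and should) take the representative on the first chart to be the function $g \in R_X$ that represents $\alpha$ itself.

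The gap is in your step (iii). You plan to take generic pseudotame $f_i$ on the other charts and then attempt to control the poles of the Cartier image of $\SY(g,f_i)$, or do a local residue computation point by point; but as you yourself note, $\SY(g,f_i)$ need not be regular on $U_i$, and there is no mechanism in your sketch that forces the residues to cancel. The $\overline{k}$-fallback does not help either: surjectivity of $\SY(-,g)$ over $\overline{k}$ gives no control over which $f_i$ are pseudotame on the prescribed $U_i$, and ``the obstruction class being the image of a class coming from $\Omega$'' is not a well-defined statement.

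What the paper does instead is make a \emph{specific} choice on the second chart that trivializes the computation. Since the class is independent of the covering and of the pseudotame representatives, pass to a two-chart cover $\{U,V\}$ with $U$ the locus where $g$ is regular and unramified, and on $V$ take the representative to be $g+h^2$ for some $h \in k(X)$ chosen so that $g+h^2$ is regular and tame on an open set containing $X\setminus U$. Writing $h = h_0^2 + h_1^2 g$, one has $g+h^2 = h_0^4 + g + h_1^4 g^2$, so in the notation of Definition~\ref{D:symbol} the coefficients are $(h_0,1,h_1,0)$ and hence
\[
\SY(g,\,g+h^2) \;=\; h_1^4\,dg.
\]
Plugging into \eqref{eq:pairing explicit} (the ``$g_1$-component'' of a lift of this element of $B$ is $h_1^2$) gives
\[
\langle \alpha, \beta \rangle \;=\; \sum_{x \in X\setminus U} \Res_x(h_1^2\,dg) \;=\; \sum_{x \in X\setminus U} \Res_x(dh) \;=\; 0,
\]
since exact differentials have zero residue everywhere. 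This single trick --- choosing the second pseudotame function in the $\Gamma$-orbit of $g$, specifically of the form $g+(\text{square})$ --- is the missing idea that collapses your ``main obstacle'' to a one-line computation.
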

\begin{proof}
Since \eqref{eq:Serre duality} is a perfect pairing, it suffices to check that the specified class pairs to zero
with the class of $H^0(X^{(1)}, B)$ represented by an arbitrary element $g \in R_X$.
Let  $U$ be the open subspace of $X$ on which $g$ is regular and unramified;
we can then find an element $h \in k(X)$ such that $g + h^2$ is regular and tame on some open subspace $V$ of $X$
containing $X \setminus U$. Write $h = h_0^2 + h_1^2 g$.
We represent the specified class in $H^1(X^{(1)}, B)$ as the 1-cocycle with respect to the covering $\{U^{(1)},V^{(1)}\}$ taking the value
\[
\SY(g, g+h^2) = \SY(g, h_0^4 + g + h_1^4 g^2) = h_1^4\,dg
\]
on $U^{(1)} \cap V^{(1)}$. By \eqref{eq:pairing explicit}, the desired value of the pairing is
\[
\sum_{x \in X \setminus U} \Res_x(h_1^2\,dg) = \sum_{x \in X \setminus U} \Res_x(dh) = \sum_{x \in X \setminus U} 0 = 0,
\]
as claimed.
\end{proof}

We next  extract a key construction from the proof of  \cite[Lemma~3.3]{SY18}. Note that constructing an F-splitting $\pi^{(1)}_*\calO_X = \calO_{X^{(1)}} \oplus B^1$ over an open subset $U \subseteq X$ is equivalent to finding a global section $g$ of $\calO_U$ with the image in $\Gamma(U^{(1)}, B)$ vanishing nowhere on $U^{(1)}$. We denote this splitting as $\pi^{(1)}_*\calO_X = \calO_{X^{(1)}} \oplus \calO_{X^{(1)}}g$ over $U$. In turn, we obtain splittings of higher Frobenius pushforwards, for example, $\pi^{(2)}_* \calO_X = \bigoplus_{i=0}^3 \calO_{X^{(2)}} g^i$ over $U$.

\begin{defn} 
Form the $\PP^2$-bundle $Y := \PP( \pi_*^{(2)} \calO_X/\calO_{X^{(2)}})$ on $X^{(2)}$.
For $U$ an open subset of $X$, we can define homogeneous coordinates $T_1, T_2, T_3$ on $Y$ over $U^{(2)}$
by choosing $g \in \Gamma(U^{(2)}, \pi_*^{(2)} \calO_X) = \Gamma(U, \calO_X)$
whose image in $\Gamma(U^{(1)}, B)$ does not vanish anywhere,
and splitting $\pi^{(2)}_* \calO_X/\calO_{X^{(2)}}$ over $U$ as 
$\bigoplus_{i=1}^3 \calO_{X^{(2)}} g^i$.

Let $B^{(1)}$ be the pushforward of $B$
to $X^{(2)}$ via the relative Frobenius $\pi^{(1,2)}: X^{(1)} \to X^{(2)}$.
Given a section $a \in \Gamma(U^{(2)}, B^{(1)})$, we can write
$a = b^2 g$ with $b \in \Gamma(U^{(2)}, \pi^{(1,2)}_* \calO_{X^{(1)}})$
and then form the subscheme $C_{g,da}$ of $Y \times_{X^{(2)}} U^{(2)}$ 
cut out by $T_1 T_3 + T_2^2 + b (T_1^2 + g T_3^2)$;
this is a bundle of smooth conics over $U^{(2)}$.
Note that a $k(X^{(2)})$-valued point $[f_1: f_2: f_3]$ of $C_{g,da}$ corresponds to a solution
$f = f_0^4 + f_1^4 g + f_2^4 g^2 + f_3^4 g^3 \in k(X)$ of the equation
$\SY(f,g) = da$.

This construction is independent of coordinates in the following sense.
Suppose that $h \in \Gamma(U^{(2)}, \pi_*^{(2)} \calO_X)$ also has image in 
$\Gamma(U^{(1)}, B)$ which does not vanish anywhere. Then from Theorem~\ref{T:symbol} and the previous paragraph, we have an equality
\[
C_{g,da} = C_{h,da+ \SY(g,h)}
\] 
of closed subschemes of $Y \times_{X^{(2)}} U^{(2)}$. In order to describe this identity explicitly, let us denote the change of coordinates induced by the equality $\pi^{(2)}_* \calO_U/\calO_{U^{(2)}} = \bigoplus_{i=1}^3 \calO_{U^{(2)}} g^i = \bigoplus_{i=1}^3 \calO_{U^{(2)}} h^i$ by $(T_1,T_2,T_3) \mapsto (T'_1, T'_2, T'_3)$. By Theorem~\ref{T:symbol}, the equation $\SY(f,g)=da$ is equivalent to $\SY(f,h) = da + \SY(g,h)$, and so we get that
\[
\frac{T'_1T'_3 + (T'_2)^2}{(T'_1)^2 + h(T'_3)^2} = b'.
\]
over $k(X^{(2)})$ where $da + \SY(g,h) = (b')^2dh$. Since $X$ is integral, this is equivalent to $T'_1 T'_3 + (T'_2)^2 + b' ((T'_1)^2 + h (T'_3)^2)=0$ over $U^{(2)}$ which concludes the explanation.
\end{defn}

\begin{defn}
With notation as in Definition~\ref{D:torsor}, each element of the SY torsor is represented by a 0-cochain
$(a_i)_i$ with $a_i \in H^0(U_i^{(1)}, B)$. These elements satisfy
\[
\SY(f_i, f_j) = da_i + da_j.
\]
For each $i$, consider the conic bundle $C_{f_i, da_i}$ over $U_i^{(2)}$; by the previous discussion, 
the restrictions of $C_{f_i, da_i}$ and $C_{f_j,da_j}$ coincide as closed subschemes of
$Y \times_{X^{(2)}} (U_i \cap U_j)^{(2)}$. These bundles therefore glue to give a conic bundle over 
$X^{(2)}$ contained in $Y$; we call this the \emph{SY bundle} of $X$ associated to the original cochain. 

We may naturally globalize the construction over $\calS_X$ to obtain a conic bundle over $X^{(2)} \times_k \calS_X$
contained in $Y \times_k \calS_X$.
We call this the \emph{{total} SY bundle} of $X$, denoted hereafter by $Z_X$.
\end{defn}

\begin{lemma} \label{L:SY class to pseudotame}
Set notation as in Definition~\ref{D:torsor} and suppose that $k$ is perfect. Then 
there is a canonical bijection between pseudotame morphisms $f \in R_X$ and {pairs of dashed arrows which complete the commutative diagram
\[
\xymatrix{
X^{(2)} \ar@{-->}[r] \ar[d] & Z_X \ar[d] \ar[r] & X^{(2)} \ar[d] \\
\Spec(k) \ar@{-->}[r] & \calS_X \ar[r] & \Spec(k)
}
\]
in such a way that the horizontal compositions are identity morphisms.}
\end{lemma}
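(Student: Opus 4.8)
The plan is to exhibit two explicit, mutually inverse constructions. The two main inputs are Lemma~\ref{L:pseudotame and regular}, which converts pseudotameness of $f$ at a point into regularity of $\SY(f,f_i)$ there, and the fact that $Z_X\to X^{(2)}$ is proper with $X^{(2)}$ a smooth curve, so that a section of it over $X^{(2)}$ is the same datum as a $k(X^{(2)})$-rational point of its generic fiber.

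\emph{From $f$ to a pair of dashed arrows.} Fix the covering $U_1,\dots,U_n$ and the pseudotame $f_i$ of Definition~\ref{D:torsor}, and let $f\in R_X$ be pseudotame. By Lemma~\ref{L:pseudotame and regular} each $\SY(f,f_i)$ is regular on $U_i^{(1)}$, hence defines $a_i\in H^0(U_i^{(1)},B)$; by Theorem~\ref{T:symbol}(c), $\SY(f_i,f_j)=\SY(f,f_i)+\SY(f,f_j)=da_i+da_j$, so $(a_i)_i$ is a $0$-cochain of the shape occurring in Definition~\ref{D:torsor} and thus names a point $s_f\in\calS_X(k)$; this is the lower dashed arrow. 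For the upper one, expand $f$ in the basis $1,f_i,f_i^2,f_i^3$ of $k(X)$ over $k\cdot k(X)^4$ (Definition~\ref{D:symbol}) as $f=f_0^4+f_1^4 f_i+f_2^4 f_i^2+f_3^4 f_i^3$; since $\SY(f,f_i)=da_i$, the point $[f_1:f_2:f_3]$ lies on $C_{f_i,da_i}$. As $i$ varies this is one and the same $k(X^{(2)})$-rational point of $Y$ (the line spanned by $f$ modulo $k\cdot k(X)^4$), hence a $k(X^{(2)})$-rational point of the generic fiber of the SY bundle $C_{s_f}$ attached to $s_f$, which is the fiber of $Z_X\to\calS_X$ over $s_f$. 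By properness this extends uniquely to a section $\sigma_f\colon X^{(2)}\to Z_X$ with $(Z_X\to X^{(2)})\circ\sigma_f=\mathrm{id}_{X^{(2)}}$ and with image lying over $s_f$; the pair $(\sigma_f,s_f)$ completes the diagram.

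\emph{From a pair of dashed arrows to $f$.} Given such a pair, let $s\in\calS_X(k)$ be represented by a cochain $(a_i)_i$ and let $\sigma\colon X^{(2)}\to Z_X$ be the section; commutativity of the left square forces $\sigma$ to factor through $C_s$, so over each $U_i^{(2)}$ it is a section of $C_{f_i,da_i}\to U_i^{(2)}$, i.e.\ its generic point is a $k(X^{(2)})$-rational point $[f_1:f_2:f_3]$ of that conic. This point corresponds to $f^{(i)}\in R_X$ with $\SY(f^{(i)},f_i)=da_i$ — and $f^{(i)}\in R_X$ automatically, since any such point has $f_1^2+f_3^2 f_i\neq 0$ (otherwise $f_1/f_3=\sqrt{f_i}\notin k(X)$), so the non-square part of $f^{(i)}$ does not vanish. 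The coordinate-change identity $C_{g,da}=C_{h,da+\SY(g,h)}$ from the conic bundle construction shows $f^{(i)}=f^{(j)}$ for all $i,j$, so there is a single $f\in R_X$ with $\SY(f,f_i)=da_i$ for every $i$. Since $da_i$ is regular on $U_i^{(1)}$ and $f_i$ is pseudotame on $U_i$, Lemma~\ref{L:pseudotame and regular} shows $f$ is pseudotame on each $U_i$, hence on all of $X$. Tracing through the definitions, this $f$ recovers $s$ (as $s_f$) and $\sigma$ (as $\sigma_f$), and the forward construction recovers the original $f$, so the two maps are mutually inverse; canonicity is clear since only the data already fixed in Definition~\ref{D:torsor} intervenes.

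\emph{Main obstacle.} The delicate part is the gluing/chart-change bookkeeping: one must verify that the $\PGL_3$ transformation on homogeneous coordinates induced by the two identifications $\bigoplus_\ell\calO_{X^{(2)}}f_i^\ell=\bigoplus_\ell\calO_{X^{(2)}}f_j^\ell$ is exactly the one that turns $\SY(f,f_i)=da_i$ into $\SY(f,f_j)=da_j$ (via Theorem~\ref{T:symbol}), so that the locally read-off functions $f^{(i)}$ agree on the overlaps $U_i^{(2)}\cap U_j^{(2)}$ and the locally read-off points of $Y$ glue to a genuine section of $C_s$ — this is precisely the content of the coordinate-independence assertion in the conic bundle construction, applied at the generic point. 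Closely related is checking that this correspondence is an honest bijection on the nose (and not merely up to the unipotent operations $f\mapsto a^4f+b^4$), which is where the rigidification of coordinates by the chosen generators $1,f_i,f_i^2,f_i^3$ must be used. Once these points are secured, the passage between sections and rational points (properness, regularity of $X^{(2)}$) and between pseudotameness and regularity (Lemma~\ref{L:pseudotame and regular}) close the argument.
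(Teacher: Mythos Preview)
Your argument is correct and follows essentially the same route as the paper's proof: unwind a pair of dashed arrows into a cochain $(a_i)_i$ plus a $k(X^{(2)})$-rational point of the associated conic, translate the latter into a solution $f$ of the system $\SY(f,f_i)=da_i$, and use Lemma~\ref{L:pseudotame and regular} in both directions to match pseudotameness with regularity of the $da_i$. You are simply more explicit than the paper about the gluing (via the coordinate-change identity for $C_{g,da}$) and about the passage between sections and generic points via properness; the ``bijection on the nose versus up to $f\mapsto a^4f+b^4$'' issue you flag is real and is left at the same level of precision in the paper's own proof.
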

Since $X$ is a curve, giving a section over $X^{(2)}$ is equivalent to giving a section over the fraction field $k(X^{(2)})$.
\begin{proof}
A diagram as above corresponds to the choice of $k$-rational point of $\calS_X$ and a section of the fiber of $Z_X$ over this point. The first choice amounts to picking a 0-cochain $(a_i)_i$ with $a_i\in H^0(U_i^{(1)}, B)$; the second choice
amounts to picking a $k(X^{(2)})$-rational point of the fiber, which in turn corresponds to a solution $f \in k(X)$
of the system of equations $\SY(f, f_i) = da_i$. By Lemma~\ref{L:pseudotame and regular}, any such $f$
is pseudotame (and in particular belongs to $R_X$).

Conversely, if we start with $f$ pseudotame, then Lemma~\ref{L:pseudotame and regular} implies that $da_i := \SY(f,f_i)$ belongs to $\Gamma(U^{(1)}_i, B)$, so we may reverse the constructions of the previous paragraph. This proves the claim.
\end{proof}

\begin{remark} \label{R:double cover construction}
It is natural to wonder if one can describe the geometric structure of the conic bundles we constructed above. In this remark we show that they are in fact $2:1$ purely inseparable covers of $\mathbb{P}_{X}(\pi^{(2)}_*\calO_X/\pi^{(1)}_*\calO_X)$. For simplicity, we assume that $k$ is perfect.

Note that every conic in a two-dimensional projective space defined over an algebraically closed field of characteristic two is strange, that is, all tangent lines to it pass through a single point, called the \emph{strange point}. Moreover, they are all purely inseparable $2:1$ covers of $\mathbb{P}^1$ constructed by blowing up the strange point and sending a point $x$ on the conic to the intersection of the tangent line at $x$ with the exceptional locus.

To prove the claim, we first show that the section of $Y= \mathbb{P}_{X}(\pi^{(2)}_*\calO_X/\calO_X)$ given by the short exact sequence
\[
0 \to \pi^{(1)}_*\calO_X/\calO_X \to \pi^{(2)}_*\calO_X/\calO_X \to \pi^{(2)}_*\calO_X/\pi^{(1)}_*\calO_X \to 0
\] 
is a section of strange points. Indeed, our conic is given by the equation $T_1 T_3 + T_2^2 + b (T_1^2 + g T_3^2)=0$ and the above section picks up the point $(0:1:0)$. Since the derivative of this conic is $T_3dT_1 + T_1 dT_3$, we get that the tangent lines are given by equations of the form $f_3T_1 + f_1T_3=0$, and thus they all pass through the point $(0:1:0)$.

By globalizing the construction explained in the second paragraph, every conic bundle as above admits a $2:1$ purely inseparable map to the exceptional locus of the blow-up of it at the section of strange conics. Since this exceptional locus is isomorphic to $\mathbb{P}_{X}(\pi^{(2)}_*\calO_X/\pi^{(1)}_*\calO_X)$, the claim follows.

In fact, by Lemma \ref{lemma:relative-Frobenius} the map from our conic bundle to the exceptional locus of the blow-up is the relative Frobenius over $X$. This implies that the Frobenius twist of our conic bundle is isomorphic to $\mathbb{P}_{X}(\pi^{(2)}_*\calO_X/\pi^{(1)}_*\calO_X)$. \end{remark}

\begin{lemma} \label{lemma:relative-Frobenius} Let $S$ be a smooth scheme of finite type over a field $k$ of positive characteristic $p>0$. Let $X$ and $Y$ be smooth $S$-curves, and let $f \colon X \to Y$ be a purely inseparable finite morphism of degree $p$ over $S$. Then $Y$ is isomorphic to the Frobenius twist of $X$ over $S$ and $f$ is the relative Frobenius.
\end{lemma}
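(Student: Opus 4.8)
\noindent The strategy is to identify $Y$ with the Frobenius twist $X^{(p/S)} := X \times_{S,F_S}S$ (the base change of $X \to S$ along the absolute Frobenius $F_S$ of $S$), under which $f$ becomes the relative Frobenius $F_{X/S}\colon X \to X^{(p/S)}$. Write $w\colon X^{(p/S)} \to X$ for the projection, so that $F_{X/S}$ is the unique $S$-morphism with $w \circ F_{X/S} = F_X$ (the absolute Frobenius of $X$). Three standard facts are in play: (i) smoothness is stable under arbitrary base change, so $X^{(p/S)} \to S$ is smooth of relative dimension $1$, whence $X^{(p/S)}$ is regular, in particular normal; (ii) the relative Frobenius of a smooth morphism of relative dimension $n$ is finite locally free of degree $p^n$, so $F_{X/S}$ has degree $p$; (iii) $f$, being finite, dominant, with $[k(X):k(Y)]=p$, is surjective, and is flat by miracle flatness (at $x\in X$ the ring $\mathcal O_{X,x}$ is Cohen--Macaulay, $\mathcal O_{Y,f(x)}$ is regular, $\dim\mathcal O_{X,x}=\dim\mathcal O_{S,s}+1=\dim\mathcal O_{Y,f(x)}$ since $X,Y$ are $S$-smooth of relative dimension $1$, and the fibre is finite), hence $f$ is finite locally free of degree $p$, thus faithfully flat and an epimorphism of schemes.

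The plan is to produce an $S$-morphism $\tilde V\colon Y \to X^{(p/S)}$ with $\tilde V \circ f = F_{X/S}$ and then to check that $\tilde V$ is an isomorphism. The key step is to descend $F_X$ along $f$, that is, to build $V\colon Y \to X$ with $V\circ f = F_X$. This is local on $Y$: writing $Y=\Spec A$ and $X=\Spec C$, with $A\hookrightarrow C$ finite and $\mathrm{Frac}(C)/\mathrm{Frac}(A)$ purely inseparable of degree $p$, one has $C^p\subseteq \mathrm{Frac}(C)^p\subseteq\mathrm{Frac}(A)$; since every element of $C$ is integral over the normal ring $A$, this forces $C^p\subseteq A$. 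Hence the $p$-power map $C\to C$ factors through $A\hookrightarrow C$, and $c\mapsto c^p$ is a ring homomorphism $C\to A$ defining a morphism $\Spec A\to\Spec C$. As this description is independent of the chosen charts, these local morphisms glue to a global $V\colon Y\to X$ with $V\circ f=F_X$. Composing $V\circ f=F_X$ with $\pi_X\colon X\to S$ and using $\pi_X\circ F_X=F_S\circ\pi_X$, $\pi_Y\circ f=\pi_X$, and the fact that $f$ is an epimorphism, one gets $\pi_X\circ V=F_S\circ\pi_Y$; so $(V,\pi_Y)$ defines an $S$-morphism $\tilde V\colon Y\to X^{(p/S)}$ with $w\circ\tilde V=V$. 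Then $\tilde V\circ f$ and $F_{X/S}$ are both $S$-morphisms $X\to X^{(p/S)}$ whose composite with $w$ is $F_X$, and such a morphism is unique by the universal property of the fibre product $X^{(p/S)}$; so $\tilde V\circ f=F_{X/S}$.

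It remains to see that $\tilde V$ is an isomorphism. It is surjective since $F_{X/S}=\tilde V\circ f$ is; it is universally closed since $F_{X/S}$ is and $f$ is surjective; and it is separated because the composite $Y\to X^{(p/S)}\to S$ is separated; hence $\tilde V$ is proper. It is quasi-finite since $F_{X/S}$ is and $f$ has finite fibres, so $\tilde V$ is finite. On function fields, $k(X^{(p/S)})\subseteq k(Y)\subseteq k(X)$ with $[k(X):k(X^{(p/S)})]=\deg F_{X/S}=p=[k(X):k(Y)]$, so $k(Y)=k(X^{(p/S)})$, i.e.\ $\tilde V$ is birational; a finite birational morphism onto the normal scheme $X^{(p/S)}$ is an isomorphism. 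Thus $\tilde V\colon Y\xrightarrow{\sim}X^{(p/S)}$ identifies $f$ with $F_{X/S}$.

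I expect the descent of $F_X$ along $f$ to be the main obstacle: it is the only place where normality of $Y$ (equivalently, smoothness over $S$, or even over $k$) is genuinely used, and it is precisely the statement that $p$-th powers of functions on a purely inseparable degree-$p$ cover of a normal base already lie on the base. Everything else is formal bookkeeping with the universal property of the Frobenius twist and stability properties of finite and proper morphisms. A minor point to attend to when writing the details: if $S$ (hence $X$, $Y$) is reducible, one should run the integral-closure argument over charts small enough that $A$ and $C$ are finite products of normal domains, and interpret ``birational'' componentwise.
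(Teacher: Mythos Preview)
Your proof is correct and follows essentially the same route as the paper: factor the absolute Frobenius $F_X$ through $f$ using $\mathcal{O}_X^p \subseteq \mathcal{O}_Y$, produce the induced map $Y \to X^{(p/S)}$, and conclude it is an isomorphism by degree/birationality plus normality of the target. The paper's argument is terser (it cites \cite[Tag~0CNF]{SP} for the inclusion $\mathcal{O}_X^p \subseteq \mathcal{O}_Y$ and counts degrees via $\deg g = p^{n-1} = \deg F_S$), and it then invokes \cite[Tag~0CCY]{SP} at the end to identify $f$ with the relative Frobenius; your use of the universal property of the fibre product to pin down $\tilde V \circ f = F_{X/S}$ directly is a small but genuine streamlining that makes that last citation unnecessary.
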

\begin{proof}
Since $f$ is purely inseparable of degree $p$, we have that $\mathcal{O}_X^{p} \subseteq \mathcal{O}_Y \subseteq \mathcal{O}_X$ (cf.\ \cite[Tag 0CNF]{SP}). Thus the absolute Frobenius $F_X$ factors through $f$, i.e.~$F_X \colon X \xrightarrow{f} Y \xrightarrow{g} X$.

Consider the following diagram
\begin{center}
\begin{tikzcd}
X \arrow[bend left = 30]{rr}{F_X} \arrow{r}{f} \arrow{rd} & Y \arrow{r}{g} \arrow{d} & X \arrow{d} \\
& S \arrow{r}{F_S} & S. 
\end{tikzcd}
\end{center}
Here the square is commutative because $\mathcal{O}_Y \subseteq \mathcal{O}_X$ and the big diagram is commutative. More precisely, we have that $X \xrightarrow{f} Y \xrightarrow{g} X \to S$ coincides with $X \xrightarrow{f} Y \to S \xrightarrow{F_S} S$, and hence so does $Y \xrightarrow{g} X \to S$ with $Y \to S \xrightarrow{F_S} S$ (as $f$ is an epimorphism).

In particular, we get a get a finite map $Y \to X'$, where $X' = X \times_S S$ is the Frobenius twist of $X$ over $S$. As $\deg f = p$, we must have that $\deg g = p^{n-1} = \deg F_S$ where $n$ is the dimension of $X$, and so $Y \to X'$ is of degree one. Hence $Y \simeq X'$ as $X'$ is normal.  By \cite[Tag 0CCY]{SP}, $f \colon X \to Y\simeq X'$ is the relative Frobenius over the generic point of $S$, and so $f$ is the relative Frobenius, as $X$ and $Y$ are integral.
\end{proof}

\section{The SY bundle and tame morphisms}
\label{sec:pseudotame}

Using the SY bundle, we establish our main results on the existence of tame morphisms.
We start by recovering the main result of Sugiyama--Yasuda \cite[Theorem~1.1]{SY18},
as well as a related observation \cite[Remark~3.7]{SY18}.
\begin{theorem}[Sugiyama--Yasuda] \label{T:torsor structure}
Suppose that $k$ is algebraically closed (and recall that $p=2$).
\begin{enumerate}
\item[(a)]
{Every SY bundle of $X$} admits a section. Consequently, by Lemma~\ref{L:pseudotame to tame} and Lemma~\ref{L:SY class to pseudotame}, $X$ admits a tame morphism to $\PP^1_k$.
\item[(b)]
{Via the symbol map $\SY$ of Definition~\ref{D:symbol}}, the subset of the quotient set $R_X/\Gamma$ consisting of orbits composed of pseudotame elements {is naturally identified with $\calS_X(k)$};
in particular, it carries the structure of a torsor for the group $H^0(X^{(1)}, B)$.
\end{enumerate}
\end{theorem}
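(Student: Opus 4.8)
\textbf{Proof strategy for Theorem~\ref{T:torsor structure}.}
The plan is to deduce both parts from the machinery already assembled, using only that $k$ is algebraically closed to invoke Tsen's theorem.

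\emph{Part (a).} A section of a given SY bundle is, by construction, the data of a $k(X^{(2)})$-valued point of a smooth conic over $k(X^{(2)})$ for each member $C_{f_i, da_i}$ of the gluing datum. Since $k$ is algebraically closed, $k(X^{(2)})$ is a $C_1$ field, so by Tsen's theorem every smooth conic over it has a rational point; thus each $C_{f_i,da_i}$ has a $k(X^{(2)})$-point, and since $X$ is a curve this extends to a section over $X^{(2)}$ (as noted just before the proof of Lemma~\ref{L:SY class to pseudotame}, a section over the generic point extends). More intrinsically, following Remark~\ref{R:torsor}: for fixed $g = f_i$ and fixed $a = a_i$, the equation $\SY(f,g) = da$ is quadratic in $f_1,f_2,f_3$ over $k^{1/4}\cdot k(X) = k(X)$ (using $k$ perfect), hence has a nonzero solution by Tsen. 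By Lemma~\ref{L:SY class to pseudotame} any such section corresponds to a pseudotame $f \in R_X$, and then Lemma~\ref{L:pseudotame to tame} upgrades it to a tame morphism, giving the final assertion of (a).

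\emph{Part (b).} Fix a basepoint $g \in R_X$ that is pseudotame somewhere (e.g.\ take $g$ to be tame, which exists by part (a)). By Theorem~\ref{T:symbol}, the map $f \mapsto \SY(-,g)$ descends to an injection $R_X/\Gamma \hookrightarrow \Omega_{k(X)/k}$, and by surjectivity (Remark~\ref{R:torsor}, again via Tsen) it is a bijection onto all of $\Omega_{k(X)/k}$; additivity makes this an isomorphism of torsors for the additive group $\Omega_{k(X)/k}$. Now restrict to the subset of $\Gamma$-orbits of pseudotame elements. By Lemma~\ref{L:pseudotame and regular}, an orbit $\Gamma f$ consists of pseudotame elements exactly when $\SY(f,g)$ is regular at every point where $g$ is pseudotame, and (by globalizing over a covering $U_i$ with maps $f_i$ pseudotame on $U_i$) exactly when the class it defines is a coboundary in $H^1(X^{(1)},B)$, i.e.\ lands in the image of $H^0(X^{(1)},B)$ under the relevant map. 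Chasing through Definition~\ref{D:torsor}, this identifies the pseudotame orbits precisely with the $k$-points of $\calS_X$, the set of coboundaries arising from $0$-cochains $(a_i)$; the torsor structure for $H^0(X^{(1)},B)$ is the one built into $\calS_X$. Finally, Lemma~\ref{L:SY class to pseudotame} makes this identification compatible with the one there: a pseudotame $f$ gives $da_i := \SY(f,f_i)$, hence a point of $\calS_X$, and conversely, so the symbol map realizes the claimed bijection with $\calS_X(k)$.

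\emph{Main obstacle.} The genuine content is entirely in part (a) and is the single input of Tsen's theorem; part (b) is bookkeeping, but the delicate point there is to verify that ``$\SY(f,g)$ regular wherever $g$ is pseudotame'' is equivalent to ``the cohomology class is a coboundary,'' which requires care in matching the local regularity condition of Lemma~\ref{L:pseudotame and regular} with the Čech description of $\calS_X$ in Definition~\ref{D:torsor}, and in checking the torsor structures agree on the nose rather than merely abstractly.
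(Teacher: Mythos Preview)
Your proposal is correct and follows essentially the same approach as the paper: both parts rest on Tsen's theorem (for (a) to produce a section of the conic bundle, and for (b) to ensure surjectivity of $\SY(-,g)$ onto the relevant target), combined with Theorem~\ref{T:symbol} for uniqueness and Lemma~\ref{L:pseudotame and regular} for the pseudotame characterization, with Lemma~\ref{L:SY class to pseudotame} supplying the identification with $\calS_X(k)$. Your phrasing in (b) about ``the class it defines is a coboundary in $H^1(X^{(1)},B)$'' is slightly garbled---that class is always zero by Lemma~\ref{L:coboundary}, and what you really mean is that the $0$-cochain $(\SY(f,f_i))_i$ has regular components---but you immediately recover the correct statement via Lemma~\ref{L:SY class to pseudotame}, so this is cosmetic rather than a gap.
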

\begin{proof}
To obtain (a), apply Tsen's theorem (Remark~\ref{R:torsor}).
To obtain (b),
note that for any pseudotame $f \in R_X$ and any $a \in H^0(X^{(1)}, B)$, Tsen's theorem again implies the existence of some $g \in R_X$ with $\SY(f,g) = da$;
Theorem~\ref{T:symbol} implies that $\Gamma g$ is uniquely determined by $f$ and $a$.
Lemma~\ref{L:pseudotame and regular} then implies that $g$ is pseudotame, and
Theorem~\ref{T:symbol} again provides the {identification with $\calS_X(k)$}.
\end{proof}

We note in passing the following open problem related to this result.

\begin{problem} \label{prob:degree of tame}
Suppose that $k$ is algebraically closed (and $p=2$). 
For each positive integer $g$, what is the smallest integer
$c(g)$ for which every curve $X$ of genus $g$ over $k$ admits a tame morphism  to $\PP^1_k$ of degree at most $c(g)$?
\end{problem}

\begin{remark}
Sugiyama--Yasuda show that in Problem~\ref{prob:degree of tame}, $c(g)$ exists and is at most $144g^2 + 66g-3$ \cite[Theorem~5.2]{SY18}. However, we expect the correct bound to be linear in $g$. 
\end{remark}

We next deduce Theorem~\ref{T:main1}.
\begin{defn}
For each element of the SY torsor of $X$, the corresponding SY bundle defines a class in $\Br(X^{(2)})[2]$,
which is trivial if and only if the bundle admits a section.
Each such class is called an \emph{SY class} of $X$.
\end{defn}

The following seems plausible, but we were unable to verify it. In addition to it being true when $k$ is finite or algebraically closed, see Theorem~\ref{T:supersingular elliptic} for another bit of corroborating evidence.
\begin{conj}
There is only one SY class associated to $X$; that is, the construction is
 independent of the choice of an element of the SY torsor.
\end{conj}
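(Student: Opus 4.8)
The plan is to recast the conjecture as the vanishing of a difference of Brauer classes, make that difference explicit through the conic-bundle description, and then kill it by a residue computation modeled on the proof of Lemma~\ref{L:coboundary}. Fix an affine covering $\{U_i\}$ and pseudotame morphisms $f_i$ as in Definition~\ref{D:torsor}, fix one element $P$ of the SY torsor represented by a cochain $(a_i)$ with $a_i \in H^0(U_i^{(1)}, B)$, and write $\beta_P \in \Br(X^{(2)})[2]$ for the associated SY class. Every other element of the torsor has the form $P + h$ for a \emph{global} section $h \in H^0(X^{(1)}, B)$, represented by $(a_i + h)$. Since $\calS_X$ is an affine space over $k$ (and assuming $X$ non-ordinary, else there is nothing to prove), it suffices to show that
\[
\epsilon(h) := \beta_{P+h} - \beta_P \in \Br(X^{(2)})[2]
\]
vanishes for every $h$ --- equivalently, that the class of the total SY bundle $Z_X$ in $\Br(X^{(2)} \times_k \calS_X)[2]$ is pulled back from $\Br(X^{(2)})$ along the first projection. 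Note that this does \emph{not} follow from homotopy invariance of the Brauer group, which fails on the $p$-primary part in characteristic $p$; the content is precisely that this particular class lands in the homotopy-invariant subgroup.

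To compute $\epsilon(h)$ I would work with the conic bundles $C_{f_i, da_i}$ of Definition~\ref{D:torsor}: at the generic point $C_{f_i, da_i}$ is the Severi--Brauer conic of a characteristic-two Artin--Schreier/Kummer quaternion symbol of the shape $[b_i^2 f_i, b_i)$, where $b_i$ is determined by $b_i^2\, df_i = a_i$ (modulo the Frobenius-twist bookkeeping built into the definition). Passing from $P$ to $P + h$ replaces $b_i$ by $b_i + c$, where $c^2\, df_i = h$; the only feature of $c$ that one is entitled to use is that $c^2\,df_i$ is a global section of $B$. Expanding the two symbols via bi-additivity and the relations of the characteristic-two symbol (notably $[u, u) = 0$), $\epsilon(h)$ collapses to a sum of symbols whose Artin--Schreier slots are governed by $c$, and the conjecture becomes the vanishing of this residual class.

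For the vanishing, the model is the proof of Lemma~\ref{L:coboundary}. The class $\epsilon(h)$ is unramified on $X^{(2)}$ by construction, so one must show it is trivial; the natural route is to pair it against suitable test classes and, via the autoduality \eqref{eq:autoduality} and the residue formula \eqref{eq:pairing explicit}, reduce the pairing to a sum of residues that vanishes by the residue theorem applied to the exact differential underlying the global section $h$ --- exactly as the differential $dh$ made the sum in Lemma~\ref{L:coboundary} vanish. Structurally there is extra leverage from Remark~\ref{R:double cover construction}: every SY bundle becomes a projectivized rank-two bundle after Frobenius twist, so $\epsilon(h)$ already lies in a correspondingly small, Frobenius-killed subgroup of $\Br(X^{(2)})[2]$. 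Further positive evidence, beyond the finite and algebraically closed cases where $\Br(X^{(2)})$ or the SY classes vanish outright, is Theorem~\ref{T:supersingular elliptic}, which verifies the conjecture for supersingular elliptic curves by a direct computation of just this kind.

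The hard part --- and the reason this is left as a conjecture --- is that the symbol manipulations in the second paragraph are self-referential and do not by themselves produce the vanishing: one must feed in the globality of $h$ in an essential way, and it is not clear that the residue bookkeeping closes up, especially when $X$ is non-ordinary, where $B$ has nonzero sections and the auxiliary $f_i$ cannot be chosen uniformly over $X$. A second, related difficulty is that, unlike in Lemma~\ref{L:coboundary}, one is here trying to show that a \emph{difference} of two individually nonzero classes is zero rather than that a single cohomology class vanishes; in particular there is no ready-made Serre-duality pairing on $X^{(1)}$ that detects $\epsilon(h)$, so pinning down a pairing that sees $\epsilon(h)$ itself rather than merely its (already vanishing) ramification is itself part of the problem. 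Finding such a pairing --- or, better, a conceptual reason for the cancellation, perhaps via the algebraic $K$-theory interpretation suggested in Remark~\ref{R:symbol} --- is the natural line of attack.
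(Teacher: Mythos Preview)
The statement you are addressing is labeled in the paper as a \emph{conjecture}, not a theorem; the authors explicitly write ``The following seems plausible, but we were unable to verify it'' and offer no proof. There is therefore no proof in the paper to compare your proposal against.

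Your proposal is, appropriately, not a proof either: you outline a line of attack (recasting the question as the vanishing of $\epsilon(h) = \beta_{P+h} - \beta_P$, expressing each SY conic via a characteristic-$2$ quaternion symbol, and hoping a residue computation in the spirit of Lemma~\ref{L:coboundary} will kill the difference), and then you correctly identify why this does not close. Your observation that $p$-primary homotopy invariance of the Brauer group fails is exactly the point --- it is why the conjecture is nontrivial --- and your remark that one needs a pairing that detects $\epsilon(h)$ itself rather than its ramification is an honest statement of the gap. The evidence you cite (finite $k$, algebraically closed $k$, and Theorem~\ref{T:supersingular elliptic}) is precisely the evidence the paper records.

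In short: there is no discrepancy to flag, because neither you nor the paper claims a proof. Your write-up is a sensible discussion of the problem and its difficulties, but it should be presented as such rather than as a proof attempt.
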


\begin{theorem} \label{T:finite to tame}
If $k$ is finite, then every SY class of $X$ is trivial. Consequently, by Lemma~\ref{L:pseudotame to tame} and Lemma~\ref{L:SY class to pseudotame}, $X$ admits a tame morphism to $\PP^1_k$.
\end{theorem}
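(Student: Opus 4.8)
The plan is to reduce the statement to a cohomological computation over a finite field. By definition, each SY class lies in $\Br(X^{(2)})[2]$; since $X^{(2)}$ is a smooth projective curve over the finite field $k$, its Brauer group is governed by class field theory, and in particular it injects into the direct sum of the local Brauer groups $\bigoplus_{x} \Br(k(X^{(2)})_x)$ via the residue maps, fitting into the exact sequence $0 \to \Br(X^{(2)}) \to \bigoplus_{x \in (X^{(2)})^\circ} \Br(k(X^{(2)})_x) \to \QQ/\ZZ$. So to show an SY class vanishes it suffices to show that the associated conic bundle has good reduction everywhere, i.e.\ that its local invariant at every closed point of $X^{(2)}$ is zero.

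The key input for this is that the SY bundle is, by construction, a bundle of \emph{smooth} conics over all of $X^{(2)}$ (this is how $C_{g,da}$ was defined in \S\ref{sec:conic bundle}, using that $f_i$ is pseudotame on $U_i$ and invoking Lemma~\ref{L:pseudotame and regular} to get a regular — in fact everywhere smooth — conic over each $U_i^{(2)}$, and these glue). A smooth conic over a complete discretely valued field with residue field a subfield of a finite field has trivial Brauer class: over the residue field (a finite field) every conic has a rational point by Chevalley--Warning / Wedderburn, hence the conic has good reduction and the local invariant is trivial. Therefore every local invariant of the SY class vanishes, and by the exactness of the sequence above the SY class is trivial in $\Br(X^{(2)})[2]$. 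One subtlety worth flagging: we should make sure the residue fields appearing are genuinely finite, which holds since $k$ finite implies $k(X^{(2)})$ is a global function field and its completions have finite residue fields; so this is routine.

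With triviality of the SY class established, we conclude as indicated: triviality of the class means the corresponding SY bundle admits a section over $X^{(2)}$ (equivalently over $k(X^{(2)})$, since $X^{(2)}$ is a curve), which by Lemma~\ref{L:SY class to pseudotame} produces a pseudotame morphism $f \in R_X$ (here we use that $k$ finite is in particular perfect, so the hypotheses of \S\ref{sec:pseudotame morphisms} and of Lemma~\ref{L:SY class to pseudotame} apply), and then Lemma~\ref{L:pseudotame to tame} upgrades this to a genuine tame morphism $X \to \PP^1_k$.

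The main obstacle — really the only nontrivial point — is making clean the assertion that the SY bundle is a bundle of smooth conics over all of $X^{(2)}$, including at the finitely many points not covered conveniently by a single chart. This is where the construction in Definition~\ref{D:torsor} and the coordinate-independence statement ($C_{g,da} = C_{h, da + \SY(g,h)}$) do the work: over each $U_i^{(2)}$ the defining equation $T_1T_3 + T_2^2 + b(T_1^2 + gT_3^2)$ cuts out a smooth conic bundle because the relevant section $b^2 g \in \Gamma(U_i^{(2)}, B^{(1)})$ is nonvanishing there (as $f_i$ is pseudotame on $U_i$), and the gluing data is compatible. Once one accepts that input, the rest is a direct application of the structure of the Brauer group of a curve over a finite field together with the vanishing of conic Brauer classes over finite fields.
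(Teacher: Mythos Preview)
Your argument is correct and follows essentially the same route as the paper: both invoke the Albert--Brauer--Hasse--Noether theorem to inject the Brauer class into $\bigoplus_v \Br(K_v)$, then use that the SY bundle is by construction a smooth conic bundle over all of $X^{(2)}$ so that each local class is killed by lifting a rational point from the finite residue field via Hensel. One small quibble: in your last paragraph, the smoothness of the conic $T_1T_3 + T_2^2 + b(T_1^2 + gT_3^2)$ does not depend on any nonvanishing of $b$ or $g$---in characteristic~$2$ this conic is smooth for all values of the coefficients, as one checks directly that the only candidate singular point $[0:1:0]$ does not lie on it---so the explanation you give there is not quite the right one, though the conclusion stands.
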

\begin{proof}
Since $k$ is finite, class field theory implies that the Brauer group of $X$ is trivial. Indeed, by the Albert--Brauer--Hasse--Noether theorem, we have an injection
\[
\Br(K)[2] \to \bigoplus_{v \in X^{(2)}} \Br(K_v)[2],
\]
where $K = k(X^{(2)})$ and $K_v$ is the fraction field of the completion of the local ring $\calO_{X^{(2)},v}$ at $v$. When a conic bundle over a local field has good reduction, it is automatically trivial, as a section over the residue field can be lifted by means of Hensel's lemma. The theorem follows, as an SY bundle {by construction is a conic bundle over all of $X^{(2)}$}.
\end{proof}

We will obtain Theorem~\ref{T:main2} by expanding on the same line of reasoning.

\begin{theorem} \label{T:extension to realize tame}
For $g_X > 1$, there exists a function $N = N(g_X)$ satisfying the following conditions.
\begin{enumerate}
\item[(a)]
Each SY class of $X$ is killed by some field extension $k'/k$ of degree at most $N$.
\item[(b)]
There exists a further field extension $k''/k$ of degree at most $N$ such that $X_{k''}$ admits a tame morphism to $\PP^1_{k''}$.
\end{enumerate}
\end{theorem}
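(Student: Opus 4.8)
The plan is to bound the degree needed to trivialize an SY class, and then the degree needed to pass from a pseudotame to a tame morphism, uniformly in $g_X$. For part (a), the key point is that an SY class lies in $\Br(X^{(2)})[2]$, and $X^{(2)}$ is a smooth projective curve of genus $g_X$ over $k$. Over any field, a Brauer class on a curve is represented by a conic bundle, and such a class is killed by the function field of the conic bundle regarded as a surface; more usefully, it is killed by a finite separable extension $k'/k$ of degree bounded in terms of invariants of the conic bundle. The cleanest route is: by Tsen's theorem the SY class becomes trivial over $k(X^{(2)})^{\mathrm{sep}}\cdot \overline{k}$, hence already over $\overline{k}$; so the class is split by $\overline{k}\cdot k(X^{(2)})$. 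One then needs an \emph{effective} version saying it is split by a finite extension of $k$ of controlled degree. For this I would either (i) use that the SY bundle $C \subseteq Y$ is a conic bundle sitting inside a $\PP^2$-bundle whose associated rank-$3$ bundle has bounded numerical invariants (controlled by $g_X$, since $B$ and $\pi^{(2)}_*\mathcal{O}_X/\mathcal{O}_{X^{(2)}}$ have degrees and ranks depending only on $g_X$), and then quote a geometric lemma on the field of definition of a point on a Brauer--Severi-type variety over $k$; or (ii) spread $\mathcal{S}_X$ and $Z_X$ out over a finitely generated $\ZZ$-algebra and use a specialization/Noetherian argument. Approach (i) is more honest to the paper's geometric style: a conic bundle over a genus-$g_X$ curve acquires a rational point over an extension of degree bounded by a function of $g_X$ and the degree of the discriminant divisor, which is itself bounded by $g_X$.

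For the reduction from pseudotameness to tameness in part (b), the point is to re-examine the proof of Lemma~\ref{L:pseudotame to tame} and check that, once $k$ is replaced by an extension $k'$ over which some SY bundle has a section, every auxiliary choice in that proof can be made over a further extension of degree bounded in terms of $g_X$ alone. The section of the SY bundle gives, by Lemma~\ref{L:SY class to pseudotame}, a pseudotame $f \in R_{X,k'}$; the recipe in Lemma~\ref{L:pseudotame to tame} then produces a tame $g \in \Gamma f$ using: a choice of point $\infty \in X^\circ$, various large exponents $e_i$, and elements $h_2,h_3,h_4$ cut out by interpolation conditions at finitely many points $Y$. The number of points in $Y$ is bounded in terms of $g_X$ and the degree of $f$, and the interpolation uses Riemann--Roch on $X$; all of this is insensitive to the base field \emph{except} that we need $\infty$ and the points of $Y$ to be rational (or at least that the residue-field extensions involved be controlled). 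So part (b) follows by: first extend to $k'$ as in (a); then extend to $k''$ of bounded further degree so that $X_{k''}$ has a rational point $\infty$ and so that the finitely many geometric points where interpolation is imposed become rational — the number of such points, hence the degree of $k''/k'$, is a function of $g_X$ and $\deg f$, and $\deg f$ is itself controlled once the section of the bounded-invariant conic bundle is controlled.

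The main obstacle I expect is the effective bound in part (a): Tsen's theorem is qualitative, and one must extract a degree bound for splitting the SY class that depends only on $g_X$. The subtlety is twofold. First, $\mathcal{S}_X$ is a torsor under $H^0(X^{(1)},B)$, which can be a positive-dimensional affine space, so "an SY class" really means a class attached to each $k$-point of $\mathcal{S}_X$; one must bound the degree needed to kill \emph{any} such class uniformly. Since $Z_X$ is a conic bundle over $X^{(2)}\times_k \mathcal{S}_X$ with fiberwise-bounded invariants, I would argue that over a suitable finite extension $k'/k$ of bounded degree the generic fiber of $Z_X \to \mathcal{S}_X$ (a conic bundle over $X^{(2)}_{k'(\mathcal{S}_X)}$, still a curve over the field $k'(\mathcal{S}_X)$) acquires a rational point, and then specialize; the degree bound comes from the discriminant degree of this conic bundle, which is read off from $\deg B$ and $\deg(\pi^{(2)}_*\mathcal{O}_X/\mathcal{O}_{X^{(2)}})$, i.e.\ from $g_X$. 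Second, one must be careful that "killed over $k'$" (triviality of the Brauer class) indeed yields a section of the \emph{specific} conic bundle — but this is automatic since the class is represented by that very conic bundle, so its triviality is equivalent to the existence of a section. The genus-$1$ case is excluded precisely because there $\mathcal{S}_X$ and the relevant invariants do not give a uniform bound (cf.\ the counterexample in \S\ref{sec:elliptic curves}), so I would flag at the outset that $g_X > 1$ is used to guarantee $\deg B = g_X - 1 > 0$ behaves uniformly and, more importantly, that there is no arithmetic obstruction forcing unbounded degree.

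\textbf{Summary of steps, in order.} First, fix the numerical invariants: record that $B$ has rank $1$ and degree $g_X-1$ on $X^{(1)}$, that $\pi^{(2)}_*\mathcal{O}_X/\mathcal{O}_{X^{(2)}}$ has rank $3$ and bounded degree, and hence that every SY bundle $C\subseteq Y$ is a conic bundle over $X^{(2)}$ with discriminant divisor of degree bounded by an explicit function of $g_X$. Second, prove (a): show the associated Brauer class is split by a finite extension $k'/k$ of degree $\le N_1(g_X)$, by exhibiting a point on $C$ over $k'(X^{(2)})$ — via Tsen over $\overline{k}$ plus an effective descent of the field of definition of such a point, using the bounded discriminant degree — and do this uniformly over $\mathcal{S}_X$ by working over the generic point of $\mathcal{S}_X$. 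Third, over such a $k'$, invoke Lemma~\ref{L:SY class to pseudotame} to get a pseudotame $f\in R_{X,k'}$ of degree bounded by $N_2(g_X)$. Fourth, prove (b): rerun Lemma~\ref{L:pseudotame to tame} over a further extension $k''/k'$ of degree $\le N_3(g_X)$ chosen so that $X_{k''}$ has a rational point $\infty$ and the finitely many interpolation points (whose number is bounded by $g_X$ and $\deg f \le N_2(g_X)$) are $k''$-rational; conclude that $X_{k''}$ admits a tame morphism to $\PP^1_{k''}$. Finally, set $N(g_X) = \max\{N_1,\,N_1 N_3\}$ (or simply $N = N_1 N_3$), absorbing $N_2$ into $N_3$, to obtain a single bound as in the statement.
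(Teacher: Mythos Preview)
Your approach to (a) has a genuine gap at precisely the point you flag. The SY conic bundle is \emph{smooth} over all of $X^{(2)}$ (the fiber $T_1T_3 + T_2^2 + b(T_1^2 + gT_3^2) = 0$ is a smooth conic everywhere), so its discriminant divisor is empty and your proposed invariant carries no information; more broadly, bounded numerical invariants of the ambient rank-$3$ bundle do not by themselves bound the splitting degree over $k$, because the obstruction is arithmetic in $k$ rather than geometric over $X^{(2)}_{\overline{k}}$. There is no off-the-shelf ``effective Tsen'' lemma of the shape you invoke, and your alternative (ii) via spreading out and Noetherian induction would at best produce a bound depending on the particular $X$, not a uniform $N(g_X)$.

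The paper's mechanism is different and is the missing idea. First extend $k$ so that $X$ acquires a rational point: since $g_X>1$, a nonzero section of $\omega_X$ gives an effective divisor of degree $\le 2g_X-2$, hence a closed point of bounded residue degree (this is the actual role of the hypothesis $g_X>1$; for genus $1$ the index of $X$ need not be bounded, which is why that case is excluded --- not the reason you suggest). With a rational point in hand, the Leray spectral sequence for $X^{(2)} \to \Spec k$ yields $\Br(k)[2] \to \Br(X^{(2)})[2] \to H^1_{\et}(k,\underline{\Pic}(X^{(2)}))[2]$, and a Kummer-type sequence lifts the right-hand class to one in $H^1_{\fppf}(k,J[2])$ with $J$ the Jacobian. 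Now the bound becomes transparent: $J[2]$ is a finite flat group scheme of order $2^{2g_X}$, so via the connected--\'etale sequence one kills the \'etale part over the residue field of a closed point of $J[2]^{\et}$ and the connected part by a purely inseparable extension of degree bounded by the length of $J[2]^{\mathrm{conn}}$; a final quadratic extension disposes of the residual class in $\Br(k')[2]$. Your outline for (b) is close to the paper's, though the paper is more economical: once (a) is done, Lemmas~\ref{L:SY class to pseudotame} and~\ref{L:pseudotame to tame} already yield a tame morphism when $k'$ is perfect, and for imperfect $k'$ one only needs a further purely inseparable extension of bounded degree to run Lemma~\ref{L:pseudotame to tame} --- there is no need to make all interpolation points individually rational.
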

\begin{proof}

First, we show that there exists a field extension $k'/k$, of degree depending only on $g_X$, for which $X(k') \neq \emptyset$.
Since we assumed $g_X > 1$, by Riemann-Roch $h^0(X, \omega_X)>0$, and so there exists a nonzero irreducible divisor $D$ such that $\deg D \leq 2g-2$. Taking $k'$ to be the {residue field of $D$} concludes the proof. 

We may assume hereafter that $X$ has a $k$-rational point.
Since $\Br(X^{(2)}_{\overline{k}})$ vanishes by Tsen's theorem, the Leray spectral sequence for the structure morphism $H^i(k, H^j_{\fppf}(X_{\overline{k}}, \GG_m)) \implies H^{i+j}_{\fppf}(X, \GG_m)$ yields an exact sequence
{
\[
\Br(k) \to \Br(X^{(2)}) \to H^1_{\et}(k, \underline{\Pic}(X^{(2)})) = H^1_{\fppf}(k,\underline{\Pic}(X^{(2)})).
\]
Since $X^{(2)}$ admits a rational point, a class in $\Br(k)$ which becomes 2-torsion in $\Br(X^{(2)})$ is automatically 2-torsion. Thus, we get a short exact sequence
 \[
\Br(k)[2] \to \Br(X^{(2)})[2] \to H^1_{\et}(k, \underline{\Pic}(X^{(2)}))[2].
\]
It suffices to bound the extension $k'/k$ required to kill the resulting class in $H^1_{\et}(k,\underline{\Pic}(X^{(2)}))[2]$,
as then we are only left to kill a class in $\Br(k')[2]$ with a further quadratic extension of $k'$
(this step is sometimes unnecessary; see Remark~\ref{R:last quadratic extension}).
Using the exact sequence
\begin{equation} \label{eq:Selmer sequence}
0 \to \Pic(X^{(2)})/ 2 \Pic(X^{(2)}) \to H^1_{\fppf}(k, \underline{\Pic}(X^{(2)})[2]) \to H^1_{\fppf}(k, \underline{\Pic}(X^{(2)}))[2] \to 0
\end{equation}
it further suffices to exhibit an extension $k'$ that kills a specified class in $H^1_{\fppf}(k, \underline{\Pic}(X^{(2)})[2])
= H^1_{\fppf}(k, \underline{\Pic}^0(X^{(2)})[2]) = H^1_{\fppf}(k, J[2])$
where $J$ is the Jacobian of $X^{(2)}$.}

Consider the connected-\'etale sequence
\[
1 \to J[2]^{\mathrm{conn}} \to J[2] \to J[2]^{\et} \to 1.
\]
To kill the image of the specified class in $H^1_{\et}(k, J[2]^{\et})$, it is enough to find a field extension $k'$ of $k$ for which $J[2]^{\et}$ admits a $k'$-rational point. If $x$ is any closed point of the scheme $J[2]^{\et}$, then we can take $k'$ to be the residue field $k(x)$ at $x$. Since the degree of this extension is bounded by the length of the zero-dimensional scheme $J[2]^{\et}$, which in turn depends only on the genus of $X$, the claim for the \'etale part is proven.

After replacing $k$ by $k'$, we can assume that our class lies in $H^1_{\fppf}(k, J[2]^{\mathrm{conn}})$. The morphism $J[2]^{\mathrm{conn}} \to \Spec k$ is a universal homeomorphism, and so it factors through the $r$-th power of Frobenius for some $r \in \mathbb{N}$ bounded in terms of the length of $J[2]^{\mathrm{conn}}$, and thus in terms of the genus. Taking a base change of $k$ by this $r$-th power makes the associated torsor acquire a rational point, and so kills our class in $H^1_{\fppf}(k, J[2]^{\mathrm{conn}})$.

This yields (a). To deduce (b), note first that if $k$ is perfect, then 
$X_{k'}$ itself admits a tame morphism by Lemma~\ref{L:pseudotame to tame} 
and Lemma~\ref{L:SY class to pseudotame}.
To handle the general case, note that the proof of  Lemma~\ref{L:pseudotame to tame}  still yields a function $f \in R_{X,k'}$ which becomes pseudotame after base extension to the perfect closure of $k'$;
then note that one may follow through the proof of Lemma~\ref{L:pseudotame to tame} over a purely inseparable extension of $k'$ whose degree can be bounded solely in terms of $g_X$.
\end{proof}

\begin{remark} \label{R:last quadratic extension}
If $k$ is perfect and $X$ has a $k$-rational point, then that gives a section of the map $\Br(k) \to \Br(X^{(2)})$.
Mapping any SY class back to $\Br(k)$ then gives a trivial class because our original conic bundle has a purely
inseparable multisection. Consequently, in this case it is not necessary to kill any classes in $\Br(k')[2]$.
\end{remark}

\section{Elliptic curves}
\label{sec:elliptic curves}

To illustrate the previous results in a concrete setting, we compute the obstruction to existence of a tame morphism for ordinary and supersingular elliptic curves in characteristic $2$.

Throughout \S\ref{sec:elliptic curves}, assume that $k$ is perfect, and let 
$\varphi: k \to k$ denote the absolute Frobenius morphism on $k$.

\begin{theorem} \label{T:elliptic curve obstruction}
Let $X$ be the elliptic curve over $k$ given by the affine model
\[
y^2 + xy = x^3 + ax^2 + b
\]
for some parameters $a,b \in k$ with $b \neq 0$.
(This is the generic form of an ordinary elliptic curve with $j$-invariant $b^{-1}$.)
\begin{enumerate}
\item[(a)]
If one of 
$a,b,a+b$ belongs to the image of $\varphi+1$, then $X$ admits a tame morphism. (Note that this always holds if $k$ is finite or algebraically closed.)
\item[(b)]
Conversely, if $X(k)$ is torsion and none of
$a,b,a+b$
belongs to the image of $\varphi+1$, then $X$ does not admit a tame morphism.
\end{enumerate}
\end{theorem}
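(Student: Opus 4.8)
The plan is to make the general machinery of the SY torsor and SY bundle completely explicit for an ordinary elliptic curve, and then read off the obstruction as a Brauer class over $k$. First I would set up coordinates: on the elliptic curve $X: y^2 + xy = x^3 + ax^2 + b$, take the point at infinity as the base point $\infty$, so $R = k[x,y]$ with $\deg x = 2$, $\deg y = 3$. The natural choice for the ``splitting function'' $g$ is $g = x$ (or a closely related function), since $dx$ generates $\omega_{X/k}$ and the canonical theta characteristic $B$ on $X^{(1)}$ is the line bundle represented by $dx$, which is nowhere vanishing on the affine curve; one checks that $x$ is pseudotame at $\infty$ and everywhere (its differential vanishes nowhere on the affine part, and at $\infty$ one inspects the expansion). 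Since $X$ is ordinary, $H^0(X^{(1)}, B) = 0$, so the SY torsor is a single point and there is a unique SY class; the problem is just to compute it and decide when it vanishes in $\Br(X^{(2)})$, equivalently (since $X^{(2)}$ has a rational point and, $k$ being perfect, Remark~\ref{R:last quadratic extension} applies) when it comes from a trivial class.

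Next I would write down the SY conic bundle explicitly. By Lemma~\ref{L:SY class to pseudotame}, a pseudotame morphism corresponds to a $k(X^{(2)})$-point of $C_{g,da}$ where $da = \SY(f,g)$, i.e.\ to a solution $f = f_0^4 + f_1^4 g + f_2^4 g^2 + f_3^4 g^3$ of $\SY(f,g) = 0$; but we actually want the class of the bundle itself, which lives in $Y = \PP(\pi^{(2)}_*\calO_X/\calO_{X^{(2)}})$ and is cut out (in the homogeneous coordinates $T_1,T_2,T_3$ dual to $g,g^2,g^3$) by $T_1T_3 + T_2^2 + b(T_1^2 + gT_3^2)$ where $a = b^2 g$ represents the relevant section of $B^{(1)}$. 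The key computation is to identify which section of $B^{(1)}$ over all of $X^{(2)}$ is forced by the gluing: one must cover $X$ by the affine chart $U$ and a chart $V$ around $\infty$, pick a pseudotame $f_V$ on $V$ (e.g.\ obtained from $x$ by the tame-ification recipe of Lemma~\ref{L:pseudotame to tame}, or more directly a uniformizer at $\infty$), compute $\SY(x, f_V)$ as a differential regular on $U \cap V$, and see what global conic over $X^{(2)}$ this glues to. This is the step I expect to be the main obstacle: carrying out the gluing carefully enough to pin down the conic — equivalently, computing the precise local expansions of $\SY$ near $\infty$ in terms of $a,b$ — and recognizing the resulting $\Br(X^{(2)})[2]$ class. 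I anticipate that after the reductions of Theorem~\ref{T:extension to realize tame} the class lands in (a piece of) $H^1_{\et}(k, J[2])$ or directly in a quaternion-type symbol over $k(X^{(2)})$, and that the computation collapses to a statement about whether a specific quadratic form is isotropic.

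Once the conic bundle is explicit, part (a) is the easy direction: I would exhibit a section (hence a pseudotame, hence by Lemma~\ref{L:pseudotame to tame} a tame, morphism) whenever one of $a, b, a+b$ is of the form $c^2 + c$ — the three cases presumably corresponding to the three coordinate points $T_1, T_2, T_3$ (or to the three $2$-torsion points of $X$, since the Weierstrass form $y^2+xy = x^3+ax^2+b$ has its nontrivial $2$-torsion governed by exactly these Artin–Schreier conditions on $a$, $b$, $a+b$). The parenthetical remark that this always holds over finite or algebraically closed $k$ is then immediate since $\varphi + 1 = \wp$ is surjective there. For part (b), the hypothesis that $X(k)$ is torsion is exactly what is needed to run the connected–étale / Leray argument of Theorem~\ref{T:extension to realize tame} in reverse: I would argue that if the SY class were trivial then, tracing back through the exact sequences
\[
\Br(k)[2] \to \Br(X^{(2)})[2] \to H^1_{\et}(k, \underline{\Pic}(X^{(2)}))[2]
\]
and \eqref{eq:Selmer sequence}, one would obtain a $k$-rational point on $X^{(2)}$ coming from $X$ other than the torsion already present, or a splitting of a $\wp$-type extension that the three conditions on $a,b,a+b$ were precisely chosen to obstruct. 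The torsion hypothesis rules out the escape route where a non-torsion rational point trivializes the relevant $H^1$, so the contrapositive of (a) becomes sharp: none of $a, b, a+b$ in the image of $\varphi+1$ then forces the SY conic to be anisotropic over $k(X^{(2)})$, hence no pseudotame (so no tame) morphism exists. The delicate point will be checking that the single SY class genuinely captures all pseudotame morphisms — which is fine here because ordinarity makes the SY torsor trivial — and that the ``$X(k)$ torsion'' hypothesis is used exactly where the reverse implication needs a finiteness input.
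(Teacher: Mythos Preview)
Your setup contains a small but consequential error: $x$ is \emph{not} pseudotame on all of the affine chart. At the nontrivial $2$-torsion point $Q = (0, b^{1/2})$ one has $v_Q(x) = 2$, and no fourth power can shift this to an odd valuation; likewise $x$ fails to be pseudotame at $\infty$. The paper therefore covers $X$ by $U = X \setminus \{\infty, Q\}$ (where $x$ is pseudotame) and a second chart $V$ on which $x/(y+b^{1/2})$ is pseudotame, then computes $\SY(x/(y+b^{1/2}),x)$ explicitly and splits it as a $1$-coboundary to obtain the conic
\[
T_1 T_3 + T_2^2 + (1+b^{1/4}x^{-1})T_1^2 + (x+b^{1/4})T_3^2
\]
over $k(X^{(2)})$. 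Your plan for (a) is then essentially the paper's: once the conic is in hand, one writes down explicit rational points under each of the three Artin--Schreier hypotheses. (Your guess that the three cases correspond to ``the three $2$-torsion points'' is off, though: an ordinary elliptic curve in characteristic $2$ has only one nontrivial $2$-torsion point.)

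The genuine gap is in (b). Your proposal to ``run the Leray argument in reverse'' is not a proof strategy: if the SY class is trivial in $\Br(X^{(2)})[2]$, it maps to zero in $H^1_{\et}(k, \underline{\Pic}(X^{(2)}))[2]$, and there is nothing to trace back through the exact sequences. The paper's argument is quite different and substantially more concrete. Assuming $Z_X$ has a section, one writes $Z_X \cong \PP_X(\calE)$ for a rank-$2$ bundle $\calE$; the Frobenius description of Remark~\ref{R:double cover construction} then forces $F^*\calE \cong (F_*\calL_2) \otimes \calG^{-1}$ for some line bundle $\calG$, where $\calL_2$ is the unique $2$-torsion line bundle. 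The torsion hypothesis on $X(k)$ is used to normalize $\calG$. One then runs a case analysis (decomposable versus indecomposable $\calE_{\overline{k}}$) using the classification of bundles on elliptic curves, together with an explicit determination of the $8$-division field $k'' = k(\alpha,\beta)$ where $\alpha^2+\alpha = a$ and $\beta^2+\beta = b^{1/4}$; the three Artin--Schreier conditions control $\Gal(k''/k) \cong (\ZZ/8\ZZ)^\times$, and one checks that no candidate for $\calE_{\overline{k}}$ is Galois-stable. None of this structure is present in your sketch, and I do not see how a purely cohomological reversal of Theorem~\ref{T:extension to realize tame} could replace it.
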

\begin{proof}
For convenience, we write $A = a^{1/4}, B = b^{1/8}$, so that our curve becomes
\[
y^2 + xy = x^3 + A^4x^2 + B^8.
\]
Since $X$ is ordinary {and $k$ is perfect}, it has a unique $k$-rational 2-torsion point, namely $Q = (0, B^4)$. 
As per \cite[(4)]{Kra77}, the multiplication-by-2 map is given by
\[
(x,y) \mapsto \left(x^2 + \frac{b}{x^2}, (x + \frac{y}{x})(x^2 + \frac{b}{x^2}) + \frac{b}{x^2} \right).
\]
Consequently, the 4-torsion points have the form $(x,y)$ where
\[
x = B^2, \qquad y^2 + B^2 y = B^6 + A^4 B^4 + B^8.
\]
The latter equation is equivalent to
\[
a = \left( \frac{y}{B^2} + B^2 \right)^2 + \frac{y}{B^2} + B^2;
\]
from this,
we see that $X$ contains a rational 4-torsion point iff $A$ is in the image of $\varphi+1$ on $k$, where $\varphi$ denotes the absolute Frobenius (equivalently, if there is a change of coordinates to make $A$ equal to $0$).

Let $U$ be the open set on which $x$ is regular and pseudotame; note that $U = X \setminus \{\infty, Q\}$.
Let $V$ be the open set on which $x/(y + B^4)$ is regular and pseudotame; then
$U \cup V = X$ because $x/(y + B^4)$ is a uniformizer at both $Q$ and $\infty$.
For $g = x, f = (y+B^4)x^3  \in \Gamma(\frac{x}{y+B^4})$,
we calculate by repeated differentiation and using the equation of the elliptic curve that
\begin{align*}
f &= (xy + A^2 x^2 + B^4x)^2 + (x^2 + B^2 x)^2 x \\
&= (y + (A^2+A)x + B^4)^4 + x^4x + (x + B^2)^4 x^2 + B^4 x^3
\end{align*}
and hence
\[
\SY\left(\frac{x}{y+B^4}, x\right) = \SY(f,g) = \left( \frac{x^2 + Bx + B^4}{x(x+B^2)} \right)^2 dx.
\]
The fact that $y$ does not appear is not an accident: it follows from the fact that the nontrivial automorphism of the curve over $\PP^1_k$ is given by $y \mapsto y+x$, which carries $f = (y+B^4)x^3$ to the element $(y+x+B^4)x^3 = f + x^4$ in the same $\Gamma$-orbit.)
By writing
\[
\SY(f,g) = \frac{x^2+B^4}{x^2} dx + \frac{B^2 }{x^2+B^4} dx,
\]
we express $\SY(f,g)$ as a 1-coboundary (this expression is unique because $X$ is ordinary; moreover, $dx$ has only a double pole at $\infty$, so $\frac{B^2}{x^2+B^4}dx$ is regular at $\infty$).
The SY bundle therefore has generic fiber $C_{g,d(x+B^4x^{-1})}$, which is to say the zero locus of
\begin{equation} \label{eq:EC conic}
T_1 T_3 + T_2^2 + (1 + B^2 x^{-1})T_1^2 + (x + B^2)T_3^2.
\end{equation}
 Note that for $u,v \in k$, this conic admits a point of the form $(T_1: u T_1 + v T_3: T_3)$ if and only if
$(1+B^2 x^{-1} + u^2)(x + B^2 + v^2)$ is in the image of $\varphi+1$ on $k(X)$; here we use that for $w_1,w_2 \in k$, $w_1x^2 + xy + w_2y^2$ has a section if and only if $x^2 + x + w_1w_2$ does.
By making the choices $(u,v) = (0,0), (1,B), (1, x/B)$
and noting that
\[
(x^{-1} y)^2 + (x^{-1} y) + (B^4 x^{-1})^2 + (B^4 x^{-1}) = x + B^4 x^{-1} + A^4,
\]
we obtain a point in case one of
\[
A^4, B^2, A^4+B^2
\]
is in the image of $\varphi+1$ on $k$; this proves (a).

To prove (b), suppose by way of contradiction that $X(k)$ is torsion; none of $a,b,a+b$ belongs to the image of $\varphi+1$;
and the SY bundle $Z_X \to X$ admits a section. Extending the previous calculation, 
we see that any 8-torsion point $(x,y)$ of $X$ satisfies
\[
x^2 + \frac{b}{x^2} = B^2.
\]
Define the extensions
\[
k' = k[\alpha]/(\alpha^2 + \alpha + A^4), \qquad k'' = k'[\beta]/(\beta^2+\beta+B^2).
\]
By rewriting the equation of the curve as
\[
\left( \frac{y}{x} \right)^2 + \frac{y}{x} = x + a + \frac{b}{x^2}
= x^2 + x + a + x^2 + \frac{B^8}{x^2},
\]
we see that over $k''$, $X$ acquires the $8$-torsion point $(\beta B, \beta B (\beta B + \alpha + \beta))$; hence
$k''$ is the 8-division field of $X$, so we have a distinguished identification $\Gal(k''/k) \cong (\ZZ/8\ZZ)^\times$.

Since $Z_X \to X$ admits a section, $Z_X$ is a ruled surface and hence has the form
$\PP_X(\calE)$ for some rank-2 vector bundle $\calE$ over $X$ \cite[Proposition~V.2.2]{Har77} (this result assumes $k$ is algebraically closed, but it works for $k$ perfect as well).
From Remark~\ref{R:double cover construction}, we obtain an isomorphism
\[
\PP_X(F_* \calL_2) \cong \PP_X(F^* \calE)
\]
where the map $F: X \to X$ is Frobenius
and $\calL_2$ is the unique 2-torsion line bundle on $X$;
it follows that $F_* \calL_2 \cong F^* \calE \otimes \calG$ for some line bundle $\calG$.
Note that over $k'$, we have 
\[
F_* \calL_2 \cong (F_* \calO_X) \otimes \calL_4 \cong \calL_4 \oplus (\calL_2 \otimes \calL_4)
\]
where $\calL_4$ is a 4-torsion line bundle. 

We continue using the classification of elliptic surfaces on $X_{\overline{k}}$ from \cite[Theorem~2.15]{Har77}.
Suppose first that $\calE$ is decomposable on $X_{\overline{k}}$. 
By comparing $F^* \calE$ with $F_* \calL_2$ and using the decomposition
of the latter, we see that $\deg(\calE)$ is even.
Consequently, $\deg(\calG)$ is even; 
by twisting $\calE$ by a suitable multiple of $\calL(\infty)$, we may ensure that $\deg(\calG) = 0$.
Since $X(k)$ is torsion, there is a minimal positive integer $m$ such that $\calG^{\otimes m}$ is trivial.
If $m$ is odd, then $\calG$ has a square root with which we can twist $\calE$ to force $\calG$ to become trivial.
If $m = 2r$, then $r$ is odd because $X$ does not have 4-torsion over $k$. so $\calG \otimes \calL_2$ admits a square root
and so we may reduce to the case $\calG = \calL_2$. But $(F_* \calL_2) \otimes \calL_2 \cong F_* \calL$, so in this case we
may also take $\calG$ to be trivial.

Since $\calG$ is now trivial, on $X_{\overline k}$ we have
\[
F^* \calE \cong \calL_4 \oplus (\calL_2 \otimes \calL_4).
\]
On $X_{\overline{k}}$, we may choose a square root $\calL_8$ of $\calL_4$ and see that $\calE$ must split as one of
\[
\calL_8 \oplus (\calL_4 \otimes \calL_8), 
(\calL_2 \otimes \calL_8) \oplus (\calL_4 \otimes \calL_8), 
\calL_8 \oplus (\calL_2 \otimes \calL_4 \otimes \calL_8),
(\calL_2 \otimes \calL_8) \oplus (\calL_2 \otimes \calL_4 \otimes \calL_8).
\]
However, none of these options can be the pullback of a bundle on $X$: 
one of the elements of $\Gal(\overline{k}/k)$ fixes $\calL_4$ and sends $\calL_8$ to $\calL_2 \otimes \calL_8$,
and none of the candidates for $\calE$ is preserved by this element.
This yields the desired contradiction.

Suppose now that $\calE$ is indecomposable on $X_{\overline{k}}$. Then after twisting $\calE$, we must have a nonsplit extension
\[
0 \to \calO \to \calE \to \calO(n\infty) \to 0
\]
with $n \in \{0,1\}$.
This pulls back to an extension
\[
0 \to \calO \to F^*(\calE) \to \calO(2n\infty) \to 0.
\]
Since $F^*(\calE) \otimes \calG \simeq F_* \calL_2$, we have that $\calG(n\infty)$ squares to $\wedge^2 F_* \calL_2 \cong \calO$, and so it must be isomorphic to either $\calO$
or $\calL_2$. In either case, as $(F_*\calL_2) \otimes \calL_2 \simeq F_*\calL_2$, we get a sequence
\[
0 \to \calO \to (F_* \calL_2) \otimes \calO(n\infty) \to \calO(2n\infty) \to 0.
\]
If $n=0$, then we may split $F_* \calL_2$ as above and then obtain a contradiction; so we must have $n=1$.
Thus we get a sequence
\[
0 \to \calO \to \calO(P_1) \oplus \calO(P_2) \to \calO(2\infty) \to 0,
\]
where $P_1$,$P_2$ are the two $4$-torsion points. In particular, the map $\calO \to \calO(2\infty)$ yields a section in $H^0(X, \calO(2\infty))$ corresponding to $P_1+P_2 \in |2\infty|$. This is impossible, because this map is a Frobenius pullback of $\calO \to \calO(\infty)$.
\end{proof}

One can generate explicit examples using Kramer's description of 2-descent for elliptic curves over function fields of characteristic $2$ \cite{Kra77}. Take $k_0 = \FF_2(t)$; let $k$ be the perfect closure of $k_0$; let $X_0$ be the curve
$y^2 + xy = x^3 + ax^2 + b$ over $k_0$ for some $a \in k_0, b \in k_0 \setminus \FF_2$ such that
$b$ is a square in $k_0$, and none of $a,b,a+b$ belongs to the image of $\varphi+1$ on $k$;
and let $X$ be the base extension to $k$.
Let $\pi: X_0 \to X_0'$ be the Frobenius isogeny, whose target is the curve
$y^2 + xy = x^3 + a^2x^2 + b^2$; let $\psi: X_0' \to X_0$ be the dual isogeny (Verschiebung).
Then the product of the orders of the $\psi$-Selmer and $\pi$-Selmer groups gives an upper bound on 
the order of $X_0(k)/2X_0(k)$. 

Suppose in particular that the $\psi$-Selmer group has order 2 and the $\pi$-Selmer group is trivial. { Since the Mordell-Weil theorem holds in this case (see \cite[Theorem~1.1]{Ghi10}
or \cite[Corollary~1.3]{Ros15}, which apply because $j(X_0) = b^{-1} \notin \overline{\FF}_2$),}
 $X_0(k)$ is finitely generated and so must be equal to the subgroup $\ZZ/2\ZZ$ generated by $(0, b^{1/2})$; meanwhile, $\pi$ induces a surjection $X_0(k) \to X'_0(k)$ and so $X'_0$ is also of rank 0. It follows that $X(k)$ is torsion (and in fact is isomorphic to $\ZZ/2\ZZ$), and so $X$ is an example to which
Theorem~\ref{T:elliptic curve obstruction} applies; consequently, $X$ admits no tame morphism to $\PP^1_k$.

The following computation in the \textsc{Magma} computer algebra system confirms that taking $a = t, b = t^6$ yields an example as above. This may also be confirmed by a hand calculation.

\begin{verbatim}
> F<t> := RationalFunctionField(GF(2));
> E := EllipticCurve([1,t,0,0,t^6]);
> G1, G2 := TwoIsogenySelmerGroups(E);
> Order(G1), Order(G2);
2 1
\end{verbatim}

We summarize the result as follows.
\begin{theorem}
There exists an ordinary elliptic curve over a perfect field $k$ of characteristic $2$ (namely the perfect closure of $\FF_2(t)$) which does not admit a tame morphism to $\PP^1_k$. {Explicitly we can take the elliptic curve given by the equation $y^2 + xy = x^3 + tx^2+t^6$ over $\FF_2(t^{1/p^{\infty}})$.}
\end{theorem}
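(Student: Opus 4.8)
The plan is to apply Theorem~\ref{T:elliptic curve obstruction}(b) to the curve $y^2 + xy = x^3 + tx^2 + t^6$, i.e.\ with $a = t$ and $b = t^6$, base-changed from $k_0 = \FF_2(t)$ to its perfect closure $k = \FF_2(t^{1/2^\infty})$. First I would record the easy structural points: $b = t^6$ is a square in $k_0$, the $j$-invariant equals $b^{-1} = t^{-6}\notin\overline{\FF}_2$, and consequently $X := (X_0)_k$ is an ordinary elliptic curve to which Theorem~\ref{T:elliptic curve obstruction} applies. After that, everything reduces to checking the two hypotheses of part (b): that $X(k)$ is torsion, and that none of $t$, $t^6$, $t+t^6$ lies in the image of $\varphi+1$ on $k$.

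For the torsion hypothesis I would invoke the descent bound discussed just before the statement: the \textsc{Magma} computation shows that for $(a,b) = (t,t^6)$ the $\psi$-Selmer group has order $2$ and the $\pi$-Selmer group is trivial (with $\pi\colon X_0 \to X_0'$ the Frobenius isogeny and $\psi$ its dual), so $\#\bigl(X_0(k)/2X_0(k)\bigr) \le 2$; since the $j$-invariant is non-constant, the Mordell--Weil theorem of \cite[Theorem~1.1]{Ghi10} or \cite[Corollary~1.3]{Ros15} applies and makes $X(k) = X_0(k)$ finitely generated, hence of rank $0$, hence equal to its torsion subgroup $\ZZ/2\ZZ = \langle (0, t^3)\rangle$. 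The point needing care is that the Selmer computation is carried out over $k_0$ while we want to control $X(k)$ over the perfect closure $k$; this is harmless because passing to $k$ leaves the absolute Galois group unchanged and can only shrink the relevant cohomology, so the descent bound propagates along the purely inseparable extension $k/k_0$, exactly as used in the discussion preceding the theorem.

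For the Artin--Schreier hypothesis I would work at the place of $k$ over $t=\infty$. Since $k = \bigcup_n \FF_2(t^{1/2^n})$, it suffices to show that for each $n$, none of $t$, $t^6$, $t+t^6$ lies in $(\varphi+1)\FF_2(t^{1/2^n})$; writing $u = t^{1/2^n}$ and embedding $\FF_2(u)\hookrightarrow \FF_2((1/u))$, this follows from the classical description of the Artin--Schreier quotient of $\FF_2((1/u))$ (a basis is given by $1$ together with the monomials $u^{m}$ for $m \ge 1$ odd, and the class of a general $u^{m}$ is computed by repeatedly rewriting $u^{2m'} \equiv u^{m'}$, so it equals the class of $u^{m_{\mathrm{odd}}}$ where $m_{\mathrm{odd}}$ is the odd part of $m$). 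Thus the classes of $t = u^{2^n}$, of $t^6 = u^{6\cdot 2^n}$, and of $t+t^6 = u^{2^n}+u^{6\cdot 2^n}$ are $u^1$, $u^3$, and $u^1+u^3$ respectively, all nonzero. I would also remark that one genuinely needs the place at infinity: at $t=0$ one has $t = u^{2^n}$ with $u$ a uniformizer and $u^m \in (\varphi+1)\FF_2((u))$ for every $m \ge 1$ (take $\sum_{j\ge 0} u^{2^j m}$), so that place imposes no condition at all.

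Putting the pieces together, Theorem~\ref{T:elliptic curve obstruction}(b) yields that $X$ admits no finite separable tame morphism to $\PP^1_k$, and since $p=2$ the field $\FF_2(t^{1/p^\infty})$ is exactly $k$. Essentially all of the substantive machinery — the reduction to the elliptic-curve obstruction $Z_X$, the explicit symbol computation, the descent bound, and the Mordell--Weil input — is already set up before the statement; the one verification genuinely left to carry out is the Artin--Schreier non-membership of $t$, $t^6$, $t+t^6$, and I expect its only real subtlety to be the observation that it must be detected at $u = \infty$ rather than at $t = 0$. The torsion hypothesis, while formally the heaviest (it rests on an explicit Selmer computation and on Mordell--Weil for non-isotrivial elliptic curves over a function field and its perfect closure), is entirely quotable from the preceding discussion.
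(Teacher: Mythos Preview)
Your proposal is correct and follows essentially the same route as the paper: apply Theorem~\ref{T:elliptic curve obstruction}(b) with $a=t$, $b=t^6$, invoke the \textsc{Magma} Selmer computation together with the Mordell--Weil theorem over the perfect closure (\cite{Ghi10}, \cite{Ros15}) to get that $X(k)$ is torsion, and check the Artin--Schreier condition. You actually supply more than the paper does: the paper simply posits ``none of $a,b,a+b$ belongs to the image of $\varphi+1$ on $k$'' as one of the standing hypotheses on the pair $(a,b)$ and then asserts that $(t,t^6)$ satisfies them, whereas you give the explicit verification at the place $u=\infty$ via the standard basis of $\FF_2((1/u))/\wp\FF_2((1/u))$; likewise you flag and justify the passage of the Selmer bound from $k_0$ to its perfect closure, which the paper leaves implicit.
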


Moving to the supersingular case, we obtain the following.
\begin{theorem} \label{T:supersingular elliptic}
Let $X$ be the elliptic curve over $k$ given by the affine model
\[
y^2 + y = x^3 + ax + b
\]
for some parameters $a,b \in k$. (This is the generic form of a supersingular elliptic curve, necessarily having $j$-invariant $0$.) Then all of the SY classes of $X$ vanish; in particular, $X$ admits a tame morphism to $\PP^1_k$.
\end{theorem}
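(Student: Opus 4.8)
The plan is to show that every SY class of $X$ is \emph{already} trivial, by reusing the cohomological d\'evissage from the proof of Theorem~\ref{T:extension to realize tame} and observing that for a supersingular curve over a perfect field each field extension appearing there may be taken to be trivial. (One could also argue by brute force in the spirit of Theorem~\ref{T:elliptic curve obstruction}: choose a pseudotame $g$ built from $y/x$, which is a uniformizer at $\infty$, compute $\SY(f,g)$ explicitly and the resulting family of conic bundles over $\calS_X$, and exhibit a rational point in every fiber; but the cohomological route is cleaner and shows simultaneously that all SY classes coincide, which is the promised corroborating evidence.)

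First I would record the features of the Frobenius twist $X^{(2)}$: being the twist of an elliptic curve it is again an elliptic curve over $k$ (explicitly $y^2+y=x^3+a^4x+b^4$), so it has a $k$-rational point and its Jacobian is $X^{(2)}$ itself. Moreover $j(X^{(2)})=j(X)^4=0$, so $X^{(2)}$ is again supersingular; equivalently $X^{(2)}_{\overline{k}}\cong X_{\overline{k}}$ is supersingular. Hence $J[2]=X^{(2)}[2]$ is a finite connected (infinitesimal) group scheme of order $4$; being self-dual under the Weil pairing and connected, it has no $\mu_2$ subquotient, so it is an extension of $\alpha_2$ by $\alpha_2$.

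Now, exactly as in the proof of Theorem~\ref{T:extension to realize tame}: since $X^{(2)}$ has a $k$-point and $\Br(X^{(2)}_{\overline{k}})=0$ by Tsen, the Leray spectral sequence gives an exact sequence
\[
\Br(k)[2]\to \Br(X^{(2)})[2]\to H^1_{\et}(k,\underline{\Pic}(X^{(2)}))[2],
\]
and by \eqref{eq:Selmer sequence} the last group is a quotient of $H^1_{\fppf}(k,J[2])$. Because $k$ is perfect, the Artin--Schreier-type sequence $0\to\alpha_2\to\GG_a\xrightarrow{F}\GG_a\to 0$ gives $H^1_{\fppf}(k,\alpha_2)=k/k^2=0$, and the extension structure of $J[2]$ then forces $H^1_{\fppf}(k,J[2])=0$; hence $H^1_{\et}(k,\underline{\Pic}(X^{(2)}))[2]=0$ and every SY class of $X$ lies in the image of $\Br(k)[2]$. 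Finally the $k$-rational point of $X^{(2)}$ splits $\Br(k)\to\Br(X^{(2)})$, and by Remark~\ref{R:last quadratic extension} the image in $\Br(k)$ of any SY class is trivial: the associated conic bundle, restricted over the $k$-point, is a conic over the perfect field $k$ that acquires a point over a purely inseparable extension by Remark~\ref{R:double cover construction}, hence over $k$ itself. Combining the two observations, every SY class is $0$. Lemma~\ref{L:SY class to pseudotame} then produces a pseudotame $f\in R_X$, and Lemma~\ref{L:pseudotame to tame} upgrades it to a tame morphism $X\to\PP^1_k$, as claimed.

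The main obstacle is the input underlying the step on $J[2]$: correctly identifying $X^{(2)}[2]$ (using that supersingularity is a geometric condition stable under Frobenius twist) and invoking the vanishing of $H^1_{\fppf}$ for an infinitesimal finite group scheme over a perfect field. Everything else is a direct application of the machinery already established, in particular Remark~\ref{R:last quadratic extension}, which is what lets us dispose of the $\Br(k)$-part without any extension of $k$.
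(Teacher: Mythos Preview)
Your argument is correct and takes a genuinely different route from the paper. The paper proceeds by direct computation: it chooses $g=x$ and $f=x^2y\in\Gamma(y/x^2)$, computes $\SY(f,g)$ explicitly, writes it as a $1$-coboundary to obtain the one-parameter family of SY conics
\[
T_1T_3+T_2^2+(x+c)T_1^2+(x^2+cx)T_3^2 \qquad (c\in k),
\]
and then exhibits the rational point $(c^{1/2}:c^{1/4}:1)$ on each one (here is where perfectness of $k$ is used). Your approach instead recycles the cohomological d\'evissage of Theorem~\ref{T:extension to realize tame}: since $X^{(2)}$ is again a supersingular elliptic curve, $J[2]$ sits in an exact sequence $0\to\alpha_2\to J[2]\to\alpha_2\to 0$, and $H^1_{\fppf}(k,\alpha_2)=k/k^2=0$ over a perfect field forces $H^1_{\fppf}(k,J[2])=0$; hence every SY class comes from $\Br(k)$, where it dies by Remark~\ref{R:last quadratic extension}.

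Each approach has its merits. Yours is shorter, entirely conceptual, and explains transparently \emph{why} supersingularity plus perfectness suffices: the obstruction group $H^1_{\fppf}(k,J[2])$ simply vanishes. It also treats all SY classes uniformly without ever writing down a conic. The paper's computation, by contrast, is constructive: it produces explicit sections of each SY bundle, which in principle could be pushed through Lemma~\ref{L:SY class to pseudotame} and Lemma~\ref{L:pseudotame to tame} to write down an actual tame morphism. It is also self-contained in the sense that it does not invoke Remark~\ref{R:double cover construction} (which underlies Remark~\ref{R:last quadratic extension}). Your parenthetical remark that the brute-force route is available ``in the spirit of Theorem~\ref{T:elliptic curve obstruction}'' is exactly what the paper does, so both proofs coexist nicely.

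One small stylistic point: your closing summary speaks of ``the vanishing of $H^1_{\fppf}$ for an infinitesimal finite group scheme over a perfect field'' as though it were a general principle you are invoking. In the body of the proof you correctly derive this for $J[2]$ from the extension by $\alpha_2$'s and the long exact sequence, so there is no gap; just be aware that the general statement (for arbitrary infinitesimal $G$) requires a bit more, namely a composition series with $\alpha_p$ and $\mu_p$ factors and the corresponding vanishing for each.
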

\begin{proof}
We have 
\[
dy = (x^2 + a)\,dx.
\]
Let $U$ be the complement of $\infty$ in $X$; then $x$ is regular and unramified on $U$.
Let $V$ be the open subset of $X$ on which $y/x^2$ is regular and unramified; then 
$U \cup V = X$.
For $g = x, f = x^2 y$, we calculate by repeated differentiation that
\begin{align*}
f &= (xy + b^{1/2} x)^2 + (x^2 + a^{1/2} x)^2 x  \\
&= (x^2 + a^{1/2} x)^4 + (x)^4 x + (y + b^{1/2} + b^{1/4})^4 x^2 + (a^{1/4})^4 x^3
\end{align*}
and hence
\[
\SY\left(\frac{y}{x^2}, x \right) = \SY(f,g) = \left( \frac{a^{1/4} x + y^2 + b^{1/2} + b}{x(x + a^{1/2})} \right)^2 dx.
\]
By writing
\[
\SY(f,g) = (x^2+a){\, dx} + \frac{a^{1/2} x^2 + y + x^3 + ax}{x^2(x^2+a)} dx,
\]
we express $\SY(f,g)$ as a 1-coboundary. However, since $X$ is not ordinary, this expression is not unique;
we have (here we use the fact that $H^0(X^{(1)}, B) \simeq H^0(X, \omega_X) = k\,dx$)
\[
H^0(X^{(1)}, B) = k x,
\]
and so we get a collection of SY bundles of the form $C_{g, d(x^3+c^2x)}$ for $c^2 \in k$. The corresponding conics are
\[
T_1 T_3 + T_2^2 + (x+c)T_1^2 + (x^2+cx) T_3^2
\]
or equivalently
\[
T_1 T_3 + T_2^2 + xT_1^2 + cx T_3^2;
\]
in the latter form, we may read off the rational point $(T_1:T_2:T_3) = (c^{1/2}:c^{1/4}:1)$.
\end{proof}

\begin{remark}
We did not attempt to explicitly reconstruct tame morphisms
in the setting of either Theorem~\ref{T:elliptic curve obstruction} or 
Theorem~\ref{T:supersingular elliptic}. However, in the latter case, we observe (following \cite[Remark~6.3]{Sch03a}) that when $a=0$, the rational function $y$ defines a (geometrically) cyclic covering of $\PP^1_k$, which is in particular tame.
\end{remark}

\section{Tame Belyi maps}

We now drop our running hypothesis that $p=2$ and formulate the refined tame Belyi theorem in positive characteristic. For parallelism, we first recall the usual Belyi theorem in characteristic zero, as in \cite[\S 2]{Gol12}.

\begin{theorem}[Riemann, Belyi]
Suppose that $p=0$.
\begin{enumerate}
\item[(a)]
If $X$ admits a morphism to $\PP^1_k$ ramified only over $\{0,1,\infty\}$, then $X$ descends to $\overline{\QQ}$.
\item[(b)]
Conversely, suppose that $k$ is algebraic over $\QQ$. Then $X$ admits a morphism to $\PP^1_k$ ramified only over $\{0,1,\infty\}$.
(In particular, this morphism is defined over $k$, not just over $\overline{k}$.)
\end{enumerate}
\end{theorem}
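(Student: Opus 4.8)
The plan is to treat the two assertions separately: part (a) is a rigidity statement about covers of the thrice-punctured projective line, and part (b) is Belyi's explicit two-step construction, which moreover will be visibly defined over $k$.

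For (a), I would fix an embedding $k \hookrightarrow \CC$ and note that $f$ restricts to a connected finite \'etale cover of $\PP^1_\CC \setminus \{0,1,\infty\}$. By the Riemann existence theorem such covers are ``rigid'', i.e.\ classified by discrete data: finite sets equipped with an action of the profinite completion of $\pi_1^{\mathrm{top}}(\PP^1(\CC) \setminus \{0,1,\infty\})$, which is free on two generators. More to the point, since the \'etale fundamental group of $\PP^1 \setminus \{0,1,\infty\}$ over an algebraically closed field of characteristic zero is insensitive to the choice of that field, base change induces an equivalence between finite \'etale covers of $\PP^1_{\overline{\QQ}} \setminus \{0,1,\infty\}$ and of $\PP^1_\CC \setminus \{0,1,\infty\}$. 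Hence $f$ already descends to a cover of $\PP^1_{\overline{\QQ}} \setminus \{0,1,\infty\}$, and taking the smooth compactification shows that $X$ descends to $\overline{\QQ}$.

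For (b), assume $k \subseteq \overline{\QQ}$. First I would use Riemann--Roch to choose any nonconstant $f_0 \colon X \to \PP^1_k$; let $P_0 \subseteq \PP^1(\overline{\QQ})$ be its finite branch locus. Then I would apply \emph{Belyi's first trick} to reduce to the case that the branch locus lies in $\PP^1(\QQ)$: while some branch point has degree $d \geq 2$ over $\QQ$, post-compose the current map with the minimal polynomial $m \in \QQ[x]$ of one Galois orbit of such points, regarded as a morphism $\PP^1 \to \PP^1$ of degree $d$. The branch locus of the composite lies in $m(P) \cup \{\text{critical values of } m\}$; the chosen orbit is carried to $0$, and the critical points of $m$ are the fewer than $d$ roots of $m' \in \QQ[x]$, hence of degree $< d$ over $\QQ$. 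Iterating, organized as a lexicographic induction on the pair (maximal degree over $\QQ$ of a branch point, number of Galois orbits of branch points attaining that degree), makes the branch locus $\QQ$-rational; after a M\"obius transformation over $\QQ$ I may assume it is a finite subset of $\PP^1(\QQ)$ containing $\{0,1,\infty\}$.

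Next comes \emph{Belyi's second trick}: if $\lambda \in \QQ \setminus \{0,1,\infty\}$ is a branch point, then after a $\QQ$-rational M\"obius transformation permuting $\{0,1,\infty\}$ I may assume $\lambda \in (0,1)$; writing $\lambda = a/(a+b)$ with $a,b$ coprime positive integers, post-compose with $\gamma(x) = \tfrac{(a+b)^{a+b}}{a^a b^b}\, x^a (1-x)^b$. One checks that $\gamma$ carries $\{0,1,\infty,\lambda\}$ into $\{0,1,\infty\}$ and has only $0$, $1$, $\infty$ as critical values, so that this post-composition strictly decreases the number of branch points outside $\{0,1,\infty\}$; inducting on that number produces a morphism ramified only over $\{0,1,\infty\}$. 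Since $f_0$ is defined over $k$ and every map used in the two tricks is defined over $\QQ \subseteq k$, the resulting composite $f \colon X \to \PP^1_k$ is defined over $k$, which is the asserted refinement.

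The hard part will be the termination of Belyi's first trick: the critical values of $m$ may themselves be irrational, and the images under $m$ of branch points lying in other Galois orbits of the top degree may again attain that degree, so one must choose the complexity measure on finite reduced $\QQ$-subschemes of $\PP^1$ carefully enough that each post-composition strictly decreases it. The remaining ingredients---Riemann existence and invariance of $\pi_1$ in (a), and the explicit computation of the critical values of $\gamma$ in (b)---I expect to be routine.
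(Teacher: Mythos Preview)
The paper does not give a proof of this theorem: it is stated as background, attributed to Riemann and Belyi, with a reference to \cite[\S 2]{Gol12}. The only argument the paper offers is the remark in the introduction that for $k \subseteq \overline{\QQ}$ and any $f_0 \colon X \to \PP^1_k$, the post-composition $f_1 \colon \PP^1_k \to \PP^1_k$ carrying the branch locus into $\{0,1,\infty\}$ can be taken over $\QQ$ (citing \cite[Theorem~2.2, Theorem~2.4]{Gol12}), hence over $k$.

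Your proposal is correct and is exactly the standard argument that the reference to Goldring unpacks. Part (a) via invariance of the \'etale fundamental group of $\PP^1 \setminus \{0,1,\infty\}$ under extension of algebraically closed fields of characteristic $0$ is the right mechanism; part (b) via Belyi's two tricks, with all post-compositions defined over $\QQ$, is precisely what the paper's introductory remark alludes to. Your termination argument for the first trick is fine: post-composing with the minimal polynomial $m$ of a degree-$d$ orbit kills that orbit, each surviving degree-$d$ orbit maps under $m$ to a single Galois orbit of degree dividing $d$, and the critical values of $m$ have degree at most $d-1$ over $\QQ$ since they lie in the fields generated by roots of $m'$; so the lexicographic pair does strictly decrease. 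The computation for the second trick is likewise correct: $\gamma$ has critical points only at $0,1,\lambda$ (and a pole at $\infty$), with values $0,0,1,\infty$, so the number of branch points outside $\{0,1,\infty\}$ drops by at least one.
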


As noted in the introduction, the direct analogue of Belyi's theorem in positive characteristic fails without a tameness restriction, due to the following fact. (This statement has its origins in Abhyankar's observation that the map $x \mapsto x^p + x^{-1}$
presents $\GG_{m,k}$ as a finite \'etale cover of $\AAA^1_k$.)
\begin{theorem}[Abhyankar, Katz, et al.] \label{T:wild}
Suppose that $p>0$. Then $X$ always admits a morphism to $\PP^1_k$ ramified only over $\infty$.
\end{theorem}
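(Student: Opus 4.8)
The plan is to exhibit, for each $X$, an explicit finite morphism to $\PP^1_k$ whose only ramification occurs above $\infty$, by first reducing to the case $X = \PP^1_k$ and then producing an appropriate rational function. The starting observation is Abhyankar's example: the map $\phi\colon \PP^1_k \to \PP^1_k$ given by $x \mapsto x^p + x^{-1}$ (equivalently $\phi(x) = (x^{p+1}+1)/x$) restricts to a finite \'etale cover $\GG_{m,k} \to \AAA^1_k$, since the derivative of $x^p + x^{-1}$ is $-x^{-2}$, which is nowhere zero on $\GG_m$; one checks directly that $\phi$ is totally ramified at both $x=0$ and $x=\infty$, both of which map to $\infty \in \PP^1_k$. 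Hence $\phi$ is ramified only over $\infty$.

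For general $X$, first choose any finite separable morphism $g\colon X \to \PP^1_k$ (such a $g$ exists, e.g.\ by taking any nonconstant element of $k(X)$ with nonzero derivative, which is possible since $X$ is geometrically irreducible). The branch locus of $g$ is a finite set of closed points $S \subseteq \PP^1_k$; by applying an automorphism of $\PP^1_k$ we may assume $\infty \notin S$ and indeed that $S \subseteq \AAA^1_k$. The goal is then to find a finite morphism $h\colon \PP^1_k \to \PP^1_k$, ramified only over $\infty$, such that $h^{-1}(\{\infty\}) \supseteq S$, i.e.\ $h$ sends every point of $S$ to $\infty$; then $h \circ g$ will be ramified only over $\infty$, as follows: over any point $y \neq \infty$ the map $h$ is \'etale and $h^{-1}(y)$ avoids $S$, so $g$ is \'etale over each such preimage, whence $h\circ g$ is \'etale over $y$. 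To build $h$, one can iterate or adapt Abhyankar's construction: for a single point, precomposing $\phi$ with a translation handles sending that point to $\infty$ while keeping ramification only over $\infty$; to accommodate all of $S$ at once (including points of $S$ defined over extensions of $k$, which must be handled $\Gal$-equivariantly so that $h$ is defined over $k$), take $m(x) \in k[x]$ to be the monic polynomial whose roots are exactly the points of $S$, and consider a rational function such as $x \mapsto m(x)^{p-1} \cdot \big(\text{something}\big) + 1/m(x)$, or more simply the composite of $\phi$ with the finite map $x \mapsto m(x)$; one checks the derivative is a unit away from the zeros of $m$ and that the only poles lie over $\infty$.

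The main obstacle will be arranging a single morphism $h\colon \PP^1_k \to \PP^1_k$, defined over $k$ (not merely $\overline k$), that is simultaneously \'etale everywhere except over $\infty$ \emph{and} carries the entire finite set $S$ into $\{\infty\}$. The Abhyankar trick cleanly kills ramification away from a chosen divisor, but one must verify that composing it with $x \mapsto m(x)$ (or an analogous device) does not introduce new branch points: concretely, if $h = \phi \circ m$ then $h' = \phi'(m(x))\, m'(x)$, and one needs $\phi'(m(x))$ to vanish only where $m(x) = 0$ or $m(x) = \infty$ while controlling the extra zeros coming from $m'(x)$ — these latter zeros $x_0$ satisfy $m'(x_0) = 0$ but need not have $m(x_0) \in \{0,\infty\}$, so a naive composition can fail. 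The fix is to choose the outer map more carefully (for instance using $x \mapsto x^p + x^{-N}$ with $N$ coprime to $p$, or an additive-polynomial variant) so that its derivative is a unit on $\GG_m$ and the composition's critical points all lie over $\infty$; verifying this, and checking the Galois-descent so that $h$ is defined over $k$ rather than $\overline k$, is the technical heart of the argument. Everything else — the reduction to $\PP^1_k$ via a preliminary separable $g$, and the conclusion that $h \circ g$ is ramified only over $\infty$ — is formal.
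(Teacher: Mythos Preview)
The paper does not give a proof of this statement at all; it simply cites Katz \cite{Kat88} for $k$ perfect and Kedlaya \cite{Ked05} for general $k$. Your plan is therefore more detailed than what appears in the paper, and the overall strategy---choose a separable $g\colon X\to\PP^1_k$, then post-compose with a self-map $h$ of $\PP^1_k$ that is \'etale over $\AAA^1_k$ and sends the branch locus $S$ of $g$ to $\infty$---is indeed the standard one underlying those references.

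You have correctly located the genuine obstacle: the naive composite $\phi\circ m$ acquires extra branch points at the zeros of $m'$. Your parenthetical ``additive-polynomial variant'' is in fact the clean resolution, and it deserves to be the main construction rather than an aside. Concretely: let $V\subset\overline{k}$ be the $\FF_p$-linear span of all $\overline{k}$-points of $S$; this is a finite $\FF_p$-vector space stable under $\Gal(\overline{k}/k)$, so the additive polynomial $a(x)=\prod_{v\in V}(x-v)$ lies in $k[x]$. Its derivative is the nonzero constant $\prod_{v\neq 0}(-v)$, so $a\colon\AAA^1_k\to\AAA^1_k$ is finite \'etale, and by construction $a(S)=\{0\}$. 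Then $a\circ g$ has branch locus contained in $\{0,\infty\}$, and one further composition with Abhyankar's $\phi(x)=x^p+x^{-1}$ (which is \'etale over $\AAA^1$ and has $\phi^{-1}(\infty)=\{0,\infty\}$) finishes the job. This completely avoids the $m'(x)=0$ problem, and the Galois-equivariance is automatic. Note, however, that this argument uses that the $\overline{k}$-points of $S$ are reduced, which can fail when $k$ is imperfect; this is why the imperfect case requires the additional work in \cite{Ked05}, and your plan should flag that the argument as written only covers perfect $k$.
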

\begin{proof}
See \cite[Lemma~16]{Kat88} for the case where $k$ is perfect,
and \cite[Theorem~1]{Ked05} for the general case (and a correspoding assertion in higher dimensions).
\end{proof}

To salvage a form of Belyi's theorem in positive characteristic, one must restrict to tame morphisms.
The following result recovers Theorem~\ref{T:tame Belyi}; part (b) was previously known for $k = \overline{\FF}_p$,
by Sa\"\i di for $p>2$ \cite[Th\'eor\`eme~5.6]{Sai97} and Anbar--Tutdere \cite[Theorem~2]{AT18} for $p=2$
(the latter using the work of Sugiyama--Yasuda).
\begin{theorem} \label{T:tame Belyi positive characteristic}
Suppose that $p>0$.
\begin{enumerate}
\item[(a)]
If $X$ admits a tame morphism to $\PP^1_k$ ramified only over $\{0,1,\infty\}$, then $X$ descends to $\overline{\FF}_p$.
\item[(b)]
Conversely, suppose that $k$ is algebraic over $\FF_p$. Then $X$ admits a tame morphism to $\PP^1_k$ ramified only over $\{0,1,\infty\}$.
\end{enumerate}
\end{theorem}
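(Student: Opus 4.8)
The plan is to handle the two parts separately, with part (a) being essentially formal and part (b) being the substantive one that combines Theorem~\ref{T:main1} with the positive-characteristic tame Belyi machinery of Sa\"\i di and Anbar--Tutdere. For part (a), I would argue via Grothendieck's theory of the tame fundamental group: if $f\colon X\to\PP^1_k$ is tame and ramified only over $S=\{0,1,\infty\}$, then $X\setminus f^{-1}(S)$ is a tame cover of $\PP^1_k\setminus S$, whose tame fundamental group (over $\overline{k}$) is topologically finitely generated and independent of the base field in characteristic $p$ — in particular it agrees with the one over $\overline{\FF}_p$. A specialization/rigidity argument (Riemann's half of Belyi, made to work in positive characteristic thanks to tameness, cf.\ the discussion after Theorem~\ref{T:wild}) then shows that the cover, and hence $X$, descends to $\overline{\FF}_p$. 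I would cite \cite[Expos\'e~X]{Gro71} for the finite generation and base-independence, and the usual spreading-out argument to pin down the field of definition.

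For part (b), the strategy is: first produce \emph{some} finite separable tame morphism $X\to\PP^1_k$, and then postcompose with a carefully chosen tame rational map $\PP^1_k\to\PP^1_k$ to push all the branch points into $\{0,1,\infty\}$. The existence of the initial tame morphism is exactly where the new input enters: when $k$ is finite we invoke Theorem~\ref{T:main1}, and when $k$ is an infinite algebraic extension of $\FF_p$ we descend to a finite subfield $k_0\subseteq k$ over which $X$ is defined, apply Theorem~\ref{T:main1} there, and base change back up to $k$ (tameness is preserved under base extension, as recorded in the Notations section). This already resolves the case $p>2$, $k=\overline{\FF}_p$ of Sa\"\i di and the case $p=2$, $k=\overline{\FF}_p$ of Anbar--Tutdere without the algebraically-closed hypothesis, and it is genuinely new for $k$ finite.

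The remaining step is the tame Belyi reduction on $\PP^1$ itself: given a finite separable $f_0\colon X\to\PP^1_k$ with branch locus a finite set $T\subset\PP^1(\overline{k})$, find $f_1\colon\PP^1_k\to\PP^1_k$, tame and defined over $k$, with $f_1\circ f_0$ ramified only over $\{0,1,\infty\}$. Here I would follow the classical Belyi argument in the form of \cite[Theorem~2.2, Theorem~2.4]{Gol12} (invoked in the introduction), but take care that every auxiliary map used is tamely ramified in characteristic $p$ — the point being that $k\supseteq\FF_p$ is algebraic over a finite field, so one has enough rational points and can choose the ``Belyi polynomials'' with degrees prime to $p$, or more robustly one can first enlarge $T$ by a preliminary tame map so that $T\subset\PP^1(\FF_q)$ for some finite $\FF_q\subseteq k$ and then clear points one at a time, at each stage using an explicitly tame map. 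Composition of tame morphisms being tame, and the branch locus of a composite being controlled by Abhyankar's lemma, the final $f=f_1\circ f_0$ is tame and ramified only over $\{0,1,\infty\}$.

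The main obstacle I anticipate is the tameness bookkeeping in the Belyi reduction on $\PP^1$: the standard Belyi polynomials $x\mapsto \frac{(m+n)^{m+n}}{m^m n^n}x^m(1-x)^n$ are wildly ramified as soon as $p\mid m$ or $p\mid n$, so one cannot simply quote the characteristic-zero construction verbatim. The fix is to exploit that over a finite field one has freedom to choose $m,n$ coprime to $p$ while still merging the desired triples of branch points (possibly after an initial tame map to arrange the branch points to be $\FF_q$-rational and to control their number), but verifying that this can always be done — and that the induced moves on branch loci terminate — is the delicate part; this is precisely what Sa\"\i di's \cite[Th\'eor\`eme~5.6]{Sai97} accomplishes over $\overline{\FF}_p$ for $p>2$, and I would adapt his argument (which is insensitive to the exact base field, needing only enough rational points and prime-to-$p$ degrees) to the present setting, citing it and \cite[Theorem~2]{AT18} for the structure while supplying the descent to an arbitrary algebraic extension of $\FF_p$ via Theorem~\ref{T:main1}.
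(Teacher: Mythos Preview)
Your handling of part (a) matches the paper's: both invoke the invariance of the tame fundamental group under change of algebraically closed base field (the paper cites \cite[Expos\'e~XIII, Corollaire~2.12]{Gro71} rather than Expos\'e~X, but the idea is the same).

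For part (b), your plan would work, but you are making it much harder than necessary. Once you have reduced to $k$ finite and obtained a tame $f_0\colon X\to\PP^1_k$ (via Theorem~\ref{T:odd finite} for $p>2$ or Theorem~\ref{T:finite to tame} for $p=2$), the paper dispatches the Belyi reduction on $\PP^1$ in one stroke: choose $q$ a power of $p$ so that every branch point of $f_0$ lies in $\PP^1(\FF_q)$, and postcompose with the single map $f_1\colon x\mapsto x^{q-1}$. Since $\gcd(q-1,p)=1$, $f_1$ is tame, ramified only over $0$ and $\infty$, and sends every point of $\FF_q^\times$ to $1$; hence $f_1\circ f_0$ is tame and ramified only over $\{0,1,\infty\}$. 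This completely sidesteps the ``tameness bookkeeping'' and termination issues you flag as the main obstacle --- there is no need to adapt Sa\"\i di's iterative argument or to worry about choosing Belyi polynomials with degrees prime to $p$. You even anticipate the key preliminary step (arranging the branch locus into $\PP^1(\FF_q)$), but then miss that a single power map finishes the job from there.
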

\begin{proof}
Part (a) follows from Grothendieck-Murre-Raynaud's theory of tame fundamental groups
(see in particular \cite[Expos\'e XIII, Corollaire~2.12]{Gro71}),
which implies that the tame fundamental group of a curve (and hence the category of tame covers) is invariant under base change along extensions of algebraically closed fields.
To prove (b), we may assume at once that $k$ is finite. Apply
Theorem~\ref{T:odd finite} (if $p>2$) or 
Theorem~\ref{T:finite to tame} (if $p=2$) 
to obtain a tame morphism $f_0: X \to \PP^1_k$.
Choose a power $q$ of $p$ such that all of the branch points of $f_0$ in $\PP^1_k$ are defined over $\FF_q$,
then let $f_1: \PP^1_k \to \PP^1_k$ be the map $x \mapsto x^{q-1}$. The composition $f_1 \circ f_0$ is tame and ramified only over $\{0,1,\infty\}$, as desired.
\end{proof}

\end{document}